\pgfplotsset{compat=1.12}
\def\sideremark#1{\ifvmode\leavevmode\fi\vadjust{\vbox to0pt{\vss 
      \hbox to 0pt{\hskip\hsize\hskip1em           
 \vbox{\hsize2cm\tiny\raggedright\pretolerance10000
 \noindent #1\hfill}\hss}\vbox to8pt{\vfil}\vss}}} %
\pgfplotsset{compat=1.12}
\newtheorem{introtheorem}{Theorem}
\newtheorem{introcorollary}[introtheorem]{Corollary}
\newtheorem{introproposition}[introtheorem]{Proposition}
\newtheorem{theorem}{Theorem}[section]
\newtheorem{lemma}[theorem]{Lemma}
\newtheorem{proposition}[theorem]{Proposition}
\newtheorem{corollary}[theorem]{Corollary}
\theoremstyle{definition}
\newtheorem{example}[theorem]{Example}
\theoremstyle{remark}
\newtheorem{remark}[theorem]{Remark}
\newcommand{\NE}{\ensuremath {\operatorname {NE}}}
\newcommand{\NS}{\ensuremath {\operatorname {NS}}}
\newcommand{\Nef}{\ensuremath{\operatorname {Nef}}}
\newcommand{\ord}{\ensuremath{\operatorname {ord}}}
\newcommand{\mult}{\ensuremath{\operatorname {mult}}}
\numberwithin{figure}{section}
\begin{document}

\title[The cone of curves and the Cox ring of rational surfaces]{The cone of curves and the Cox ring of rational surfaces over Hirzebruch surfaces}

\author[Galindo]{Carlos Galindo}

\address{Universitat Jaume I, Campus de Riu Sec, Departamento de Matem\'aticas \& Institut Universitari de Matem\`atiques i Aplicacions de Castell\'o, 12071
Caste\-ll\'on de la Plana, Spain} \email{galindo@uji.es}

\author[Monserrat]{Francisco Monserrat}

\address{Universitat Polit\`ecnica de Val\`encia, Departament de Matem\`atica Aplicada \&  Institut Universitari de Matem\`atica Pura i Aplicada, 46022
València, Spain} \email{framonde@mat.upv.es}

\author[Moreno-\'Avila]{Carlos-Jes\'us Moreno-\'Avila}

\address{Universidad de Extremadura, Escuela Politécnica, Departamento de Matemáticas, 10003, Cáceres, Spain} \email{cjmoravi@unex.es}

\subjclass[2010]{Primary: 14C20; Secondary: 14E15, 14C22, 13A18}
\keywords{Finite generation of the cone of curves; arrowed proximity graph; Mori dream spaces; bounded negativity conjecture}
\thanks{The authors were partially funded by MCIN/AEI/10.13039/501100011033 and by “ERDF A way of
making Europe”, grant PID2022-138906NB-C22, as well as by Universitat Jaume I, grant GACUJIMA-2024-03. The third author was also supported by the Margarita Salas postdoctoral contract MGS/2021/14(UP2021-021) financed by the European Union-NextGenerationEU}

\begin{abstract}
Let $X$ be a rational surface obtained by blowing up at a configuration $\mathcal{C}$ of infinitely near points over a Hirzebruch surface $\mathbb{F}_\delta$. We prove that there exist two positive integers $a \leq b$ such that the cone of curves of $X$ is finite polyhedral and minimally generated when $\delta \geq a$, and the Cox ring of $X$ is finitely generated whenever $\delta \geq b$. The integers $a$ and $b$ depend only on a combinatorial object (a graph decorated with arrows) representing the strict transforms of the exceptional divisors, their intersections and those with the fibers and special section of $\mathbb{F}_\delta$.
\end{abstract}

\maketitle
\section{Introduction}\label{sec:intro}
In the last decades, natural convex cones in the N\'eron-Severi space (and its dual) of algebraic  varieties are playing an important role \cite{Laz1}. Ample, nef, big and semiample cones are some of these cones. In particular, the cone of curves and its clousure in the usual topo\-logy (the Mori cone) are key objects in the minimal model program (\cite{Mori,Kaw,BirCasHac2010I,HacMck2010II,Casc2021}). Within the study of algebraic varieties another interesting tool is the so-called Cox ring. This ring encodes the geometry and the line bundles on a variety \cite{ArzDerHauLaf}. When $X$ is a smooth variety over an algebraically closed field $k$ and a finite set $\{D_i\}_{i=1}^m$ freely generates the Picard group of $X$, its Cox ring (introduced in \cite{Cox}) is defined as
$$
\operatorname{Cox}(X)=\bigoplus_{s_i\in\mathbb{Z}, 1\leq i\leq m} H^0 \left(X,\mathcal{O}_X\left(\sum_{i=1}^m s_iD_i\right)\right).
$$
A Mori dream space is a variety whose Cox ring is finitely generated as $k$-algebra. The behaviour of these spaces is optimum in the minimal model program \cite{HuKe}. Cox rings are rather studied in the last years \cite{LafVel,HauSub,Bro,HauKeiLaf,ArzBraHauWro}. Although the minimal model program essentially considers higher dimensional varieties, the above mentioned tools, cone of curves and Cox ring of smooth (projective) surfaces, are not well understood. In this paper, we focus on surfaces. To classify surfaces $X$ whose cone of curves $\operatorname{NE}(X)$ is (finite)  polyhedral is an open problem and the same happens with the property of having finitely generated Cox ring. Notice that $\operatorname{Cox}(X)$ finitely generated implies the polyhedrality of $\operatorname{NE}(X)$ and these two properties are equivalent for $K3$ surfaces \cite{ArtHauLaf}. This last result and some others on surfaces use the characterization of finite generation of Cox rings and some applications stated in Theorem 1 and Corollaries 1 and 2 of \cite{GalMonCox}.

We are mainly interested in rational surfaces $X$. By \cite{GalMonCox} it is known that $X$ is a Mori dream space when $K_X^2>0$ or $K_X^2=0$ and $-K_X$ not nef, $K_X$ being a canonical divisor of $X$. These results are included in the more general case, proved in \cite{TesVarVel}, which states that any rational surface such that $-K_X$ is big is a Mori dream space. Rational surfaces can be obtained by blowing up at a configuration of infinitely near points over the projective plane $\mathbb{P}^2$ or a Hirzebruch surface $\mathbb{F}_\delta$. The recent literature contains some families of Mori dream spaces obtained by careful choices of configurations over Hirzebruch surfaces \cite{TesVarVel,mus,FriasLahy2,RosaFrisMus2023} or of points in general position in these surfaces \cite{FriasLahy1}. Something similar happens to the polyhedrality of the cone of curves. Apart from the cases when the Cox ring is finitely generated, some numerical conditions (implying the polyhedrality of the cone of curves) are given in \cite{GalMon1,GalMon2,GalMonCox,GalMon}. The cone of curves of a rational surface is not always polyhedral \cite[Example 4.3]{CamGonz} and particular interest has the case when one blows up more than 9 very general points in the projective plane. In this case, the geometry of the nef and Mori cones is related to the Nagata conjecture \cite{Nag} (see \cite[Conjecture 2.2]{RoeSup}, and  \cite{GalMonMoy,GalMonMorMoy} for a valuative statement of the conjecture).

In this paper we prove that, if $X$ is a rational surface obtained by blowing up at a configutation of infinitely near points over a Hirzebruch surface $\mathbb{F}_\delta$, then there exist two positive integers $b\geq a$  such that, when $\delta\geq b$, $X$ is a Mori dream space and, when $\delta \geq a,$ the cone of curves $\operatorname{NE}(X)$ is polyhedral and minimally generated. To prove that $X$ is a Mori dream space, we use the before mentioned fact that it holds whenever $-K_X$ is big; no information is given about the structure of the Cox ring of $X$.

Now we state the specific result but, previously, we need some notation. Assume that $\pi:X\to\mathbb{F}_\delta$ is the composition of point blowups at a configuration of infinitely near points over $\mathbb{F}_\delta$ giving rise to the surface $X$. The topology of the exceptional curves on $X$ can be represented by the proximity graph (or equivalently the dual graph) of $\pi$. The arrowed proximity graph of $X$, APG$(X),$ is the proximity graph together with some arrows that show those exceptional divisors to which the strict transforms of the fibers (going through blown up points in $\mathbb{F}_\delta$) and the special section are transversal (see page  \pageref{apgg}). Then our main result is:

\begin{introtheorem}[Theorem \ref{Thm:Cone_equiv_conds} in the paper]\label{thm_intro_1}
Let $X$ be a rational surface obtained by a composition of point blowups $\pi:X\to\mathbb{F}_\delta$ over a Hirzebruch surface $\mathbb{F}_\delta$. There exist two positive integers $a(\operatorname{APG}(X,\pi))\leq b(\operatorname{APG}(X,\pi))$, depending only on the arrowed proximity graph $\operatorname{APG}(X,\pi)$, such that:
\begin{itemize}
\item[1)] If $\delta\geq a(\operatorname{APG}(X,\pi))$, then:
\begin{itemize}
\item[(a)] The cone of curves $\operatorname{NE}(X)$ is finite polyhedral and its extremal rays are given by the classes of the strict transforms on $X$ of the fibers on $\mathbb{F}_\delta$ going through the blown up points in $\mathbb{F}_\delta$, the special section, and the exceptional divisors of $\pi$. In this case we say that $\operatorname{NE}(X)$ is minimally generated.
\item[(b)] The nef cone $\operatorname{Nef}(X)$ is generated by the classes of the specific divisors shown in Lemma \ref{lema:Dual_cone_S}.
\end{itemize}
\item[2)] If $\delta\geq b(\operatorname{APG}(X,\pi))$, then $X$ is a Mori dream space.
\end{itemize}
\end{introtheorem}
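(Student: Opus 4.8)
The plan is to exhibit finite generating sets for $\NE(X)$ and $\Nef(X)$ and then to show, by a bound uniform in $\delta$, that no other curve interferes. Write $\pi$ as a composition of $n$ point blowups, let $E_1^{*},\dots,E_n^{*}$ and $\tilde E_1,\dots,\tilde E_n$ be the total and strict transforms of the exceptional divisors, let $F^{(1)},\dots,F^{(t)}$ be the fibers of $\mathbb F_\delta\to\mathbb P^1$ meeting the base of the configuration $\mathcal C$ (with strict transforms $\tilde F^{(1)},\dots,\tilde F^{(t)}$ on $X$), and let $\tilde C_0$ be the strict transform of the special section. Let $\sigma\subseteq\NE(X)$ be the cone generated by $\tilde C_0,\tilde F^{(1)},\dots,\tilde F^{(t)},\tilde E_1,\dots,\tilde E_n$. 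Each of these is an irreducible curve of negative self-intersection (for the $\tilde F^{(j)}$ because each passes through at least one blown up point of $\mathbb F_\delta$), hence each spans an extremal ray of $\overline{\NE}(X)$; therefore it suffices to prove $\NE(X)=\sigma$, which is then finite polyhedral with exactly these extremal rays. All the relevant intersection numbers — among these curves, and of $-K_X$ with them — are read from $\operatorname{APG}(X,\pi)$ and depend on $\delta$ only affinely (through $\tilde C_0^{2}=-\delta-\#\{\text{points of }\mathcal C\text{ on }C_0\}$), which is why the thresholds $a,b$ can be extracted from the graph.

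\emph{Step 1 (the crux).} There is a constant $a_1(\operatorname{APG})$ such that for $\delta\ge a_1$ every irreducible curve $C\subset X$ with $C^{2}<0$ belongs to the list. If $C$ is not exceptional and is not $\tilde C_0$ or any $\tilde F^{(j)}$, then $C=\pi^{*}\bar C-\sum_i m_iE_i^{*}$ with $\bar C\sim aC_0+bF$ irreducible, $\bar C\ne C_0$ and not a fibre; irreducibility forces $a\ge 1$ and $\bar C\cdot C_0=b-a\delta\ge 0$, so $\bar C^{2}=a(2b-a\delta)\ge a^{2}\delta$. On the other hand the proximity (Noether) inequalities among the $m_i$, together with $\sum_i m_i\, m_i(F^{(j)})\le \bar C\cdot F^{(j)}=a$ for every $j$ and the analogous estimate with $C_0$, yield $\sum_i m_i\le \kappa\, a$ for a constant $\kappa=\kappa(\operatorname{APG})$ independent of $\delta$, whence $\sum_i m_i^{2}\le\kappa^{2}a^{2}$ and $C^{2}=\bar C^{2}-\sum_i m_i^{2}\ge a^{2}(\delta-\kappa^{2})\ge 0$ as soon as $\delta\ge\kappa^{2}=:a_1$. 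So, for $\delta\ge a_1$, every irreducible curve off the list satisfies $C^{2}\ge 0$ and is therefore nef.

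\emph{Step 2 (duality, effectivity, and part 1).} Computing the dual of the polyhedral cone $\sigma$ from its intersection matrix yields explicit generators $D_1,\dots,D_s$ of $\sigma^{\vee}$, namely the divisors of Lemma~\ref{lema:Dual_cone_S}, each of the form $\pi^{*}G-\sum_i n_iE_i^{*}$ with $G$ nef on $\mathbb F_\delta$. Since $h^{0}(\mathbb F_\delta,G)$ grows linearly in $\delta$ while $\mathcal C$ imposes a number of linear conditions bounded in terms of $\operatorname{APG}$, there is $a_2(\operatorname{APG})$ with each $D_\ell$ effective for $\delta\ge a_2$. Put $a:=\max(a_1,a_2)$. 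For $\delta\ge a$, each $D_\ell$ meets every curve of the list non-negatively (as $D_\ell\in\sigma^{\vee}$) and meets every other irreducible curve $C'$ non-negatively, since $C'$ is nef by Step 1 and $D_\ell$ is effective; hence each $D_\ell$ is nef and $\sigma^{\vee}\subseteq\Nef(X)$. Combined with the reverse inclusion $\Nef(X)\subseteq\sigma^{\vee}$ (from $\sigma\subseteq\NE(X)$) this gives $\Nef(X)=\sigma^{\vee}$; dualizing and using that $\sigma$ is a closed convex cone of effective classes, $\NE(X)=\overline{\NE}(X)=\sigma$. This proves part 1, with the generators of $\Nef(X)$ as in Lemma~\ref{lema:Dual_cone_S}.

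\emph{Step 3 (Mori dream space) and the main obstacle.} From $-K_{\mathbb F_\delta}=2C_0+(\delta+2)F$ and $K_X=\pi^{*}K_{\mathbb F_\delta}+\sum_iE_i^{*}$ one gets $-K_X=2\pi^{*}C_0+(\delta+2)\pi^{*}F-\sum_iE_i^{*}$. Writing $(\delta+2)\pi^{*}F\equiv\sum_j c_j\,\pi^{*}F^{(j)}$ with $c_j\ge 0$ and $\sum_j c_j=\delta+2$, and using that every $E_i^{*}$ occurs with positive coefficient in the total transform of some $F^{(j)}$, one checks that once $\delta$ is large enough (a bound $a_3(\operatorname{APG})$ essentially of size $3t$) all the $c_j$ can be chosen large enough that $-K_X$ becomes a strictly positive combination of $\tilde C_0$, the $\tilde F^{(j)}$ and the $E_i^{*}$; expanding each $E_i^{*}$ in the basis of strict transforms then exhibits $-K_X$ as a strictly positive combination of all the extremal generators of $\NE(X)$. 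Hence, for $\delta\ge b:=\max(a,a_3)$, $-K_X\in\operatorname{int}\NE(X)=\operatorname{int}\overline{\Eff}(X)$, so $-K_X$ is big and $X$ is a Mori dream space by \cite{TesVarVel}. The real difficulty lies in Step 1: getting the bound $\sum_i m_i\le\kappa\,a$ with $\kappa$ independent of $\delta$, so that the growth $\bar C^{2}\gtrsim a^{2}\delta$ beats $\sum_i m_i^{2}$; the rest is bookkeeping with the intersection matrix encoded in $\operatorname{APG}(X,\pi)$, together with the standard facts that an irreducible curve of non-negative (resp. negative) self-intersection is nef (resp. spans an extremal ray), that $\overline{\NE}(X)=\overline{\Eff}(X)$ on a surface, and the bigness criterion of \cite{TesVarVel}.
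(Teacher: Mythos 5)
Your Steps 1--2 give a correct proof of part 1), but by a genuinely different route from the paper. You classify the irreducible curves of negative self-intersection directly (via $m_{\text{origin}}\le \bar C\cdot F^{(j)}=a$ propagated by the proximity inequalities, so $\sum_i m_i\le Na$ and $C^2\ge a^2(\delta-\kappa^2)$) and then need the dual generators to be \emph{effective} to conclude they are nef. The paper never classifies negative curves nor proves effectivity: it uses that the cone $Q(X)=\{[D]\,:\,[D]^2\ge 0,\ [H]\cdot[D]\ge 0\}$ sits inside $\overline{\NE}(X)$, defines $a(\operatorname{APG}(X,\pi))$ as the least $\delta$ making all generators of $\mathfrak{C}^\vee(X)$ have nonnegative self-intersection (so that $\mathfrak{C}^\vee(X)\subseteq Q(X)\subseteq(\mathfrak{C}^\vee(X))^\vee=\mathfrak{C}(X)$), and shows $\overline{\NE}(X)=\mathfrak{C}(X)+\mathfrak{C}^\vee(X)$ by the observation that any irreducible curve off the list meets every listed curve nonnegatively. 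Your argument buys a more geometric picture (all non-listed curves become nef), but at the price of larger, non-explicit thresholds: the value $a$ your proof produces is $\max(\kappa^2, a_2)$ rather than the self-intersection threshold that feeds the explicit formulae of Corollaries \ref{cor:delta0} and \ref{cor:value_b}. For the bare existence statement of Theorem \ref{thm_intro_1} this is acceptable.

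Step 3 has a genuine gap. The coefficient of $E_i^*$ in $\pi^*F^{(j)}$ is the multiplicity at $p_i$ of the successive strict transforms of $F^{(j)}$, which is \emph{zero} as soon as $p_i$ does not lie on the strict transform of $F^{(j)}$; so it is false that every $E_i^*$ occurs with positive coefficient in the total transform of some fiber. For a point $p_i$ lying on no $\tilde F^{(j)}$ and not on $\tilde M_0$ (e.g.\ $p_{1\,2}$ in Example \ref{New}), the $E_i^*$-coefficient of $-K_X=2\pi^*M_0+(\delta+2)\pi^*F-\sum_l E_l^*$ is exactly $-1$ no matter how you distribute the $c_j$, so $-K_X$ is never a positive combination at the level of total transforms; the ``expansion in strict transforms'' you defer to is precisely where the work lies, and whether it produces positive coefficients depends on $\delta$ through the multiplicity data of the configuration, not through $t$. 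Your claimed bound $a_3\approx 3t$ is also wrong: a single chain of $N$ free points leaving its fiber after the first point has $t=1$ but needs $\delta$ of order $N$ (cf.\ Corollary \ref{cor:value_b}(b) and Example \ref{ex:a_b_different}, where $t=1$ and $b=3>a=1$). The repair is easy once part 1) is in place: $-K_X$ is big iff it lies in the interior of $\overline{\NE}(X)=\overline{\Eff}(X)$, iff $-K_X\cdot D_\ell>0$ for each of the finitely many generators $D_\ell=aF^*+bM^*-\sum_l c_lE_l^*$ of $\Nef(X)$; each such product equals $2a+b(2+\delta)-\sum_l c_l$, which is increasing in $\delta$ with positive slope $b$ (and equals $2$ for $F^*$), so a threshold $b'(\operatorname{APG}(X,\pi))$ exists and [TesVarVel] applies. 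This is exactly Proposition \ref{prop:K_Xposi_inter} in the paper.
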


For each specific surface given by $\pi$, the values $a(\operatorname{APG}(X,\pi))$ and $ b(\operatorname{APG}(X,\pi))$ introduced in Theorem \ref{thm_intro_1} can be obtained in an algorithmic way, see Remarks \ref{GetA} and \ref{GetBp}. However, explicit formulae can be given when the confi\-guration $\mathcal{C}$ of infinitely near points given by $\pi$ contains either only free points or it is a constellation (that is, it comes from blowing up at a unique proper point $p$ in $\mathbb{F}_\delta$ and finitely many infinitely near points), see  Corollaries \ref{cor:delta0} and \ref{cor:value_b}. These corollaries give formulae for $a(\operatorname{APG}(X,\pi))$ and a value $b'(\operatorname{APG}(X,\pi))$ which determines the value $b(\operatorname{APG}(X,\pi)):=\max\{a(\operatorname{APG}(X,\pi)),$ $b'(\operatorname{APG}(X,\pi))\}$. Note that Remark \ref{GetBp} actually determines $b'(\operatorname{APG}(X,\pi))$.

A configuration can be expressed as a union of chains of infinitely near points corresponding to divisorial valuations $\nu_j,1\leq j\leq m$. Denote by $\mathcal{C}_{\nu_j}=\{p_{j\, k}\}_{k=1}^{n_j}$ the chain of $\nu_j$. Write $\varphi_{j\,n_j}$ (respectively, $\varphi_{F_j},\varphi_{M_0}$) an analytically irreducible germ at $p_{j\,1}$ such that its strict transform is transversal to the exceptional divisor $E_{p_{j\,n_j}}$ at a general point of the exceptional locus (respectively, the germ at $p_{j\,1}$ of the fiber $F_j$ of $\mathbb{F}_\delta$ going through $p_{j\,1}$, the germ at $p_{j\,1}$ of the special section $M_0$ of $\mathbb{F}_\delta$). Consider the following intersection numbers:  $a_{j\,n_j}:=(\varphi_{M_0},\varphi_{j\,n_j})_{p_{j\,1}}$ and $b_{j\,n_j}:=(\varphi_{F_j},\varphi_{j\,n_j})_{p_{j\,1}}$. Then, the following result groups the main statements of the above corollaries.
\begin{introcorollary}
Let $X$ be a surface as in Theorem \ref{thm_intro_1} and $\mathcal{C}$ the configuration of infinitely near points defining $\pi.$
\begin{itemize}
\item[(i)] Assume that $\mathcal{C}$ has only free points. Suppose also that, for any divisorial valuation $\nu_j,1\leq j \leq m$, $\nu_j(\varphi_{M_0})=0$ and $\nu_j(\varphi_{F_j})=1$. Then
$$
    a(\operatorname{APG}(X,\pi))=\max\Bigg\lbrace\sum_{h=1}^s \# (\mathcal{C}_{\nu_{j_h}}) \ \bigg| \  {\footnotesize \begin{array}{l}
       \text{$\big\lbrace \mathcal{C}_{\nu_{j_h}} \big\rbrace_{h=1}^s$ is a set of pairwise}\\ \text{disjoint chains whose origins} \\ \text{belong to different fibers}\\
    \end{array}}\Bigg\rbrace,
    $$
  where $\# (\mathcal{C}_{\nu_{j_h}})$ stands for the cardinality of the chain $\mathcal{C}_{\nu_{j_h}}$, and

    $$b'(\operatorname{APG}(X,\pi))=\max\Bigg\lbrace \Bigg\lceil\sum_{h=1}^s \# (\mathcal{C}_{\nu_{j_h}})-2\Bigg\rceil^+ \ \bigg| \  {\footnotesize \begin{array}{l}
       \text{$\big\lbrace \mathcal{C}_{\nu_{j_h}} \big\rbrace_{h=1}^s$ is a set of pairwise}\\ \text{disjoint chains whose origins} \\ \text{belong to different fibers}\\
    \end{array}}\Bigg\rbrace,$$
    where  $\lceil x \rceil^+$ is defined as the ceil of a rational number $x$ if $x\geq 0$, and $0$ otherwise.

\item[(ii)] Assume that $\mathcal{C}$ is a constellation. Then
$$
a(\operatorname{APG}(X,\pi))=\max\Bigg\lbrace \Bigg\lceil\dfrac{\sum_{\lambda=1}^{n_{j}}\mult_{p_{j\,\lambda}}(\varphi_{j\,n_{j}})^2-2a_{j\,n_j}\,b_{j\,n_j}}{b_{j\,n_j}^2}\Bigg\rceil^* \ \bigg| \  1\leq j\leq m
\Bigg\rbrace \text{ and}
$$
$$b'(\operatorname{APG}(X,\pi))=\max\Bigg\lbrace \Bigg\lceil\dfrac{\sum_{\lambda=1}^{n_{j}}\mult_{p_{j\,\lambda}}(\varphi_{j\,n_{j}})-2(a_{j\,n_j}+b_{j\,n_j})}{b_{j\,n_j}}\Bigg\rceil^+ \ \bigg| \  1\leq j\leq m
\Bigg\rbrace,
$$
where $\lceil x \rceil^*$ is defined as the minimum positive integer upper bound of $\{x\}$.
\end{itemize}
\end{introcorollary}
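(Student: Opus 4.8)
The plan is to unwind the definitions of $a(\operatorname{APG}(X,\pi))$ and $b'(\operatorname{APG}(X,\pi))$ supplied by Remarks \ref{GetA} and \ref{GetBp} and evaluate them in closed form under the two hypotheses. Recall that $a(\operatorname{APG}(X,\pi))$ is, by construction, the least $\delta$ for which the cone $\operatorname{cone}(S)$ generated by the candidate extremal classes $S$ (the strict transforms of the fibers through blown-up points, the strict transform of the special section, and the exceptional components of $\pi$) already equals $\operatorname{NE}(X)$ and is minimally generated; since $\operatorname{cone}(S)\subseteq\operatorname{NE}(X)$ always holds, I expect this to be governed by a finite family of numerical inequalities, one for each ``potentially new'' irreducible class not proportional to an element of $S$, so that $a(\operatorname{APG}(X,\pi))$ is the maximum over that family of the least admissible $\delta$. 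Similarly I would read $b'(\operatorname{APG}(X,\pi))$ off the analogous family of inequalities forcing $-K_X$ to be big (whence $X$ is a Mori dream space by \cite{TesVarVel}); since bigness of $-K_X$ is tested through $-K_X\cdot C$ rather than through self-intersections, I expect the $b'$-inequalities to be linear where the $a$-inequalities are quadratic, which would already account for the shapes of the two formulae. So the task becomes: under each hypothesis, identify the binding inequality and the sub-configuration that realises the maximum.

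For part (i) I would first use that, when every point of $\mathcal C$ is free, the proximity graph carries no proximity edges beyond the chains, so $\mult_{p_{j\,\lambda}}(\varphi_{j\,n_j})=1$ for every $\lambda$ and hence $\sum_\lambda\mult_{p_{j\,\lambda}}(\varphi_{j\,n_j})^2=\#(\mathcal C_{\nu_j})$; the hypotheses $\nu_j(\varphi_{M_0})=0$ and $\nu_j(\varphi_{F_j})=1$ should pin down the arrows of $\operatorname{APG}(X,\pi)$ (the section meets no exceptional component and each fiber arrow hits $E_{p_{j\,n_j}}$ transversally). I then expect the potentially new classes that must be dominated to be strict transforms of effective divisors built from a subfamily of the $\widetilde{F_{j_h}}$ together with the chains over their origins; two origins on a common fiber should already let such a class split inside $\operatorname{cone}(S)$, so only subfamilies with origins on pairwise distinct fibers matter, and for such a subfamily the least admissible $\delta$ should come out as $\sum_{h=1}^s\#(\mathcal C_{\nu_{j_h}})$ (respectively $\big\lceil\sum_{h=1}^s\#(\mathcal C_{\nu_{j_h}})-2\big\rceil^+$ for the $-K_X$-bigness version). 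A monotonicity step -- enlarging a chain, or adjoining a whole chain over an unused fiber, should never lower the bound -- would then show the maximum is attained on a maximal family of pairwise disjoint full chains with origins on distinct fibers, which is the claimed formula.

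For part (ii) I would use that a constellation has a single proper point, so all chains share that point and hence the fiber and section through it; ``origins on distinct fibers'' can then hold for at most one chain, and the outer maximum should collapse to a maximum over $1\le j\le m$. Fixing $j$ and a curvette $\varphi_{j\,n_j}$ of $E_{p_{j\,n_j}}$, whose germ on $\mathbb F_\delta$ is cut by a curve numerically equivalent to $a_{j\,n_j}M_0+b_{j\,n_j}F$, its strict transform $\widetilde C_j$ on $X$ has class $a_{j\,n_j}\pi^*M_0+b_{j\,n_j}\pi^*F-\sum_{\lambda=1}^{n_j}\mult_{p_{j\,\lambda}}(\varphi_{j\,n_j})E_{p_{j\,\lambda}}^*$ (with $E^*$ the total transforms), and from $(\pi^*M_0)^2=-\delta$, $\pi^*M_0\cdot\pi^*F=1$, $(\pi^*F)^2=0$, $\pi^*(-)\cdot E_\lambda^*=0$, $(E_\lambda^*)^2=-1$ I get
\[
\widetilde C_j^{\,2}=-\delta\,a_{j\,n_j}^2+2\,a_{j\,n_j}b_{j\,n_j}-\sum_{\lambda=1}^{n_j}\mult_{p_{j\,\lambda}}(\varphi_{j\,n_j})^2,
\]
and, via the Noether-type identity for $\nu_j$, a companion expression linear in the multiplicities for $-K_X\cdot\widetilde C_j$. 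The numerical condition attached to $\nu_j$ should be precisely that these reach the threshold keeping $\widetilde C_j$ inside $\operatorname{cone}(S)$ (resp.\ keeping $-K_X$ big), which after clearing $b_{j\,n_j}^2$ (resp.\ $b_{j\,n_j}$) is the inequality appearing inside the maxima in the statement; rounding with $\lceil\cdot\rceil^*$ to the least admissible positive integer (resp.\ $\lceil\cdot\rceil^+$ for the linear, non-strict one) and taking the maximum over $j$ should give the formulae, the degenerate cases $b_{j\,n_j}=0$ or non-positive numerator being exactly what the ceiling conventions are designed to absorb.

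The computations here are routine; the part I expect to be delicate is matching the optimisation hidden in Remarks \ref{GetA}--\ref{GetBp} with the explicit maxima. In (i) that means proving the optimum is realised by a disjoint union of full chains over distinct fibers and not by some partial or mixed configuration -- a monotonicity/convexity argument -- and in (ii) it means correctly converting the three contact numbers $\big(\sum_\lambda\mult_{p_{j\,\lambda}}(\varphi_{j\,n_j})^2,\ a_{j\,n_j},\ b_{j\,n_j}\big)$ into the self-intersection of the relevant strict transform on the blown-up $\mathbb F_\delta$, identifying which candidate extremal ray gives the binding inequality, and checking that the $\lceil\cdot\rceil^+$ and $\lceil\cdot\rceil^*$ conventions really capture the vacuous cases. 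I would also double-check consistency of the two formulae on their overlap (a chain of free points forming a constellation), where (ii) with $a_{j\,n_j}=0$ and $b_{j\,n_j}=1$ should reproduce (i).
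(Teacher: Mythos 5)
Your overall strategy --- reading $a(\operatorname{APG}(X,\pi))$ and $b'(\operatorname{APG}(X,\pi))$ off Remarks \ref{GetA} and \ref{GetBp} and evaluating the maxima under each hypothesis --- is exactly the paper's strategy, and your treatment of part (i) (all multiplicities equal to $1$, coefficients $a=0$ and $b=1$, maximum realised by full disjoint chains with origins on distinct fibers) matches the proofs of Corollaries \ref{cor:delta0}(b) and \ref{cor:value_b}(b). Part (ii), however, has two problems. First, your self-intersection computation is wrong: the generators of $\mathfrak{C}^\vee(X)$ are written in the basis $F^*,M^*$ with $M^2=\delta$ (not $M_0^2=-\delta$), namely $\Lambda_{j\,n_j}=a_{j\,n_j}F^*+b_{j\,n_j}M^*-\sum_\lambda \mult_{p_{j\,\lambda}}(\varphi_{j\,n_j})E_{j\,\lambda}^*$, so that $\Lambda_{j\,n_j}^2=2a_{j\,n_j}b_{j\,n_j}+b_{j\,n_j}^2\delta-\sum_\lambda\mult_{p_{j\,\lambda}}(\varphi_{j\,n_j})^2$. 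Your expression $-\delta a_{j\,n_j}^2+2a_{j\,n_j}b_{j\,n_j}-\sum_\lambda\mult_{p_{j\,\lambda}}(\varphi_{j\,n_j})^2$ comes from writing the class as $a_{j\,n_j}M_0+b_{j\,n_j}F$; besides being incompatible with $(\varphi_{F_j},\varphi_{j\,n_j})_{p_{j\,1}}=b_{j\,n_j}$, it makes the self-intersection \emph{decrease} with $\delta$, which would turn the condition into an upper bound on $\delta$ and is inconsistent with the denominator $b_{j\,n_j}^2$ in the formula you are trying to prove.

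Second, and this is the substantive gap: the maxima in Remarks \ref{GetA} and \ref{GetBp} run over \emph{all} the divisors $\Lambda_{j\,k}$, $1\le k\le n_j$, whereas the claimed formulae in (ii) involve only the final ones $\Lambda_{j\,n_j}$. You flag ``identifying which candidate extremal ray gives the binding inequality'' as the delicate point but offer no argument. For a constellation with satellite points the multiplicity sequence of $\varphi_{j\,k}$ for $k<n_j$ is not a truncation of that of $\varphi_{j\,n_j}$, so no easy monotonicity applies. The paper invokes \cite[Lemma 3.4]{GalMonMor} for the self-intersections and proves Lemma \ref{lemma_anticanonical_div} for the anticanonical products --- a genuine computation with maximal contact values, split into the cases $p_n$ free or satellite and the subcases $\hat g=g$ and $\hat g=g-1$ --- to show that $\Lambda_{j\,n_j}\cdot(-K_X)>0$ forces $\Lambda_{j\,k}\cdot(-K_X)>0$ for every $k$. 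Without a substitute for these two reductions, the formulae in (ii) are not established.
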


The bounded negativity conjecture states that every surface $X$ in characteristic $0$ has bounded negativity and it has attracted a lot of attention in the last years  \cite{bauer2013,bauer2015,Harb2,Rou,Pok-Roe,GalMonMorPer}. It means that there exists a nonnegative integer $\alpha(X)$ depending only on $X$ such that $C^2\geq - \alpha(X)$ for any integral curve $C$ on $X$. Theorem \ref{thm_intro_1} proves, in particular, that most rational surfaces whose relatively minimal model is a Hirzebruch surface have bounded negativity. It is a consequence of the fact that those surfaces whose cone of curves is finite polyhedral have bounded negativity. Let us state the specific result.
\begin{introproposition}
\label{pr_intro}
Let $X$ be a rational surface obtained by a composition of point blowups $\pi:X\to\mathbb{F}_\delta$. Assume that $\delta \geq a(\operatorname{APG}(X,\pi))$, where $a(\operatorname{APG}(X,\pi))$ is the value introduced in Theorem \ref{thm_intro_1}. Then, $X$ has bounded negativity.
\end{introproposition}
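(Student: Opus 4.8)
The plan is to derive bounded negativity as a formal consequence of the polyhedrality statement in Theorem~\ref{thm_intro_1}, using only the elementary fact that two distinct integral curves on a smooth projective surface intersect nonnegatively.

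First I would apply Theorem~\ref{thm_intro_1}(1)(a): since $\delta\geq a(\operatorname{APG}(X,\pi))$, the cone of curves $\operatorname{NE}(X)$ is finite polyhedral and minimally generated, so there is a finite collection $C_1,\dots,C_r$ of integral curves on $X$ — the strict transforms of the blown-up fibers of $\mathbb{F}_\delta$, the strict transform of the special section, and the exceptional divisors of $\pi$ — whose numerical classes generate $\operatorname{NE}(X)$ as a convex cone (in particular $\operatorname{NE}(X)$ is closed). I would then set $\alpha(X):=\max\{\,0,\,-C_1^2,\dots,-C_r^2\,\}$, a nonnegative integer that depends only on $X$ (indeed only on $\operatorname{APG}(X,\pi)$ and $\delta$).

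Next, let $C$ be an arbitrary integral curve on $X$. If $C^2\geq 0$ there is nothing to check, so assume $C^2<0$. Since $[C]\in\operatorname{NE}(X)$, we may write $[C]=\sum_{i=1}^r\lambda_i[C_i]$ with all $\lambda_i\geq 0$; intersecting with $C$ gives $C^2=\sum_{i=1}^r\lambda_i\,(C\cdot C_i)<0$, so $C\cdot C_i<0$ for at least one index $i$. As distinct integral curves on a smooth projective surface meet nonnegatively, this forces $C=C_i$, whence $C^2=C_i^2\geq -\alpha(X)$. Since $C$ was arbitrary, $X$ has bounded negativity.

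I do not expect a genuine obstacle here: all the substance is already contained in Theorem~\ref{thm_intro_1}, which provides both the finiteness of the generating set and the fact that its members are honest integral curves — precisely what is needed for the ``distinct irreducible curves intersect nonnegatively'' step to apply verbatim. The only point worth stating explicitly is that the resulting bound $\alpha(X)$ is independent of the chosen curve $C$, which is immediate from the finiteness of $\{C_1,\dots,C_r\}$; in fact the argument shows that $X$ carries at most $r$ curves of negative self-intersection, all of them among the generators of $\operatorname{NE}(X)$.
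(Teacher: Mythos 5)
Your argument is correct and is exactly the route the paper takes: the paper derives Proposition \ref{pr_intro} from Theorem \ref{thm_intro_1} via the observation that a surface whose cone of curves is finite polyhedral (generated by classes of integral curves) has bounded negativity, which is precisely the ``negative curve must be one of the finitely many generators'' argument you spell out. No gap.
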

Proposition \ref{pr_intro} shows that each proximity graph $\operatorname{PG}$ determines a positive integer $a(\operatorname{PG})$ such that the rational surfaces $X$ over a Hirzebruch surface $\mathbb{F}_\delta$ with proximity graph $\operatorname{PG}$ have bounded negativity whenever $\delta \geq a(\operatorname{PG})$. The integer $a(\operatorname{PG})$ is the maximum of the set of positive integers $$\mathfrak{A} := \{a(\operatorname{APG}(X,\pi)) \; | \; \pi \mbox{ is as in Proposition \ref{pr_intro} and has proximity graph } \operatorname{PG} \}$$
and it is of purely combinatorial nature in the sense that it depends only on the relative position among the exceptional divisors (and not on a specific choice of the blown up infinitely near points). Note that, according the forthcoming Corollary \ref{cor:delta0}, $\mathfrak{A}$ is a finite set.

This paper is structured as follows. Section \ref{sec:configuration} recalls the concepts of configuration, constellation and chain of infinitely near points over a surface. These concepts together with that of divisorial valuation of the quotient field of a surface, also presented in Section \ref{sec:configuration}, will be important in the paper. Section \ref{Sect3} introduces the arrowed proximity graph and our main results, Theorem \ref{Thm:Cone_equiv_conds} and Corollaries \ref{cor:delta0} and \ref{cor:value_b}, are stated and proved in Section \ref{sec:main_section}. This section has four subsections and contains two examples. The first one, divided in two parts, Examples \ref{New} and \ref{New2}, is simpler than the second one, which is successively developed in Examples \ref{ex:grafo_proximidad}, \ref{example_1} and \ref{example_2}. These examples are intended to help understand our results and the complexity of the involved tools. Finally, Example \ref{ex:a_b_different} proves that the values $a(\operatorname{APG}(X,\pi))$ and $ b(\operatorname{APG}(X,\pi))$ forcing the minimal generation of the cone of curves and the finite generation of the Cox ring can be different.

\section{Configurations, constellations and chains of infinitely near points}\label{sec:configuration}

In this paper we are interested in surfaces obtained by blowing up at finitely many proper or infinitely near points of a smooth surface. The blowup centers constitute a configuration (of infinitely near points) and enjoy some properties that will be useful. We devote this section to recall the main concepts and some properties related to configurations, being \cite{Cas,CamGonMon} our main references.

Let $X$ be a smooth projective surface over an algebraically closed field $k$.  Denote by Bl$_p(X)$ the smooth surface given by blowing up at a closed point $p\in X$ and by $E_p$ the created exceptional divisor. A \emph{constellation of infinitely near points over $X$} (\emph{constellation over $X$} for short) is a nonempty set of closed points $\mathcal{C}=\{p_l\}_{l=1}^N$ such that $p_1\in X_0:=X$, $p_{l+1}\in X_l:=\text{Bl}_{p_{l}}(X_{l-1})\xrightarrow{\pi_{l}}X_{l-1}$ for $1\leq l\leq N$ and $p_1$ is the image of $p_l,$ for all $l$ such that $2\leq l\leq N$, under the composition of blowups giving rise to $p_l$. The point $p_1$ is called the \emph{origin} of the constellation and $\pi_\mathcal{C}:=\pi_1\circ\cdots\circ \pi_N:X_\mathcal{C}:=X_{N}\to X_0$ denotes the composition of the sequence of blowups at the points of $\mathcal{C}$.

Given $l,l'\in\{1,\ldots,N\},$ a point $p_{l'}$ is infinitely near $p_l$ if $p_{l'}=p_l$ or $p_l$ is the image of $p_{l'}$ under the composition giving rise to $p_{l'}$. It is denoted by $p_{l'}\geq p_l$ and defines a partial ordering on $\mathcal{C}.$ Clearly, $p_l\geq p_1$ for all $l\in\{1,\ldots,N\}$. We say that a point $p_l\in\mathcal{C}$ is \emph{maximal} if it is maximal with respect to $\geq$. When $\geq$ is a total order, $\mathcal{C}$ is called a \emph{chain}. For any constellation $\mathcal{C}$ and any point $p\in \mathcal{C}$, we can define the chain $\mathcal{C}^p:=\{q\in \mathcal{C}\,| \, p\geq q\}$. The number of points in $\mathcal{C}^p$ different from $p$ is called \emph{level} of $p$, denoted $\ell(p)$. The  origin of a constellation is the unique point of level $0$ of the constellation.

A \emph{configuration $\mathcal{C}$ of infinitely near points over $X$} (\emph{configuration over $X$} for short) is a finite disjoint union of constellations whose origins are points in $X$. Clearly, $\mathcal{C}$ defines a surface $X_\mathcal{C}$. Following \cite{CamGonMon}, fixed two configurations, $\mathcal{C}$ and $\mathcal{C}'$ over $X$, we can identify them if there exist an automorphism $\sigma:X\to X$ and an isomorphism $\sigma':X_\mathcal{C}\to X_{\mathcal{C}'}$ such that $\pi_{\mathcal{C}'}\circ\sigma'=\sigma\circ\pi_\mathcal{C}.$

Let $\mathcal{C}=\{p_l\}_{l=1}^N$ be a configuration of infinitely near points over $X$. We say that $p_{l'}$ is \emph{proximate} to $p_l$, $1\leq l,{l'}\leq N,$ denoted $p_{l'}\to p_l$, if $p_{l'}$ belongs to the strict transform of the exceptional divisor $E_{p_l}$ on $X_{l'-1}.$  According to \cite[Lemma 1.1.16]{Alb}, the exceptional divisor of a composition of blowups centered at closed points is the support of a simple normal crossing divisor \cite[Definition 4.1.1]{Laz1}. A point of $\mathcal{C}$ is \emph{satellite} if  it is proximate to other two points in $\mathcal{C}$. Otherwise, it is \emph{free}.

 \subsection{Divisorial plane valuations}\label{subsec:div_val}

Let $K$ be a field and set $K^*=K\setminus \{0\}$. A \emph{valuation} $\nu$ of $K$ is a surjective map $\nu :K^*\to G,$ where $G$ is a totally ordered commutative group, such that
$$
\nu(f+g)\geq \min\{\nu(f),\nu(g)\} \text{ and } \nu(fg)=\nu(f)+\nu(g),
$$
$\text{for }f,g\in K^*.$ The group $G$ is called the \emph{value group} of $\nu$. In addition, we denote by $R_\nu$ the local ring $R_\nu:=\{f\in K\setminus \{0\} \ | \ \nu(f)\geq 0\}\cup \{0\},$ called the \emph{valuation ring} of $\nu$, and by $\mathfrak{m}_\nu$ its maximal ideal $\mathfrak{m}_\nu:=\{f\in K\setminus\{0\} \ | \ \nu(f)>0\}\cup \{0\}$. If $K$ is the quotient field of a two-dimensional regular local ring $(R,\mathfrak{m})$ whose maximal ideal is $\mathfrak{m}$ and $\nu$ is {\it centered} at $R$, i.e. $R\cap\mathfrak{m}_\nu=\mathfrak{m}$, $\nu$ is said to be a \emph{plane valuation}. By \cite{Spiv} (see \cite{ZarSam}) plane valuations are in one-to-one correspondence with (not necessarily finite) sequences of blowups whose configurations over Spec$\,R$  are chains and   the origin is the closed point $p\in\text{Spec}\, R$ (associated to $\mathfrak{m}$). Divisorial (plane) valuations are those corresponding to finite sequences.
\medskip

For us, a divisorial valuation $\nu$ of (a smooth projective surface) $X$ is a valuation $\nu$ of the quotient field of the ring $R:=\mathcal{O}_{X,p},$ where $p$ is a closed point in $X,$ centered at $R$. The valuation $\nu$ defines a chain giving rise to a sequence of blowups
\begin{equation}\label{Eq_sequence_div_valuation}
\pi: X_\nu:=X_{n}\xrightarrow{\pi_{n}} X_{n-1}\rightarrow \cdots \rightarrow X_1 \xrightarrow{\pi_1} X_0=X,
\end{equation}
where $\pi_1$ is the blowup at the closed point $p=p_1\in X$ defined by the maximal ideal $\mathfrak{m}_1=\mathfrak{m}$ of $R$ and $\pi_{l}, l\geq 2,$ is the blowup at the unique closed point $p_{l}\in X_{l-1}$ belonging to the exceptional divisor $E_{p_{l-1}}$ created by $\pi_{l-1}$ and such that the plane valuation $\nu$ is centered at $\mathcal{O}_{X_{l-1},p_{l}}$. This type of sequences, where each blowup is performed at a point of the last created exceptional divisor, are also named \emph{simple}. Notice that the valuation $\nu$ satisfies $\nu(f)= \ord_{E_{p_n}}(f)$ for all $f\in K^*$ \cite{Spiv}.

Let $\mathcal{C}_\nu=\{p_l\}_{l=1}^n$ be the chain defined by a divisorial valuation $\nu$. For $l\geq 1$, $\mathfrak{m}_l$ denotes the maximal ideal corresponding to the closed point $p_l$. The value $\nu(\mathfrak{m}_l)$ is defined as $\nu(\mathfrak{m}_l):=\min\{\nu(f)\ | \ f\in\mathfrak{m}_l\setminus\{0\}\}$. The sequence  $(\nu(\mathfrak{m}_l))_{l=1}^n$ is called the \emph{sequence of values} of $\nu$. Values in this sequence satisfy the so-called \emph{proximity equalities} \cite[Theorem 8.1.7]{Cas}:
\begin{equation}\label{proximity_equalities}
\nu(\mathfrak{m}_l)=\sum_{p_{l'}\to p_l} \nu(\mathfrak{m}_{l'}), \, l\geq 0.
\end{equation}
When considering a divisorial valuation of $X$, we denote by $\varphi_{C}$ the germ of a curve $C$ on $X$ at the point $p$. Additio\-nally, $\varphi_l$ denotes an analytically irreducible germ at $p$ such that its strict transform on $X_l$ is transversal to the exceptional divisor $E_{p_l}$ at a general point of the exceptional locus. The value $\text{mult}_{p_{l'}}(\varphi_l)$ (respectively, $\text{mult}_{p_{l'}}(\varphi_C)$), $1\leq l,{l'} \leq n,$ denotes the multiplicity of the strict transform of $\varphi_l$ (respectively, $\varphi_C$) at $p_{l'}$. It holds that $\nu(\mathfrak{m}_l)=\text{mult}_{p_l}(\varphi_{n})$. We use frequently, without any mention, the so-called \emph{Noether formula for valuations}, which we recall here for the convenience of the reader. A proof can be found in \cite[Theorem 8.1.6]{Cas}.

\begin{proposition}
Let $\nu$ be a divisorial valuation of $X$, with associated configuration of infinitely near points  $\mathcal{C}_\nu:=\{p_l\}_{l=1}^n$, and let $C$ be a curve on $X$. Then
$$\nu(\varphi_C)=(\varphi_n,\varphi_C)_p=\sum_{l=1}^n {\rm mult}_{p_l}(\varphi_n)\cdot {\rm mult}_{p_l}(\varphi_C).$$
\end{proposition}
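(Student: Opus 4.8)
The plan is to pull both germs back to $X_\nu=X_n$ and read everything off intersection numbers there, in two steps. First I would fix the curvette $\varphi_n$ generically, so that it has no analytic branch in common with $C$ at $p$: this costs nothing, since $C$ has finitely many branches at $p$ while there are infinitely many analytically irreducible germs whose strict transform meets $E_{p_n}$ transversally at a general point of the exceptional locus (if a branch of $\varphi_n$ lay on $C$ the middle expression would be $+\infty$, whereas $\nu(\varphi_C)$ is a finite nonnegative integer, so the statement must be read with such a choice in force). Since the intersection multiplicity at $p$ is local and invariant under birational pullback, $(\varphi_n,\varphi_C)_p=(\pi^{*}\varphi_n)\cdot(\pi^{*}\varphi_C)$, the right-hand side being the intersection number of the total transforms, supported near $\pi^{-1}(p)$, on $X_n$. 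Write $\pi^{*}\varphi_n=\widetilde{\varphi_n}+E$ with $E$ supported on the exceptional locus of $\pi$, and $\pi^{*}\varphi_C=\widetilde{\varphi_C}+\sum_{l=1}^{n}c_l\,E^{*}_{p_l}$, the decomposition of the total transform into its strict transform and its exceptional components, written in the basis $\{E^{*}_{p_l}\}_{l=1}^{n}$ of strict transforms of the exceptional divisors of $\pi$. Since neither $\widetilde{\varphi_C}$ nor the $E^{*}_{p_l}$ with $l<n$ carries an $E_{p_n}$-component, $c_n=\ord_{E_{p_n}}(\pi^{*}\varphi_C)=\nu(\varphi_C)$. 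As each $E^{*}_{p_l}$ is $\pi$-exceptional, $E^{*}_{p_l}\cdot\pi^{*}D=0$ for every class $D$ on $X$, hence $E\cdot\pi^{*}\varphi_C=0$ and $(\varphi_n,\varphi_C)_p=\widetilde{\varphi_n}\cdot\widetilde{\varphi_C}+\sum_{l=1}^{n}c_l\,(\widetilde{\varphi_n}\cdot E^{*}_{p_l})$. Finally $\widetilde{\varphi_n}$ is smooth near the exceptional locus and meets it transversally at one general point of $E_{p_n}=E^{*}_{p_n}$, chosen off $\widetilde{\varphi_C}$, so $\widetilde{\varphi_n}\cdot\widetilde{\varphi_C}=0$, $\widetilde{\varphi_n}\cdot E^{*}_{p_l}=0$ for $l<n$, and $\widetilde{\varphi_n}\cdot E^{*}_{p_n}=1$; therefore $(\varphi_n,\varphi_C)_p=c_n=\nu(\varphi_C)$.

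For the second equality, which is the classical Noether formula, I would iterate the blowup identity for local intersection multiplicities
\[
(\varphi,\psi)_p=\mult_p(\varphi)\,\mult_p(\psi)+\sum_{q}(\widetilde{\varphi},\widetilde{\psi})_q,
\]
the sum running over the points $q$ of the exceptional divisor $E_p$ that lie on both strict transforms; this identity is the same projection-formula computation as above, performed for the single blowup $\pi_1$. Applying it along the chain $p_1,\dots,p_n$ and using that $\varphi_n$ is analytically irreducible, so that at each stage the strict transform of $\varphi_n$ meets the newly created exceptional divisor at the single point $p_{l+1}$ (and on $X_n$ at a general point avoiding $\widetilde{\varphi_C}$), the recursion produces exactly the terms $\mult_{p_l}(\varphi_n)\,\mult_{p_l}(\varphi_C)$ for $l=1,\dots,n$, with no boundary contribution, and with the terms for which $p_l\notin C$ vanishing automatically. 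Combining the two steps yields $\nu(\varphi_C)=(\varphi_n,\varphi_C)_p=\sum_{l=1}^{n}\mult_{p_l}(\varphi_n)\,\mult_{p_l}(\varphi_C)$; since $\mult_{p_l}(\varphi_n)=\nu(\mathfrak{m}_l)$, the right-hand side does not depend on the chosen curvette.

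The statement is classical (see \cite[Theorem 8.1.6]{Cas}), so I do not anticipate a genuine obstacle. The two points that require care are the genericity of the curvette $\varphi_n$ --- needed both to keep all three quantities finite and to make $\widetilde{\varphi_n}$ and $\widetilde{\varphi_C}$ disjoint on $X_n$ --- and the verification that, $\varphi_n$ being an irreducible germ realizing the entire chain $p_1,\dots,p_n$ with a single branch direction at each blowup, no spurious terms enter the recursion in the second step.
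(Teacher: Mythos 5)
The paper does not actually prove this proposition: it is recalled ``for the convenience of the reader'' with a pointer to \cite[Theorem 8.1.6]{Cas}, so there is no in-paper argument to compare yours against. Your proof is correct and is essentially the standard argument behind that reference: the first equality by pulling both germs back to $X_n$ and applying the projection formula together with the genericity of the curvette, the second by iterating the one-blowup Noether identity along the chain $p_1,\dots,p_n$. Two remarks. First, there is a notational slip: you describe the divisors $E^{*}_{p_l}$ as ``strict transforms'' while writing them with the star that this paper reserves for \emph{total} transforms; the identities you actually invoke, namely $c_n=\ord_{E_{p_n}}(\pi^{*}\varphi_C)=\nu(\varphi_C)$ and $\widetilde{\varphi_n}\cdot E^{*}_{p_l}$ equal to $1$ for $l=n$ and $0$ for $l<n$, are the ones valid for the strict transforms, so the argument is internally consistent once the notation is repaired. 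Second, your two steps can be merged: expanding $\pi^{*}\varphi_C$ in the basis of total transforms gives coefficients $\mult_{p_l}(\varphi_C)$, and the projection formula gives $\widetilde{\varphi_n}\cdot E^{*}_{p_l}=\mult_{p_l}(\varphi_n)$ for the total transforms, so the same intersection number $\widetilde{\varphi_n}\cdot\pi^{*}\varphi_C$ computed in the two bases yields $\nu(\varphi_C)$ on one side and $\sum_{l=1}^{n}\mult_{p_l}(\varphi_n)\,\mult_{p_l}(\varphi_C)$ on the other, proving both equalities at once and making the separate induction of your second step unnecessary. The point you flag as delicate --- choosing $\varphi_n$ generic so that it shares no branch with $\varphi_C$ and so that $\widetilde{\varphi_n}$ misses $\widetilde{\varphi_C}$ on $X_n$ --- is indeed the only place where care is needed, and you handle it correctly; this is exactly what the qualifier ``at a general point of the exceptional locus'' in the paper's definition of $\varphi_n$ is for.
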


The sequence of maximal contact values $(\overline{\beta}_w)_{w=0}^{g+1}$ of a divisorial valuation $\nu$ is a well-known invariant related to divisorial valuations. See \cite[(1.5.3)]{DelGalNun} for the definition. The sequence $(\overline{\beta}_w)_{w=0}^{g}$ is the minimal generating set of the {\it semigroup of values} of $\nu$, $S(\nu) :=\nu(R\setminus\{0\})$, \cite[Remark 6.1]{Spiv}, and the value $\overline{\beta}_{g+1}$ coincides with the inverse of the volume of $\nu$, $[\text{vol}(\nu)]^{-1},$ where
$$
\text{vol}(\nu):=\lim_{\alpha\to\infty}\dfrac{\dim_k(R/ \mathcal{P}_\alpha)}{\alpha^2/2}
$$
and $\mathcal{P}_\alpha=\{f\in R \ | \ \nu(f)\geq\alpha\}\cup \{0\}$ (see \cite[Remark 2.3]{GalMonMoyNic2}).

\section{The arrowed proximity graph}\label{Sect3}

Any rational smooth projective surface can be obtained by blowing up at a configuration of infinitely near points over the projective plane or a Hirzebruch surface $\mathbb{F}_\delta$, $\delta\geq 0$. We are mainly interested in the case where the configuration is over $\mathbb{F}_\delta$. This section is devoted to recall some known facts on the obtained rational surfaces and to introduce a new concept related to them: the arrowed proximity graph. It will be useful in this article.

Recall that $\mathbb{F}_\delta$ is a ruled surface (over an algebraically closed field $k$) together with a projective morphism $\psi:\mathbb{F}_\delta \to \mathbb{P}^1,$ where $\mathbb{P}^1$ is the projective line over $k$. Denote by $F$ a fiber of the projective morphism $\psi$.  Following \cite{Har}, there exists a section $M_0$ of $\psi$ with self-intersecion $-\delta$. When $\delta$ is positive, $M_0$ is the unique reduced and irreducible curve with negative self-intersection and we call it \emph{special section}. In the case $\delta =0$, $M_0$ is simply a section of self-intersection zero. In addition, denote by $M$ a section which is linearly equivalent to the divisor $M_0+\delta F$ and such that $M_0\cap M=\emptyset$. The Picard group Pic$(\mathbb{F}_\delta)$ of $\mathbb{F}_\delta$ is generated by the classes of $F$ and  $M$ and it holds that $F^2=0,F\cdot M=1$ and $M^2=\delta$.

As we said above, we center our interest in configurations over Hirzebruch surfaces.
That is, we consider any rational surface $X$ given by blowing up at a configuration $\mathcal{C}=\{p_l\}_{l=1}^N$ over a Hirzebruch surface $\mathbb{F}_\delta$.
Set $\pi = \pi_{\mathcal{C}}:X\to \mathbb{F}_\delta$ the composition of blowups  giving rise to $X$ and defined by $\mathcal{C}.$ It is clear that $\mathcal{C}$ is a disjoint union of constellations $\mathcal{C}_i, 1\leq i\leq r$. Denote by $F_i$ the fiber on $\mathbb{F}_\delta$ passing through the origin of $\mathcal{C}_i$, for $i=1,\ldots,r$ (notice that the fibers $F_1,\ldots,F_r$ need not to be different).

Denote by $E_l:=E_{p_l}$ the exceptional divisor created after blowing up at the point $p_l$ and, abusing the notation, set $E_l$ (respectively, $E^*_l$) the strict (respectively, total) transform of the exceptional divisor $E_l$ on $X$. Also, for each divisor $D$ on $\mathbb{F}_\delta$, $\tilde{D}$ (respectively, $D^*$) denotes the strict (respectively, total) transform of the divisor $D$ on $X$.

Let $\{p_{t_j}\}_{j=1}^m$ be the set of maximal points of $\mathcal C$ with respect to the ordering $\geq $. Each one of the divisors $E_{t_j}$ (which we name \emph{final exceptional divisors}) defines a divisorial valuation $\nu_j:=\nu_{E_{t_j}}$. Set ${\mathcal C}_{\nu_j}=\{p_{j\,k}\}_{k=1}^{n_j}\subset\mathcal{C}$ the chain defined by $\nu_j$; clearly $p_{j\,1}\in \mathbb{F}_{\delta}$ and $p_{j\,k}\to p_{j\,k-1}$ for all $k\in \{2,\ldots, n_j\}$. The configuration ${\mathcal C}$ can be expressed as follows:
\begin{equation}\label{val}
{\mathcal C}=\bigcup_{j=1}^m {\mathcal C}_{\nu_j}.
\end{equation}
In addition, we use the notation $E_{j\,k}$ to indicate the exceptional divisor created by blowing up at $p_{j\,k}$ and $\varphi_{j\,k}$ stands for an analytically irreducible germ at $p_{j\,1}$ such that its strict transform is transversal to the exceptional divisor $E_{j\,k}$ at a general point. Notice that the origin of each chain ${\mathcal C}_{\nu_j}$ belongs to a unique fiber on $\mathbb{F}_\delta$, and each one of these fibers contains the origin of at least one divisorial valuation $\nu_j$. Sometimes we use the notations $F_{\nu_j}$ or $F_j$ to indicate the fiber going through the origin of the chain given by $\nu_j$. In order to avoid excessive cumbersome notation, a subscript $i$ (respectively, $j$) in $F_i$ (respectively, $F_j$) indicates that $F_i$ (respectively, $F_j$) is the fiber going through the origin of the constellation $\mathcal{C}_i$, $1\leq i\leq r$ (respectively, chain $\mathcal{C}_{\nu_j}$, $1\leq j\leq m$).

Next we introduce an important object in this paper. It is attached to a rational surface $X$ obtained by a composition of blowups $\pi:X\rightarrow \mathbb{F}_\delta$. It is named the \emph{arrowed proximity graph of} $(X,\pi)$ and it is denoted by $\operatorname{APG}(X,\pi)$. Denoting by $\mathcal{C}$ the configuration of centers of $\pi$ and keeping the above notation, the graph $\operatorname{APG}(X,\pi)$\label{apgg} is formed by the disjoint union of $r$ labeled rooted trees with arrows, each one of them corresponding to a constellation $\mathcal{C}_i$. The vertices of each tree correspond to the points in $\mathcal{C}_i$ and are labeled with $p_{j\,k}$. Notice that a point in $\mathcal{C}_i$ can have different labels if it corresponds to different valuations. Without loss of generality, we use any of them. Two points $p,q\in\mathcal{C}_i$ are joined by an edge if $q>p$ and $\ell(q)=\ell(p)+1.$ The origin of $\mathcal{C}_i$ determines the root. We also add edges joining those points $p$ and $q$ in $\mathcal{C}_i$ such that $q\to p$ and $\ell(q)>\ell(p)+1,$ although for simplicity when depicting the graph we delete those edges one can deduce from others. Indeed, if $r, p, q \in \mathcal{C}$, $r \geq q \geq p$, $q \neq p$ and $r \rightarrow p$, then $r \rightarrow q$ by \cite[Theorem 1.6 (iii)]{CamGozJal} and the edge joining $q$ and $p$ can be deleted because it is deduced from that joining $r$ and $p$. We complete our graph by adding arrows with a label $\tilde{F}_i$ or $\tilde{M}_0$ to those vertices of the graph corresponding to the last points of the constellations $\mathcal{C}_i$ through which the strict transform of the fibers $F_i$ or the special section $M_0$ go. Notice that $\operatorname{APG}(X,\pi)$ depends only on the configuration $\mathcal{C}$, the fibers $F_i$ and the special section $M_0$.

If one deletes the arrows in the arrowed proximity graph $\operatorname{APG}(X,\pi)$, one gets the {\it proximity graph} of the pair $(X,\pi)$. A less explicit version of this last graph was introduced in \cite[Section 1]{CamGozJal} for configurations over smooth varieties of dimension larger than one. In our context, the proximity graph determines and it is determined by other combinatorial objects like the dual graph or the Enriques diagram of the configuration $\mathcal{C}$ (see, for example, \cite{Spiv, Galog, Cas} for these last concepts). We use the proximity graph because it eases treatment with multiplicities and proximity equalities. The concepts introduced in this section and the preceding one are related with the notion of {\it bubble space} \cite[Section 7.3.2]{Dolgachev}.

\begin{figure}[htbp]
    \includegraphics[scale=1.5]{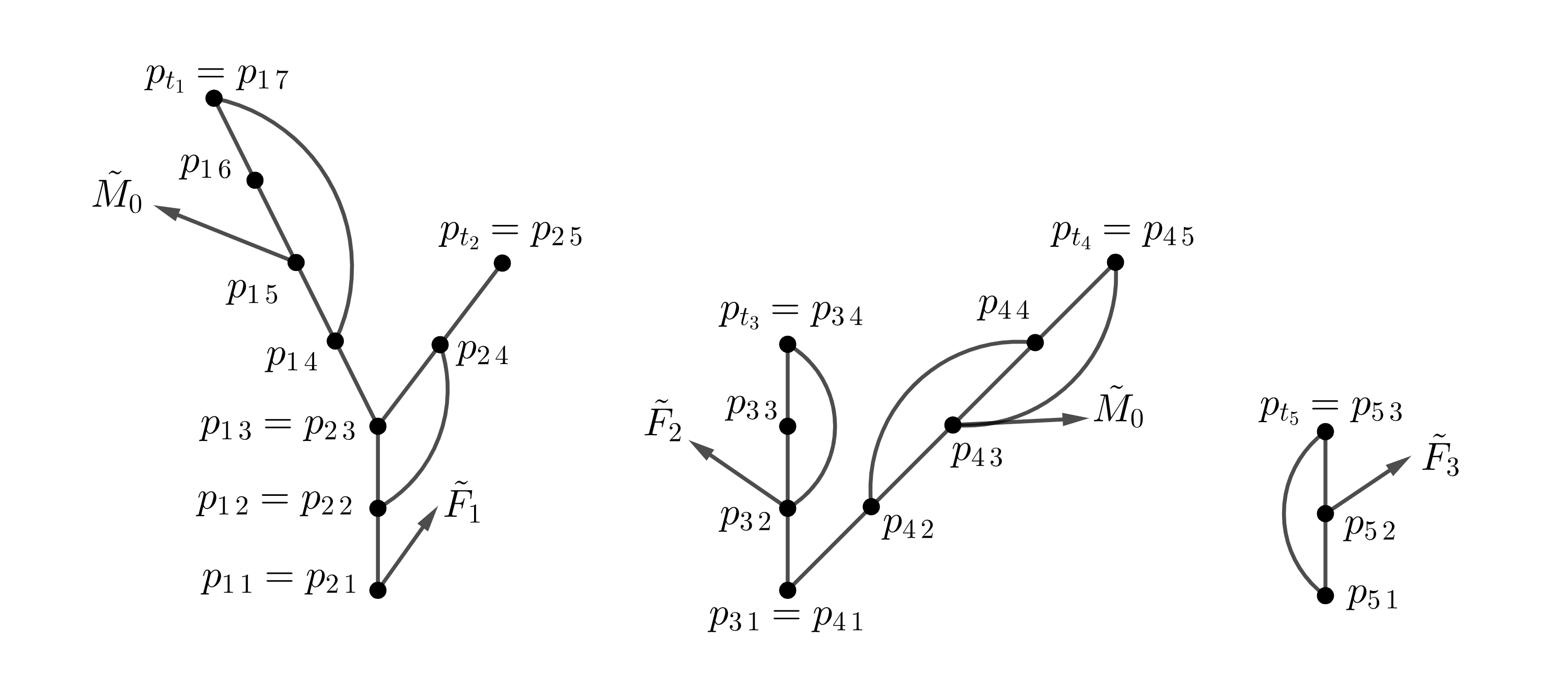}
    \caption{Arrowed proximity graph $\operatorname{APG}(X,\pi)$ of the rational surface $X$ in Examples \ref{ex:grafo_proximidad}, \ref{example_1} and \ref{example_2}.}\label{fig_conf_F_delta}
\end{figure}

\begin{example}\label{ex:grafo_proximidad}
Figure \ref{fig_conf_F_delta} shows the arrowed proximity graph $\operatorname{APG}(X,\pi)$ of a rational surface $X$ obtained by blowing up the surface $\mathbb{F}_\delta$ at a configuration $\mathcal{C}=\{p_l\}_{l=1}^{20}=\cup_{i=1}^3\mathcal{C}_i$ of twenty infinitely near points.
Notice that the graph $\operatorname{APG}(X,\pi)$ shows the topology of the exceptional locus of the composition $\pi:X\to\mathbb{F}_\delta$ (when the ground field is $\mathbb{C}$) and also indicates the irreducible components of the exceptional locus where the strict transforms of the fibers $F_i$ and the special section $M_0$ are transversal. Moreover we indicate the maximal points with the labels $p_{t_j},1\leq j\leq 5$. Note that the edge joining $p_{16}$ and $p_{14}$ has been deleted because the fact $p_{16} \rightarrow p_{14}$ can be deduced from the proximity $p_{17} \rightarrow p_{14}$.
\end{example}

\section{The effective cone and the Cox ring}\label{sec:main_section}

Along this section, unless otherwise stated, $X$ denotes a rational surface obtained by a composition of blowups $\pi:X\to\mathbb{F}_\delta$. We also keep the notation introduced in the preceding section.

\subsection{The cone $\mathfrak{C}(X)$ and its dual} \label{41}

Denote by $\NS(X)$ the Néron-Severi group of the surface $X$ and set $\NS_\mathbb{R}(X):=\NS(X)\otimes \mathbb{R}$. As usual, the symbol $\cdot$ stands for the intersection product. Let $\NE(X)$ (respectively, $\overline{\NE}(X)$), $\Nef(X)$) be the cone of curves (respectively, the \emph{pseudoeffective cone} or \emph{Mori cone}, the \emph{nef cone}) of $X$.

Consider the convex cone $\mathfrak{C}(X)$ (embedded into $\NS_\mathbb{R}(X)$) generated by the following set of classes of  divisors
\begin{equation}\label{generators}
\big\{[\tilde{F}_i]\big\}_{i=1}^r\cup \big\{[\tilde{M}_0]\big\}\cup\big\{[E_l]\big\}_{l=1}^N.
\end{equation}
The dual cone of $\mathfrak{C}(X)$ is defined as
$$
\mathfrak{C}^\vee(X):=\big\{[D]\in\text{NS}_{\mathbb{R}}(X) \ | \ [D]\cdot [C]\geq 0, \text{ for all }[C]\in \mathfrak{C}(X)\big\}.
$$
The following result determines a set of generators of the cone $\mathfrak{C}^\vee(X)$ and it will be crucial for our forthcoming main result.

\begin{lemma}\label{lema:Dual_cone_S}
Let $X$ be a rational surface obtained by blowing up at a configuration $\mathcal{C}$ of infinitely near points over a Hirzebruch surface $\mathbb{F}_\delta$, with $\delta>0$. Keep the notation introduced in this section. The cone $\mathfrak{C}^\vee(X)$ is generated by the classes in $\NS_\mathbb{R}(X)$ of the following (not necessarily distinct) divisors:
\begin{itemize}
\item[•] $F^*$ and $M^*.$
\item[•] For each $j,1\leq j\leq m,$ corresponding to the divisorial valuations defined by the final exceptional divisors,
$$
\Lambda_{j\,k}:=\Lambda_k(\nu_{j}):= a_{j\,k} F^* +  b_{j\,k}M^* - \sum_{\lambda=1}^k\mult_{p_{j\,\lambda}}(\varphi_{j\,k})E_{j\,\lambda}^*,
$$
where $k$ runs over the set $\{k \ | \ 1\leq k \leq n_j\}$ and $a_{j\,k}$ (respectively, $b_{j\,k}$) denotes the intersection multiplicity $(\varphi_{M_0},\varphi_{j\,k})_{p_{j\,1}}$ (respectively, $(\varphi_{F_{j}},\varphi_{j\,k})_{p_{j\,1}}$), $F_{j}:=F_{\nu_j}$ being the fiber associated to the divisorial valuation $\nu_j.$
\item[•] For each subset $\{j_1,\ldots,j_s\}\subseteq \{1,\ldots,m\}$ of cardinality $s$, $1<s\leq r$, such that the chains in the set $\big\lbrace \mathcal{C}_{\nu_{j_h}} \big\rbrace_{h=1}^s$ are pairwise disjoint and their origins  belong to different fibers on $\mathbb{F}_\delta$, 
\begin{align*}
W_{(j_1\,k_1),\ldots,(j_s\,k_s)}:=&
\left(\sum_{h=1}^s z_{j_{h}\,k_{h}}\cdot a_{j_{h}\,k_{h}}\right)F^*+
\left(z_{j_1\,k_1}\cdot b_{j_{1}\,k_{1}}\right)M^*
\\ &
-\sum_{h=1}^s \sum_{\lambda=1}^{k_h} z_{j_{h}\,k_{h}}\mult_{p_{j_h\,\lambda}}(\varphi_{j_{h}\,k_{h}})E_{j_{h}\,\lambda}^*,
\end{align*}
where $ k_h,1\leq h\leq s,$ runs over the set $\{k_h \ | \ 1\leq k_h \leq n_{j_h}\}$ and $(z_{j_{1}\,k_{1}},\ldots,z_{j_{s}\,k_{s}})$ is any element in $(\mathbb{Z}_{>0})^s$ satisfying the following  equalities:
$$
z_{j_{1}\,k_{1}}\cdot b_{j_{1}\,k_{1}}=\cdots =z_{j_{s}\,k_{s}}\cdot b_{j_{s}\,k_{s}}.
$$
\end{itemize}
\end{lemma}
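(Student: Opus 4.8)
The goal is to prove both inclusions between $\mathfrak{C}^{\vee}(X)$ and the cone $\sigma$ generated by $F^{*},M^{*}$, the $\Lambda_{j\,k}$ and the $W_{(j_{1}\,k_{1}),\ldots,(j_{s}\,k_{s})}$. I will work in the basis $\{F^{*},M^{*},E_{1}^{*},\ldots,E_{N}^{*}\}$ of $\NS_{\mathbb{R}}(X)$, using $F^{*}\cdot F^{*}=0$, $F^{*}\cdot M^{*}=1$, $M^{*}\cdot M^{*}=\delta$, $E_{l}^{*}\cdot E_{l'}^{*}=-\delta_{l\,l'}$, $E_{l}^{*}\cdot F^{*}=E_{l}^{*}\cdot M^{*}=0$, together with $\tilde{F}_{i}=F^{*}-\sum_{l}\mult_{p_{l}}(\varphi_{F_{i}})E_{l}^{*}$, $\tilde{M}_{0}=M^{*}-\delta F^{*}-\sum_{l}\mult_{p_{l}}(\varphi_{M_{0}})E_{l}^{*}$ and $E_{l}=E_{l}^{*}-\sum_{p_{l'}\to p_{l}}E_{l'}^{*}$.

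\emph{The inclusion $\sigma\subseteq\mathfrak{C}^{\vee}(X)$.} It is enough to check that each listed generator pairs non-negatively with each of $\tilde{F}_{i}$, $\tilde{M}_{0}$ and $E_{l}$. For $F^{*}$ and $M^{*}$ this is immediate. For $\Lambda_{j\,k}$, recalling that the germ $\varphi_{j\,k}$ passes exactly through $p_{j\,1},\ldots,p_{j\,k}$ and one further general (hence not blown-up) point of $E_{p_{j\,k}}$, and using the Noether formula for valuations together with the proximity equalities \eqref{proximity_equalities} for $\varphi_{j\,k}$, one obtains
$$\Lambda_{j\,k}\cdot E_{l}=\begin{cases}1&\text{if }p_{l}=p_{j\,k},\\0&\text{otherwise,}\end{cases}\qquad\Lambda_{j\,k}\cdot\tilde{M}_{0}=0,\qquad\Lambda_{j\,k}\cdot\tilde{F}_{i}=\begin{cases}0&\text{if }F_{i}=F_{j},\\b_{j\,k}&\text{if }F_{i}\neq F_{j},\end{cases}$$
all non-negative; here $\mult_{q}(\varphi_{j\,k})=0$ for $q\in\mathcal{C}\setminus\{p_{j\,1},\ldots,p_{j\,k}\}$, so the two intersection numbers with $\tilde{M}_{0}$ and $\tilde{F}_{i}$ reduce by Noether to $a_{j\,k}-(\varphi_{M_{0}},\varphi_{j\,k})_{p_{j\,1}}$ and $b_{j\,k}-(\varphi_{F_{i}},\varphi_{j\,k})_{p_{j\,1}}$. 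Finally, from the identity
$$\sum_{h=1}^{s}z_{j_{h}\,k_{h}}\,\Lambda_{j_{h}\,k_{h}}=W_{(j_{1}\,k_{1}),\ldots,(j_{s}\,k_{s})}+(s-1)\,B\,M^{*},\qquad B:=z_{j_{1}\,k_{1}}b_{j_{1}\,k_{1}}=\cdots=z_{j_{s}\,k_{s}}b_{j_{s}\,k_{s}},$$
the computations above, and the hypotheses that the $\mathcal{C}_{\nu_{j_{h}}}$ are pairwise disjoint and $F_{j_{1}},\ldots,F_{j_{s}}$ pairwise distinct, one gets $W\cdot E_{l}\in\{0\}\cup\{z_{j_{h}\,k_{h}}:1\le h\le s\}$, $W\cdot\tilde{M}_{0}=0$ and $W\cdot\tilde{F}_{i}\in\{0,B\}$, again non-negative.

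\emph{The inclusion $\mathfrak{C}^{\vee}(X)\subseteq\sigma$.} The key observation is that $\Lambda_{j\,k}$ depends only on the point $p_{j\,k}$, not on the valuation $\nu_{j}$, since $\{p_{j\,1},\ldots,p_{j\,k}\}=\mathcal{C}^{p_{j\,k}}$, the germ $\varphi_{j\,k}$ and the fiber $F_{j}$ through the origin of $\mathcal{C}^{p_{j\,k}}$ are all determined by $p_{j\,k}$; write $\Lambda^{(l)}$ for this common class (and $\varphi^{(l)}$ for the corresponding germ) when $p_{l}=p_{j\,k}$, so $\{\Lambda^{(l)}\}_{l=1}^{N}=\{\Lambda_{j\,k}\}_{j,k}$. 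Since $\Lambda^{(l)}$ has $E_{p_{l}}^{*}$-coefficient $-\mult_{p_{l}}(\varphi^{(l)})=-1$ and involves only $E_{q}^{*}$ with $q\in\mathcal{C}^{p_{l}}$, the family $\{F^{*},M^{*},\Lambda^{(1)},\ldots,\Lambda^{(N)}\}$ is triangular with respect to $\{F^{*},M^{*},E_{1}^{*},\ldots,E_{N}^{*}\}$, hence a basis of $\NS_{\mathbb{R}}(X)$. Given $[D]\in\mathfrak{C}^{\vee}(X)$, write $[D]=\lambda_{F}F^{*}+\lambda_{M}M^{*}+\sum_{l}\mu_{l}\Lambda^{(l)}$; intersecting with $\tilde{M}_{0}$, with the $E_{p_{l}}$, and with $F^{*}$, and using the numbers above, gives $\lambda_{F}=[D]\cdot\tilde{M}_{0}\ge 0$, $\mu_{l}=[D]\cdot E_{p_{l}}\ge 0$ and $\lambda_{M}=[D]\cdot F^{*}-\Sigma_{\mathrm{tot}}$, where $\Sigma_{\mathrm{tot}}:=\sum_{l}\mu_{l}b^{(l)}$.

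If $\lambda_{M}\ge 0$ we are done. Otherwise, for every fiber $F$ set $\Sigma_{F}:=\sum_{F(p_{l})=F}\mu_{l}b^{(l)}$ ($F(p_{l})$ being the fiber through the origin of the constellation of $p_{l}$), so $\Sigma_{\mathrm{tot}}=\sum_{F}\Sigma_{F}$; intersecting $[D]$ with each $\tilde{F}_{i}$ yields $[D]\cdot F^{*}\ge\Sigma_{F_{i}}$, whence $-\lambda_{M}=\Sigma_{\mathrm{tot}}-[D]\cdot F^{*}\le\Sigma_{\mathrm{tot}}-\max_{F}\Sigma_{F}$. The plan is then to split each $\mu_{l}$ into non-negative summands and regroup the corresponding $\Lambda^{(l)}$'s into families — one $\Lambda^{(l)}$ per constellation, the chosen constellations lying over pairwise distinct fibers — so that, via the displayed identity, each family $\sum_{h}z_{h}\Lambda^{(l_{h})}$ becomes $W_{\bullet}+(s-1)BM^{*}$; every regrouping is an identity in $\NS_{\mathbb{R}}(X)$ that moves a positive amount to the coefficient of $M^{*}$ without altering $\lambda_{F}$ or the $E_{l}^{*}$-parts, and the bound on $-\lambda_{M}$ guarantees that one can move exactly $-\lambda_{M}$, making the new coefficient of $M^{*}$ vanish. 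As the resulting $W_{\bullet}$'s are positive multiples of those in the statement (clear denominators in the $z_{h}$), this exhibits $[D]$ as a non-negative combination of the listed classes. I expect the main obstacle to be precisely this last redistribution: it is a finite feasibility statement of transportation type and has to be handled with care because a single $W_{\bullet}$ may use at most one chain per fiber, so when $\sum_{F\neq F^{*}}\Sigma_{F}$ exceeds $\Sigma_{F^{*}}$ one is forced to use $W_{\bullet}$'s of length $>2$, and the bookkeeping when several constellations share a fiber, or several chains share an initial segment, must be made precise.
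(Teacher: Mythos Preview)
Your argument takes a route genuinely different from the paper's. The paper observes that $\mathfrak{C}(X)$ is a full-dimensional polyhedral cone, so $\mathfrak{C}^{\vee}(X)$ is generated by the inward normals to its facets; it then enumerates the codimension-one subspaces $\mathcal{L}$ spanned by $N+1$ of the generators $[\tilde F_i],[\tilde M_0],[E_l]$, solves for $\mathcal{L}^\perp$ (using the proximity equalities and Noether's formula), and checks which of the resulting normals actually lie in $\mathfrak{C}^{\vee}(X)$---this produces exactly $F^*,M^*,\Lambda_{j\,k},W_{\bullet}$. Your approach instead changes basis to $\{F^*,M^*,\Lambda^{(1)},\ldots,\Lambda^{(N)}\}$ and reads off the coordinates of an arbitrary $[D]\in\mathfrak{C}^{\vee}(X)$ directly from the pairings with $\tilde M_0$ and the $E_l$. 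Your verification of $\sigma\subseteq\mathfrak{C}^{\vee}(X)$, the triangular basis argument, and the identifications $\lambda_F=[D]\cdot\tilde M_0\ge 0$, $\mu_l=[D]\cdot E_{p_l}\ge 0$ are all correct and pleasantly explicit.

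The gap is precisely where you flag it. What remains is a combinatorial lemma: given nonnegative reals $(\Sigma_F)_F$ indexed by the distinct fibers and any $G$ with $0\le G\le\sum_F\Sigma_F-\max_F\Sigma_F$, one can write $G=\sum_\alpha(s_\alpha-1)B_\alpha$ with ``packets'' $\alpha$, each drawing mass $B_\alpha$ from $s_\alpha$ distinct fibers, the total drawn from any fiber never exceeding $\Sigma_F$. This does hold---a greedy layer-cake slicing (at each step take $B$ equal to the minimum remaining mass among the still-active fibers, form a packet of that size over all of them, and repeat) reaches total gain exactly $\sum_F\Sigma_F-\max_F\Sigma_F$, and one stops mid-slice to hit any smaller $G$; the further refinement needed so that each packet uses a single point per fiber (hence a single chain, hence a legitimate $W_\bullet$) is obtained by subdividing a packet whenever the current point's remaining mass runs out. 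So your approach can be completed, and it is more constructive than the paper's (it yields an explicit decomposition of every $[D]$), but the paper's facet enumeration sidesteps this transportation argument entirely at the cost of a longer case analysis.
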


\begin{proof}
According to \cite[Section 1.2]{Ful2}, the cone $\mathfrak{C}(X)$ can be regarded as the intersection of the half-spaces defined as follows
$$
\mathcal{H}_\sigma:=\{z\in\NS_\mathbb{R}(X) \ | \ u_\sigma\cdot z\geq 0\},
$$
	where $\sigma$ ranges over the faces of $\mathfrak{C}(X)$ of codimension one  and $u_{\sigma}$ is an element in $ \mathfrak{C}^\vee(X)$ such that $\sigma=\mathfrak{C}(X)\cap u_\sigma^\perp$. As a consequence, the $u_\sigma$'s generate the convex cone $\mathfrak{C}^\vee(X)$. This implies that it suffices to check that the classes of the divisors described in the statement belong to $\mathfrak{C}^\vee(X)$, to consider every codimension one linear subspace $\mathcal{L}$ generated by $N+1$ elements in (\ref{generators}) and verify that its orthogonal complement $\mathcal{L}^\perp$ is generated by the class of some of the divisors given in the statement.

Let $\mathcal{L}$ be as above. Then, there exists a nonzero divisor
\begin{equation}\label{eq:prop_extremalrays_nefcone}
D:=aF^*+bM^*-\sum_{l=1}^N c_l E_l^*,
\end{equation}
such that $a, b$ and $c_l,1\leq l\leq N$, are nonnegative integers, and its class $[D]$ belongs to $\mathfrak{C}^\vee(X)$ and generates the orthogonal subspace  $\mathcal{L}^\perp$. Let us see the different possibilities for $\mathcal{L}$ and $D$.

If $\mathcal{L}$ contains all the classes of the strict transforms of the exceptional divisors, there are two possible cases:
\begin{itemize}
\item[$-$] $\mathcal{L}=\langle \{[\tilde{F}_i]\}\cup \{[E_l]\}_{l=1}^N \rangle$ for some $1\leq i\leq r$ and, therefore, $\mathcal{L}^\perp=\langle [F^*]\rangle$, and
\item[$-$] $\mathcal{L}=\langle \{[\tilde{M}_0]\}\cup \{[E_l]\}_{l=1}^N \rangle$ and, therefore, $\mathcal{L}^\perp=\langle [M^*] \rangle.$
\end{itemize}
Since $[F^*]$ and  $[M^*]\in \mathfrak{C}^{\vee}(X)$, $F^*$ and $M^*$ are two possibilities for the divisor $D$.

Now we study the case when $\mathcal{L}$ is generated by the class of $\tilde{M}_0$ and the class of a fiber $\tilde{F}_i,$ $1\leq i\leq r,$  and the remaining generators are classes of  strict transforms of exceptional divisors. That means that
\begin{equation}\label{eq:lema_nefdiv_hyperplane_lambda}
\mathcal{L}=\langle \{[\tilde{F}_i],[\tilde{M}_0] \} \cup \{[E_l]\}_{1\leq l\leq N, l\neq k'} \rangle,
\end{equation}
for some index $k'\in \{1,\ldots,N\}.$ With the previous notation, assume that $E_{k'}$ is created by blowing up at the point $p_{j\,k}$ for some $j\in\{1,\ldots,m\}$ and $k\in\{1,\ldots,n_j\}$. A divisor $D$ as in \eqref{eq:prop_extremalrays_nefcone} such that $[D]\in\mathfrak{C}^{\vee}(X)$ and it is ortho\-gonal to a linear space $\mathcal{L}$ as in \eqref{eq:lema_nefdiv_hyperplane_lambda} must satisfy:
\begin{equation}\label{eq:conds_lambda}
\begin{array}{r}
[D]\cdot [\tilde{F}_{i}]=0, [D]\cdot [\tilde{M}_0]=0, [D]\cdot [E_l]=0, 1\leq l\leq N, \text{ for } l\neq k', [D]\cdot [E_{k'}]> 0 \\ \text{and }  [D]\cdot [\tilde{F}_{\tau}]\geq 0, \text{ for } 1\leq \tau\leq r, \tau\neq i.
\end{array}
\end{equation}
Note that $[D]\cdot [E_{k'}]> 0$ is mandatory in order to obtain a nonvanishing divisor. For simplicity we can assume that $c_{k'}=[D]\cdot [E_{k'}^*]= [D]\cdot [E_{k'}]=\mult_{j\,k}(\varphi_{j\,k})=1$. In addition, it holds that
$$
0=[D]\cdot [E_l]=c_l-\sum_{p_\alpha\to p_l}c_\alpha, 1\leq l\leq N, \text{ for } l\neq k'.
$$
Therefore, the proximity equalities and the previous equation prove that the coefficients $c_l$ of $D$ are:
\begin{equation}\label{eq:lema_nefdiv_hyperplane_lambda_c_ell}
c_l= \Bigg\{
\begin{array}{cl}
\mult_{p_{j\,\lambda}}(\varphi_{j\,k}), & \text{when }E_l\text{ is created by blowing up at } p_{j\,\lambda}, 1\leq \lambda\leq k,\\[2mm]
0, & \text{otherwise.}
\end{array}
\end{equation}

The class $[\tilde{F}_j]$ corresponding to the fiber attached to the valuation $\nu_j$ must be a generator of $\mathcal{L}$ in \eqref{eq:lema_nefdiv_hyperplane_lambda}. Otherwise, reasoning by contradiction, if $[\tilde{F}_i]\neq [\tilde{F}_j]$ is a generator, the facts that $[D]\cdot [\tilde{F}_{i}]=0$ and that $F_i$ does not pass through $p_{j\,1}$ prove $b=0$ in the expression \eqref{eq:prop_extremalrays_nefcone} of the divisor $D$ we are looking for. Hence $[D]\cdot [\tilde{F}_{j}]< 0$ because $b=0$ and $F_j$ goes through $p_{j\,1}$, a contradiction.

It remains to  compute the coefficients $a$ and $b$ of $D$. From \eqref{eq:conds_lambda}, one gets
$$
\begin{array}{l}
0=[D]\cdot [\tilde{F}_{j}]=b-\sum_{\lambda=1}^{k}\mult_{p_{j\,\lambda}}(\varphi_{F_j})\mult_{p_{j\,\lambda}}(\varphi_{j\,k}) \text{ and }\\[2mm]
0=[D]\cdot [\tilde{M}_{0}]=a-\sum_{\lambda=1}^{k}\mult_{p_{j\,\lambda}}(\varphi_{M_0})\mult_{p_{j\,\lambda}}(\varphi_{j\,k}).
\end{array}
$$
By the Noether formula, we obtain that  $a=(\varphi_{M_0},\varphi_{j\,k})_{p_{j\,1}}$ and $b=(\varphi_{F_{j}},\varphi_{j\,k})_{p_{j\,1}}.$ This proves that the divisors $\Lambda_{j\,k}$ are generators of the cone $\mathfrak{C}^\vee(X)$.
\medskip

To conclude, we are going to show that we have to add the remaining divisors described in the statement to generate $\mathfrak{C}^\vee(X)$. The remaining linear subspaces $\mathcal{L}$ to be considered must be the following ones. Either
\begin{equation}\label{eq:prop_extremalrays_dualcone_1_part_1}
\mathcal{L}=\mathcal{L}_1^s:=\langle \{[\tilde{F}_{i_h}]\}_{h=1}^s\cup\{[\tilde{M}_0]\} \cup \{[E_l]\}_{1\leq l\leq N, l\neq k_{1}',\ldots,k_{s}'} \rangle,
\end{equation}
for some $s$ such that $1<s\leq r,$ where $1\leq i_h\leq r$ and $s$ different values $1\leq k_h'\leq N$, $1\leq h\leq s,$ are fixed, or
\begin{equation}\label{eq:prop_extremalrays_dualcone_1_part_2}
\mathcal{L}=\mathcal{L}_2^s:=\langle \{[\tilde{F}_{i_h}]\}_{h=1}^{s+1}\cup \{[E_l]\}_{1\leq l\leq N, l\neq k_{1}',\ldots,k_{s}'} \rangle,
\end{equation}
for some $s$ such that $1\leq s\leq r-1,$ where $1\leq i_h\leq r$ and $s$ different values $1\leq k_h'\leq N$, $1\leq h\leq s$, are fixed. Notice that we do not consider the value $s=1$ in the case $\mathcal{L}=\mathcal{L}_1^s$ because, then, $\mathcal{L}^\perp$ would be spanned by the class of one of the divisors $\Lambda_{jk}$.

As before, when considering $\mathcal{L}_1^s$, we look for a divisor $D$ as in \eqref{eq:prop_extremalrays_nefcone} such that
\begin{equation}\label{eq:lema_nefdiv_cond_L1}
\begin{array}{r}
[D]\cdot [\tilde{F}_{i_h}]=0, \text{ for } 1\leq h\leq s,[D]\cdot [\tilde{M}_0]=0, [D]\cdot [E_l]=0, 1\leq l\leq N, \text{ for } l\neq k_{1}',\ldots,k_{s}', \\[2mm]
[D]\cdot [E_{k_{h}'}]> 0, \text{ for } 1\leq h\leq s, \text{ and } [D]\cdot [\tilde{F}_{\tau}]\geq 0, \text{ for } \tau\in\{1,\ldots,r\} \setminus\{i_1,\ldots,i_s\}.
\end{array}
\end{equation}
Otherwise, $D$ must satisfy
\begin{equation*}
\begin{array}{c}
[D]\cdot [\tilde{F}_{i_h}]=0, \text{ for } 1\leq h\leq s+1, [D]\cdot [E_l]=0, 1\leq l\leq N, \text{ for } l\neq k_{1}',\ldots,k_{s}', \\[2mm]
[D]\cdot [E_{k_{h}'}]> 0, \text{ for } 1\leq h\leq s,[D]\cdot [\tilde{M}_0]\geq 0, \text{ and }\\[2mm]
 [D]\cdot [\tilde{F}_{\tau}]\geq 0, \text{ for } \tau\in\{1,\ldots,r\} \setminus\{i_1,\ldots,i_{s+1}\}.
\end{array}
\end{equation*}
In both cases we add the inequalities $[D]\cdot [E_{k_{h}'}]> 0, 1\leq h\leq s,$ in order to compute divisors different from those obtained with smaller values of $s$. Suppose that  $E_{k_1'},\ldots,E_{k_s'}$ are created by blowing up at the points $p_{j_1\,k_1},\ldots,p_{j_s\,k_s},$ respectively.
From the proximity equalities and the equalities
\begin{equation}\label{eq:lema_nefdiv_mult_E_ell}
0=[D]\cdot [E_l]=c_l-\sum_{p_\alpha\to p_l}c_\alpha, 1\leq l\leq N,  l\neq k_1',\ldots,k_s',
\end{equation}
one can deduce that the coefficients $c_l$ of $D$ are of the form
$$
c_l= \Bigg\{
\begin{array}{cl}
\omega_{j_h\,\lambda}, & \!\!\!\!\! \text{when }E_l\text{ is created after blowing up } p_{j_h\,\lambda}, 1\leq \lambda\leq k_h\text{ and }1\leq h\leq s.\\[2mm]
0, & \!\!\!\!\! \text{otherwise,}
\end{array}
$$
where $\omega_{j_h\,\lambda}$ is a positive integer.

To complete our proof, we have to take into account the following facts:	

1. \emph{All the fibers $F_{i_h}$ appearing in equations \eqref{eq:prop_extremalrays_dualcone_1_part_1} and \eqref{eq:prop_extremalrays_dualcone_1_part_2} must be of the form
$F_{j_h}=F_{\nu_{j_h}}$, where $1\leq h\leq s$}. Indeed, we are going to prove that the opposite case cannot hold. Firstly assume that no fiber is of the mentioned form. Then, $b=[D]\cdot [\tilde{F}_{i_h}]=0$ for $1\leq h\leq s$ since no fiber passes through $p_{j\,1},$ but $[D]\cdot [\tilde{F}_{j_h}]<0$ by the  proximity equalities, which is a contradiction. The other possibility is that some fiber (say $F_{j_h}$) is of the mentioned form and other one (say $F_{i_\ell}$) is not. Then $b>0$ from the condition $[D]\cdot [\tilde{F}_{j_h}]=0$ but also $b=[D]\cdot [\tilde{F}_{i_\ell}]=0$, which is again a contradiction. In particular, \emph{this fact discards to consider linear subspaces $\mathcal{L}_2^s$}.

2. \emph{The chains in the set $\big\lbrace \mathcal{C}_{\nu_{j_h}} \big\rbrace_{h=1}^s$ must be pairwise disjoint.} Otherwise, among the generators of $\mathcal{L}_1^s$ given in (\ref{eq:prop_extremalrays_dualcone_1_part_1}), it must appear the class of one of the fibers at least twice, and this contradicts the fact that $\mathcal{L}_1^s$ has codimension one.

3. \emph{The origins of the configurations $\mathcal{C}_{\nu_{j_h}}, 1\leq h\leq s,$ belong to different fibers on $\mathbb{F}_\delta$.} Other\-wise, at least two origins belong to the same fiber and then the linear subspace $\mathcal{L}_1^s$ does not have codimension one, a contradiction.

By the above Fact 2, the proximity equalities and \eqref{eq:lema_nefdiv_mult_E_ell} one
has that
$$
c_l= \Bigg\{
\begin{array}{cl}
z_{j_h\,k_h} \mult_{p_{j_h\,\lambda}}(\varphi_{j_h\,k_h}), & {\small \begin{array}{l}\text{when }E_l\text{ is created after blowing up } p_{j_h\,\lambda},\\ 1\leq \lambda\leq k_h\text{ and }1\leq h\leq s.\end{array}}\\[2mm]
0, & \ \, \small{\text{otherwise,}}
\end{array}
$$
for some positive integers $z_{j_h\,k_h}$, $1\leq h\leq s$.

It only remains to obtain the coefficients $a$ and $b$ of $[D]$ as in \eqref{eq:prop_extremalrays_nefcone}. Forcing to hold some equalities in  \eqref{eq:lema_nefdiv_cond_L1} and taking Fact 1 into account, one gets
\begin{equation*}
\begin{array}{l}
0=[D]\cdot [\tilde{F}_{j_h}]=b-z_{j_h\,k_h}\displaystyle\sum_{\lambda=1}^{k_h}\mult_{p_{j_h\,\lambda}}(\varphi_{F_{j_h}})\mult_{p_{j_h\,\lambda}}(\varphi_{j_h\,k_h}), \text{ for } 1\leq h\leq s, \text{ and}\\[2mm]
0=[D]\cdot [\tilde{M}_{0}]=a-\displaystyle\sum_{h=1}^s z_{j_h\,k_h}\displaystyle\sum_{\lambda=1}^{k_h}\mult_{p_{j_h\,\lambda}}(\varphi_{M_0})\mult_{p_{j_h\,\lambda}}(\varphi_{j_h\,k_h}).
\end{array}
\end{equation*}
Finally, by the Noether formula, one obtains the following equalities
$$
\begin{array}{l}
a=\displaystyle\sum_{h=1}^s z_{j_h\,k_h}(\varphi_{M_0},\varphi_{j_h\,k_h})_{p_{j_h\,1}}, \text{ and }\\
b=z_{j_h\,k_h}(\varphi_{F_{j_h}},\varphi_{j_h\,k_h})_{p_{j_h\,1}}, \text{ independently of the value   }h, 1\leq h \leq s,
\end{array}
$$
giving rise to the divisor $W_{(j_{1}\,k_{1}),\ldots,(j_{s}\,k_{s})}$ defined in the statement (whose class belongs to $\mathfrak{C}^\vee(X)$ by construction). This completes the proof.
\end{proof}

\begin{remark}\label{rem:valores_z_W}
Set $W_{(j_1\,k_1),\ldots,(j_s\,k_s)}$ a divisor as in Lemma \ref{lema:Dual_cone_S} and $(z_{j_{1}\,k_{1}},\ldots,z_{j_{s}\,k_{s}})$ an element in $(\mathbb{Z}_{>0})^s$ involved in the expression of  $W_{(j_1\,k_1),\ldots,(j_s\,k_s)}$. It holds that any other element $(z_{j_{1}\,k_{1}}',\ldots,z_{j_{s}\,k_{s}}')$ in $(\mathbb{Z}_{>0})^s$ satisfying the equalities $z_{j_{1}\,k_{1}}'\cdot b_{j_{1}\,k_{1}}=\cdots =z_{j_{s}\,k_{s}}'\cdot b_{j_{s}\,k_{s}}$ provides a divisor  $W_{(j_1\,k_1),\ldots,(j_s\,k_s)}' = \gamma\cdot W_{(j_1\,k_1),\ldots,(j_s\,k_s)}$, $\gamma\in \mathbb{Q}_{>0}$, and then its class generates the same ray in $\mathfrak{C}^\vee(X)$ as that determined by the class of $W_{(j_1\,k_1),\ldots,(j_s\,k_s)}$. From now on, we consider the \emph{primitive} element $(z_{j_{1}\,k_{1}},\ldots,z_{j_{s}\,k_{s}})\in (\mathbb{Z}_{>0})^s$ satisfying the equalities $z_{j_{1}\,k_{1}}\cdot b_{j_{1}\,k_{1}}=\cdots =z_{j_{s}\,k_{s}}\cdot b_{j_{s}\,k_{s}}$.
\end{remark}

\begin{remark}\label{const}
Notice that, if the configuration $\mathcal{C}$ in Lemma \ref{lema:Dual_cone_S} consists of a unique constellation, then there is no divisor of the type $W_{(j_{1}\,k_{1}),\ldots,(j_{s}\,k_{s})}$.
\end{remark}

To finish this subsection we give two examples. They consider two rational surfaces $X$ as before and, following Lemma \ref{lema:Dual_cone_S},  describe  a set of generators of the cone $\mathfrak{C}^\vee(X)$. Our first example, Example \ref{New}, is simpler than the second one, Example \ref{example_1}. The first one allows us to fully show how we perform our computations to get the generators of the mentioned cone, while the second one manifests the complexity of $\mathfrak{C}^\vee(X)$ even when one has a relatively small number of blowups. Later, both examples will be continued in Examples \ref{New2} and \ref{example_2}. Note that our final goal is to give bounds on $\delta$ implying the finite polyhedrality with minimal generation of the cone of curves NE$(X)$. In the first example, the finite polyhedrality of NE$(X)$ was known since we blow up less than eight points and, then, $K_X^2 > 0$, $K_X$ being a canonical divisor of $X$ \cite[Theorem 2]{GalMon1}. The reason to include Examples \ref{New} and \ref{New2} is that they are simple enough to explicitly describe our results.

\begin{example}\label{New}
Let $\pi:X\to\mathbb{F}_\delta$ be a sequence of blowups over a Hirzebruch surface at a configuration $\mathcal{C}=\{p_i\}_{i=1}^6$ whose arrowed proximity graph is depicted in Figure  \ref{fig:sencillo}.
\begin{figure}[h!]
        \centering
        \includegraphics[scale=0.8]{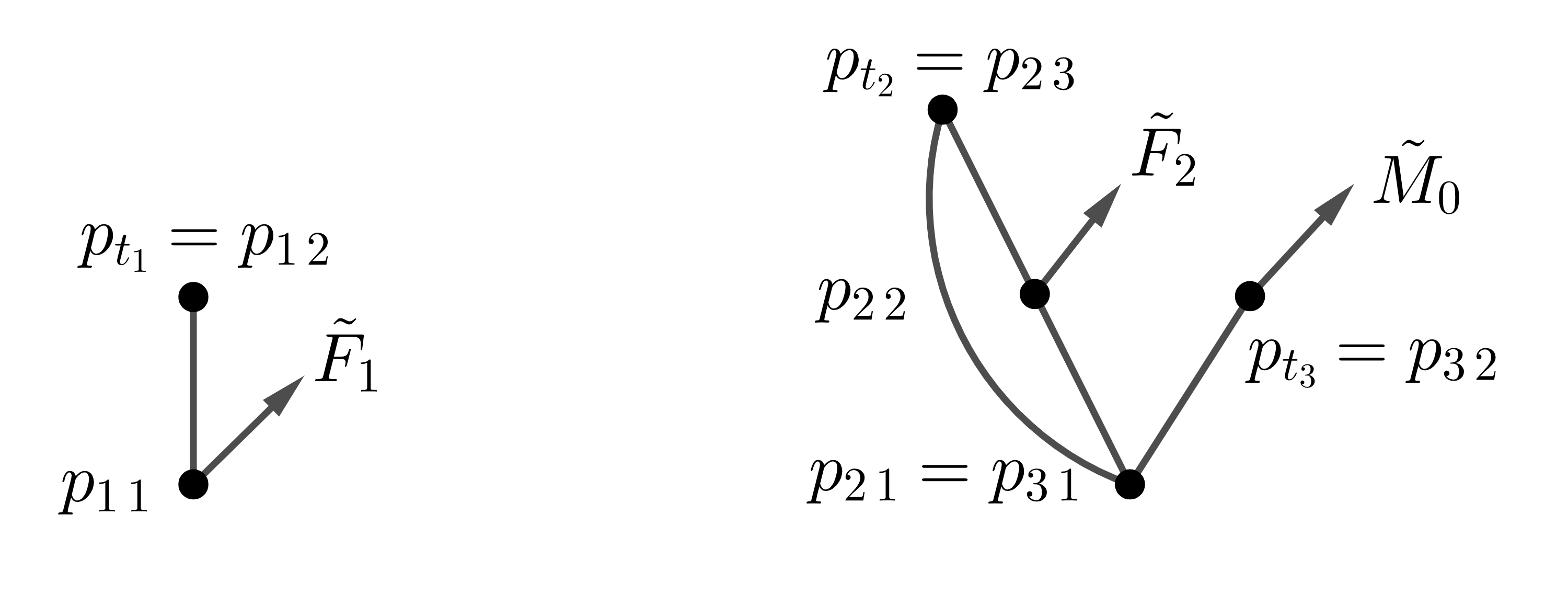}
        \caption{Arrowed proximity graph in Example \ref{New}.}
        \label{fig:sencillo}
\end{figure}
The configuration $\mathcal{C}$ is a disjoint union of two constellations $\mathcal{C}_1$ and $\mathcal{C}_2$ from the left to the right. Here the convex cone $\mathfrak{C}(X)$ introduced at the beginning of this section is spanned by the set of classes $\{[\tilde{F}_1],[\tilde{F}_2],[\tilde{M}_0],[E_1],\ldots,$ $[E_{6}]\},$ where $F_i$ is the fiber passing through the origin of the constellation $\mathcal{C}_i,i=1,2,$ and $M_0$ the special section. The maximal points of $\mathcal{C}$ are marked with a label $t_j,1\leq j\leq 3.$ Let $\nu_j:=\nu_{E_{p_{t_j}}}$ be the divisorial valuation defined by the exceptional divisor $E_{p_{t_j}},1\leq j\leq 3.$ Hence, it holds that
$$
\begin{array}{c}
F_{\nu_1}=F_1,\,F_{\nu_2}=F_2=F_{\nu_3},\, E_{2\,1}^*=E_{3\,1}^*,\\[2mm]
\tilde{F}_1\sim F_1^*-E_{1\,1}^*,\, \tilde{F}_2\sim F_2^*-E_{2\,1}^*-E_{2\,2}^*,\,\tilde{M}_0\sim M_0^*-E_{3\,1}^*-E_{3\,2}^*,
\end{array}
$$
where $\sim$ means linear equivalence. In addition, $N=6,n_1=2,n_2=3,n_3=2$ and $s=2$.

To facilitate the understanding of our results, we compute the explicit expression of the divisors indicated in Lemma \ref{lema:Dual_cone_S}. To do it, we need to calculate some values. The proximity equalities can be deduced from the arrowed proximity graph. They allow us to obtain the sets of multiplicities of the analytically irreducible germs $\varphi_{jk}$ introduced in Section \ref{Sect3}, which are the following ones:
$$
\begin{array}{c}
(\mult_{p_{1\,\lambda}}(\varphi_{1\,1}))_{\lambda=1}^2=(1,0)=(\mult_{p_{3\,\lambda}}(\varphi_{3\,1}))_{\lambda=1}^2,\\[2mm]
(\mult_{p_{1\,\lambda}}(\varphi_{1\,2}))_{\lambda=1}^2=(1,1)=(\mult_{p_{3\,\lambda}}(\varphi_{3\,2}))_{\lambda=1}^2,\\[2mm]
(\mult_{p_{2\,\lambda}}(\varphi_{2\,1}))_{\lambda=1}^3=(1,0,0), \;
(\mult_{p_{2\,\lambda}}(\varphi_{2\,2}))_{\lambda=1}^3=(1,1,0),\\[2mm]
(\mult_{p_{2\,\lambda}}(\varphi_{2\,3}))_{\lambda=1}^3=(2,1,1).\\[2mm]
\end{array}
$$
By using the Noether formula, one gets
$$
\begin{array}{c}
a_{1\,1}=(\varphi_{M_0},\varphi_{1\,1})_{p_{1\,1}}=0=a_{1\,2}=(\varphi_{M_0},\varphi_{1\,2})_{p_{1\,1}}, \ a_{2\,1}=(\varphi_{M_0},\varphi_{2\,1})_{p_{2\,1}}=1=a_{2\,2}, \\[2mm]
a_{3\,1}=(\varphi_{M_0},\varphi_{3\,1})_{p_{3\,1}}=1,\ a_{3\,2}=(\varphi_{M_0},\varphi_{3\,2})_{p_{3\,1}}=2=a_{2\,3}=(\varphi_{M_0},\varphi_{2\,3})_{p_{2\,1}},\\[2mm]
 b_{1\,1}=(\varphi_{F_{1}},\varphi_{1\,1})_{p_{1\,1}}=1= \ b_{1\,2}=(\varphi_{F_{1}},\varphi_{1\,2})_{p_{1\,1}},\\[2mm]
 b_{2\,1}=(\varphi_{F_{2}},\varphi_{2\,1})_{p_{2\,1}}=1,b_{2\,2}=(\varphi_{F_{2}},\varphi_{2\,2})_{p_{2\,1}}=2, b_{2\,3}=(\varphi_{F_{2}},\varphi_{2\,3})_{p_{2\,1}}=3,   \\[2mm] b_{3\,1}=(\varphi_{F_{2}},\varphi_{3\,1})_{p_{3\,1}}=1=b_{3\,2}=(\varphi_{F_{2}},\varphi_{3\,2})_{p_{3\,1}}.\\
 \end{array}
$$
Consequently, by Lemma \ref{lema:Dual_cone_S}, we get the following set $\mathcal{G}$ of divisors whose classes in NS$_\mathbb{R}(X)$ generate the cone $\mathfrak{C}^\vee(X)$: $F^*,M^*,$
$$
\begin{array}{rl}
\Lambda_{1\,1} =& M^*-E_{1\,1}^*.\\[2mm]
\Lambda_{1\,2} =& M^*-E_{1\,1}^*-E_{1\,2}^*.\\[2mm]
\Lambda_{2\,1} =& F^*+ M^*-E_{2\,1}^*=F^*+ M^*-E_{3\,1}^*=\Lambda_{3\,1}.\\[2mm]
\Lambda_{2\,2} =& F^*+ 2M^*-E_{2\,1}^*-E_{2\,2}^*.\\[2mm]
\Lambda_{2\,3} =& 2F^*+ 3M^*-2E_{2\,1}^*-E_{2\,2}^*-E_{2\,3}^*.\\[2mm]
\Lambda_{3\,2} =& 2F^*+ M^*-E_{3\,1}^*-E_{3\,2}^*.\\[2mm]
\end{array}
$$
$$
\begin{array}{rl}
W_{(1\,1),(2\,1)} = &z_{2\,1}F^*+z_{1\,1}M^* -z_{1\,1}E_{1\,1}^*-z_{2\,1}E_{2\,1}^*, \\
 & \text{where }z_{1\,1}\text{ and }z_{2\,1}\text{ are any two positive integers such that}\\
& z_{1\,1}b_{1\,1}=z_{2\,1}b_{2\,1} \mbox{ (that is, $z_{1\,1}=z_{2\,1}$)}. \text{ According to Remark \ref{rem:valores_z_W},}\\
 & \text{we consider $z_{1\,1}=1=z_{2\,1}$. Then our choice is}\\ &  W_{(1\,1),(2\,1)} = F^*+M^* -E_{1\,1}^*-E_{2\,1}^*. \\[1.15mm]
& \text{We only show our choice for the following divisors.}\\[1.15mm]
W_{(1\,1),(3\,1)} = &F^*+M^* -E_{1\,1}^*-E_{3\,1}^*=W_{(1\,1),(2\,1)}.\\[2mm]
W_{(1\,2),(2\,1)} = &F^*+M^* -E_{1\,1}^*-E_{1\,2}^*-E_{2\,1}^*=F^*+M^* -E_{1\,1}^*-E_{1\,2}^*-E_{3\,1}^*=W_{(1\,2),(3\,1)}.\\[2mm]
W_{(1\,1),(3\,2)} =&2F^*+M^* -E_{1\,1}^*-E_{3\,1}^*-E_{3\,2}^*.\\[2mm]
W_{(1\,2),(3\,2)} =&2F^*+M^* -E_{1\,1}^*-E_{1\,2}^*-E_{3\,1}^*-E_{3\,2}^*.\\[2mm]
W_{(1\,1),(2\,2)} =& F^*+2M^* -2E_{1\,1}^*-E_{2\,1}^*-E_{2\,2}^*\\[2mm]
 W_{(1\,2),(2\,2)} =& F^*+2M^* -2E_{1\,1}^*-2E_{1\,2}^*-E_{2\,1}^*-E_{2\,2}^*.\\[2mm]
 W_{(1\,1),(2\,3)} =& 2F^*+3M^* -3E_{1\,1}^*-2E_{2\,1}^*-E_{2\,2}^*-E_{2\,3}^*.\\[2mm]
 W_{(1\,2),(2\,3)} =& 2F^*+3M^* -3E_{1\,1}^*-3E_{1\,2}^*-2E_{2\,1}^*-E_{2\,2}^*-E_{2\,3}^*. \\[1.15mm]
\end{array}
$$
\end{example}

\begin{example}\label{example_1}
Let $\pi:X\to\mathbb{F}_\delta$ be the sequence of blowups over a Hirzebruch surface $\mathbb{F}_\delta$ considered in Example \ref{ex:grafo_proximidad}. The configuration $\mathcal{C}$  in that example is a disjoint union of three constellations $\mathcal{C}_1,\mathcal{C}_2$ and $\mathcal{C}_3$ depicted in Figure \ref{fig_conf_F_delta} from the left to the right. In this example the convex cone $\mathfrak{C}(X)$  is gene\-rated by the set of classes $\{[\tilde{F}_1],[\tilde{F}_2],[\tilde{F}_3],[\tilde{M}_0],[E_1],$ $\ldots,[E_{20}]\},$ where $F_i$ is the fiber which goes through the origin of the constellation $\mathcal{C}_i,1\leq i\leq 3,$ and $M_0$ the special section. According to our notation, we mark with a label $t_j, 1\leq j\leq 5,$ the maximal points of $\mathcal{C}$. Let $\nu_j:=\nu_{E_{p_{t_j}}} $ be the divisorial valuation defined by the exceptional divisor $E_{p_{t_j}},1\leq j\leq m= 5.$ Then:
\begin{equation*}
\begin{array}{c}
F_{\nu_1}=F_1=F_{\nu_2},\ F_{\nu_3}=F_2=F_{\nu_4},\ F_{\nu_5}=F_3,\\[2mm]
E_{1\,1}^*=E_{2\,1}^*, \ E_{1\,2}^*=E_{2\,2}^*, \ E_{1\,3}^*=E_{2\,3}^*, \ E_{3\,1}^*=E_{4\,1}^*,  \\[2mm]
\tilde{F}_1 \sim F_1^*-E_{1\,1}^*, \ \tilde{F}_2 \sim F_2^*-E_{3\,1}^* - E_{3\,2}^*, \ \tilde{F}_3 \sim F_3^*-E_{5\,1}^*-E_{5\,2}^*,\\[2mm]
\tilde{M}_0 \sim M_0^* -E_{1\,1}^* -E_{1\,2}^* -E_{1\,3}^* -E_{1\,4}^* -E_{1\,5}^* -E_{4\,1}^* -E_{4\,2}^* -E_{4\,3}^*.
\end{array}
\end{equation*}
Moreover, $N=20,n_1=7,n_2=5,n_3=4,n_4=5,n_5=3$ and $1< s \leq r=3.$

In this example there are $361$  divisors (introduced in Lemma \ref{lema:Dual_cone_S}) that span the cone $\mathfrak{C}^\vee(X)$. For this reason and to illustrate our result, we only give some of them. They are the following ones:
$$
\begin{array}{c}
\Lambda_{1\,3}\,(j=1,k=3),\Lambda_{1\,7}\,(j=1,k=7),\Lambda_{2\,3}\,(j=2,k=3),\Lambda_{3\,4}\,(j=3,k=4),\\
\Lambda_{5\,3}\,(j=5,k=3), W_{(1\,7),(3\,4)}\, (j_1=1,k_1=7,j_2=3,k_2=4) , \\
W_{(3\,4),(5\,3)}\, (j_1=3,k_1=4,j_2=5,k_2=3)\text{ and }\\
W_{(1\,7),(3\,4),(5\,3)}\,(j_1=1,k_1=7,j_2=3,k_2=4,j_3=5,k_3=3).
\end{array}
$$
The strict transform of the (analytically irreducible) germ $\varphi_{j\,k}$ at the point $p_{j\,1}$ is transversal to the exceptional divisor $E_{j\,k}$ at a general point, for those pairs $(j,k)$ belonging to the set $\{(1,3),(1,7),(2,3),(3,4), (5,3)\}$.  By the proximity equalities, we obtain the following sets of multiplicities:
$$
\begin{array}{c}
(\mult_{p_{1\,\lambda}}(\varphi_{1\,3}))_{\lambda=1}^7=(1,1,1,0,0,0,0),\,(\mult_{p_{2\,\lambda}}(\varphi_{2\,3}))_{\lambda=1}^5=(1,1,1,0,0),  \\[2mm] (\mult_{p_{1\,\lambda}}(\varphi_{1\,7}))_{\lambda=1}^7=(3,3,3,3,1,1,1),
(\mult_{p_{3\,\lambda}}(\varphi_{3\,4}))_{\lambda=1}^4=(2,2,1,1)  \\[2mm]
\text{and } (\mult_{p_{5\,\lambda}}(\varphi_{5\,3}))_{\lambda=1}^3=(2,1,1).
\end{array}
$$
In addition, by the Noether formula, it holds that
$$
\begin{array}{c}
a_{1\,3}=(\varphi_{M_0},\varphi_{1\,3})_{p_{1\,1}}=3, a_{1\,7}=(\varphi_{M_0},\varphi_{1\,7})_{p_{1\,1}}=13, \ a_{2\,3}=(\varphi_{M_0},\varphi_{2\,3})_{p_{2\,1}}=3, \\ a_{3\,4}=(\varphi_{M_0},\varphi_{3\,4})_{p_{3\,1}}=2,\ a_{5\,3}=(\varphi_{M_0},\varphi_{5\,3})_{p_{5\,1}}=0,\\[2mm]
 b_{1\,3}=(\varphi_{F_{1}},\varphi_{1\,3})_{p_{1\,1}}=1, \ b_{1\,7}=(\varphi_{F_{1}},\varphi_{1\,7})_{p_{1\,1}}=3,\ b_{2\,3}=(\varphi_{F_{1}},\varphi_{2\,3})_{p_{2\,1}}=1  \\ b_{3\,4}=(\varphi_{F_{2}},\varphi_{1\,7})_{p_{3\,1}}=4, \ b_{5\,3}=(\varphi_{F_{3}},\varphi_{5\,3})_{p_{5\,1}}=3.\\
 \end{array}
$$
Thus, one gets
$$
\begin{array}{rl}
\Lambda_{1\,3} =& 3F^*+M^*-E_{1\,1}^*-E_{1\,2}^*- E_{1\,3}^*\\=&3F^*+M^*-E_{2\,1}^*-E_{2\,2}^*- E_{2\,3}^*=\Lambda_{2\,3}.\\[2mm]
\Lambda_{1\,7} =& 13 F^*+3M^* -3E_{1\,1}^*-3E_{1\,2}^*- 3E_{1\,3}^*- 3E_{1\,4}^*-E_{1\,5}^*-E_{1\,6}^*-E_{1\,7}^*.\\[2mm]
\Lambda_{3\,4} = & 2 F^*+4M^* -2E_{3\,1}^*-2E_{3\,2}^*- E_{3\,3}^*- E_{1\,4}^*.\\[2mm]
\Lambda_{5\,3} = & 3M^* -2E_{5\,1}^*-E_{5\,2}^*- E_{5\,3}^*.\\[2mm]
W_{(1\,7),(3\,4)} = & (13z_{1\,7} + 2z_{3\,4}) F^*+3z_{1\,7}M^* -3z_{1\,7}E_{1\,1}^*-3z_{1\,7}E_{1\,2}^*- 3z_{1\,7}E_{1\,3}^* \\ &- 3z_{1\,7}E_{1\,4}^*-z_{1\,7}E_{1\,5}^*-z_{1\,7}E_{1\,6}^*-z_{1\,7}E_{1\,7}^* -2z_{3\,4}E_{3\,1}^*-2z_{3\,4}E_{3\,2}^*\\ &- z_{3\,4}E_{3\,3}^*- z_{3\,4}E_{3\,4}^*, \\
 & \text{ where }z_{1\,7}\text{ and }z_{3\,4}\text{ are any two positive integers such that } 3z_{1\,7}=4 z_{3\,4}.\\
& \text{ According to Remark \ref{rem:valores_z_W}, we consider $z_{1\,7}=4$ and $z_{3\,4}=3$. Then our}\\ & \text{ choice for } W_{(1\,7),(3\,4)} \text{ is } \\[1.15mm]
W_{(1\,7),(3\,4)} = & 58 F^*+12M^* -12 E_{1\,1}^*-12 E_{1\,2}^*- 12 E_{1\,3}^* - 12 E_{1\,4}^*-4 E_{1\,5}^*-4 E_{1\,6}^*\\ &-4E_{1\,7}^* -6 E_{3\,1}^*-6 E_{3\,2}^*- 3 E_{3\,3}^*- 3 E_{3\,4}^*.\\[1.5mm]
 &  \text{In the next two descriptions, we only show a choice.}\\[1.5mm]
W_{(3\,4),(5\,3)} = & 6 F^*+12M^* -6E_{3\,1}^*-6E_{3\,2}^*- 3E_{3\,3}^*- 3E_{3\,4}^* -8E_{5\,1}^*-4E_{5\,2}^*- 4E_{5\,3}^*.\\[2mm]
W_{(1\,7),(3\,4),(5\,3)}=& 58 F^*+12M^* -12E_{1\,1}^*-12E_{1\,2}^*- 12E_{1\,3}^* - 12E_{1\,4}^*-4E_{1\,5}^*-4E_{1\,6}^*\\ &-4E_{1\,7}^* -6E_{3\,1}^*-6E_{3\,2}^*- 3E_{3\,3}^*- 3E_{3\,4}^*-8E_{5\,1}^*-4E_{5\,2}^*- 4E_{5\,3}^*. \\[2mm]
\end{array}
$$
\end{example}

\subsection{The values $a(\operatorname{APG}(X,\pi))$ and $b'(\operatorname{APG}(X,\pi))$} \label{42}
In this subsection we introduce two nonnegative integers, which depend only on the arrowed proximity graph, and are useful in our main result. These values are denoted  by $a(\operatorname{APG}(X,\pi))$ and $b'(\operatorname{APG}(X,\pi))$. We will prove that they can be algorithmically obtained and, in particular cases, given by explicit formulae.

\begin{proposition}\label{thm:nonnegative_selfinters}
Let $X$ be a rational surface which is obtained by a composition $\pi:X\to \mathbb{F}_\delta$ of blowups at a configuration of infinitely near points over $\mathbb{F}_{\delta}$. There exists a positive integer $a(\operatorname{APG}(X,\pi)),$ which depends only on the arrowed proximity graph $\operatorname{APG}(X,\pi),$ such that, if $\delta\geq a(\operatorname{APG}(X,\pi))$, then the generators of the dual cone $\mathfrak{C}^\vee(X)$ computed in Lemma \ref{lema:Dual_cone_S} have nonnegative self-intersection.
\end{proposition}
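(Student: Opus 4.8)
The plan is to compute the self-intersection number of each generator of $\mathfrak{C}^\vee(X)$ produced by Lemma~\ref{lema:Dual_cone_S} and to read off, generator by generator, the lower bound on $\delta$ that makes it nonnegative. Write $\pi$ as a composition of $N$ point blowups, so that the classes $F^*,M^*,E_1^*,\dots,E_N^*$ form a basis of $\NS_\mathbb{R}(X)$ which is orthogonal except for $F^*\cdot M^*=1$, with self-intersections $F^{*2}=0$, $M^{*2}=\delta$ and $(E_l^*)^2=-1$. Since $a(\operatorname{APG}(X,\pi))$ will be constructed as a positive integer, under the hypothesis $\delta\geq a(\operatorname{APG}(X,\pi))$ one has $\delta\geq 1$, so $F^{*2}=0\geq 0$ and $M^{*2}=\delta>0$; thus $F^*$ and $M^*$ never impose a condition.

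For the remaining generators, the intersection rules above yield affine functions of $\delta$ with purely combinatorial coefficients:
\[
\Lambda_{j\,k}^2 = b_{j\,k}^2\,\delta + 2\,a_{j\,k}b_{j\,k} - \sum_{\lambda=1}^{k}\mult_{p_{j\,\lambda}}(\varphi_{j\,k})^2,
\]
\[
W_{(j_1\,k_1),\dots,(j_s\,k_s)}^2 = Z^2\,\delta + 2Z\sum_{h=1}^{s}z_{j_h\,k_h}a_{j_h\,k_h} - \sum_{h=1}^{s}z_{j_h\,k_h}^2\sum_{\lambda=1}^{k_h}\mult_{p_{j_h\,\lambda}}(\varphi_{j_h\,k_h})^2,
\]
where $Z:=z_{j_1\,k_1}b_{j_1\,k_1}=\dots=z_{j_s\,k_s}b_{j_s\,k_s}$, and where in the second identity the chains $\mathcal{C}_{\nu_{j_h}}$ are pairwise disjoint so that the $E_{j_h\,\lambda}^*$ occurring are distinct basis vectors. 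Because the fiber $F_j$ passes through $p_{j\,1}$ one has $b_{j\,k}=(\varphi_{F_j},\varphi_{j\,k})_{p_{j\,1}}\geq 1$, and hence $Z\geq 1$ as well; therefore both expressions have strictly positive leading coefficient in $\delta$, and each is nonnegative exactly when $\delta$ is at least the corresponding threshold, e.g.
\[
\delta \geq \frac{\sum_{\lambda=1}^{k}\mult_{p_{j\,\lambda}}(\varphi_{j\,k})^2-2\,a_{j\,k}b_{j\,k}}{b_{j\,k}^2}
\]
for $\Lambda_{j\,k}$, and the analogous quotient for each $W_{(j_1\,k_1),\dots,(j_s\,k_s)}$.

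The finiteness point is that Lemma~\ref{lema:Dual_cone_S} produces only finitely many generators: finitely many valuations $\nu_j$ and indices $k\leq n_j$, and finitely many admissible tuples $((j_1\,k_1),\dots,(j_s\,k_s))$ with $1<s\leq r$, while by Remark~\ref{rem:valores_z_W} the vector $(z_{j_h\,k_h})_{h}$ may be taken primitive, in which case it is uniquely determined by the integers $b_{j_h\,k_h}$. I would then \emph{define} $a(\operatorname{APG}(X,\pi))$ to be the smallest positive integer greater than or equal to every one of these finitely many thresholds; by construction, $\delta\geq a(\operatorname{APG}(X,\pi))$ forces all generators in Lemma~\ref{lema:Dual_cone_S} to have nonnegative self-intersection.

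Finally, to see that $a(\operatorname{APG}(X,\pi))$ depends only on the arrowed proximity graph, I would check that every ingredient of the thresholds is recovered from $\operatorname{APG}(X,\pi)$: the multiplicities $\mult_{p_{j\,\lambda}}(\varphi_{j\,k})$ are determined by the proximity equalities \eqref{proximity_equalities}, read off the underlying proximity graph; the numbers $a_{j\,k}$ and $b_{j\,k}$ follow from those multiplicities via the Noether formula once one knows through which exceptional divisors the strict transforms of $M_0$ and of the fibers $F_j$ pass, i.e. from the arrows; and the conditions selecting the admissible tuples — pairwise disjointness of the chains $\mathcal{C}_{\nu_{j_h}}$ and their origins lying on distinct fibers — as well as the primitive vectors $(z_{j_h\,k_h})_{h}$ are purely combinatorial features of $\operatorname{APG}(X,\pi)$. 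The genuinely delicate step is this last bookkeeping: making rigorous that $\{\mult_{p_{j\,\lambda}}(\varphi_{j\,k})\}$, $\{a_{j\,k}\}$, $\{b_{j\,k}\}$ and the disjointness/fiber data are functions of the arrowed proximity graph alone, with no additional input from the particular choice of blown-up infinitely near points; by contrast, the self-intersection computations themselves are mechanical once the orthogonal basis of $\NS_\mathbb{R}(X)$ is fixed.
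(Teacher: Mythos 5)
Your proposal is correct and follows essentially the same route as the paper: the paper's proof likewise writes each generator as $D=aF^*+bM^*-\sum_l c_lE_l^*$ with coefficients determined by $\operatorname{APG}(X,\pi)$, computes $D^2=2ab+b^2\delta-\sum_l c_l^2$, and takes $a(\operatorname{APG}(X,\pi))$ to be the least positive integer making all finitely many such self-intersections nonnegative. Your extra details (positivity of $b$, hence of the leading coefficient in $\delta$, and the combinatorial recovery of the multiplicities and of $a_{j\,k},b_{j\,k}$ from the proximity equalities, the Noether formula and the arrows) just make explicit what the paper asserts and defers to Remark \ref{GetA}.
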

\begin{proof}
The coefficients of the divisors shown in Lemma \ref{lema:Dual_cone_S} do not depend on the value of $\delta$. In addition, any of these divisors can be written as
$$
    D:=aF^*+bM^*-\sum_{l=1}^N c_l E_l^*,
$$
depending only on the $\operatorname{APG}(X,\pi),$ and then
$$
D^2=2ab+b^2\delta - \sum_{l=1}^N c_l^2.
$$
Therefore, it suffices to look for the lowest positive integer $a(\operatorname{APG}(X,\pi))$ such that, when $\delta\geq a(\operatorname{APG}(X,\pi))$, all the finitely many divisors defined in Lemma \ref{lema:Dual_cone_S} have nonnegative self-intersection.
\end{proof}

\begin{remark}
\label{GetA}
Given a rational number $x$, we set $\lceil x \rceil^*$ the minimum positive integer being an upper bound of $\{x\}$. The proof of Proposition \ref{thm:nonnegative_selfinters} shows that
\[
a(\operatorname{APG}(X,\pi)) = \max \left\{ \Bigg\lceil\dfrac{\sum_{l=1}^N c_l^2-2ab}{b^2}\Bigg\rceil^* : D = aF^*+bM^*-\sum_{l=1}^N c_l E_l^* \in  \tilde{\mathcal{G}}:= \mathcal{G}\setminus\{F^*\} \right\},
\]
where $\mathcal{G}$ is the finite set of divisors given in Lemma \ref{lema:Dual_cone_S}. Recall that the classes of the divisors in $\mathcal{G}$ span the cone $\mathfrak{C}^\vee(X)$ and the divisors $D \in \tilde{\mathcal{G}}$ have nonvanishing coefficient $b$.
\end{remark}

Let $X$ be a rational surface coming from a composition $\pi:X\to \mathbb{F}_\delta$ as in Proposition \ref{thm:nonnegative_selfinters}. As we said in Section \ref{Sect3}, the labeled graph obtained by deleting the arrows of $\operatorname{APG}(X,\pi)$ is named \emph{proximity graph} of $X$ and it is denoted by $\operatorname{PG}(X,\pi)$. When the ground field is $\mathbb{C}$, it can be said that the information encoded by $\operatorname{PG}(X,\pi)$ is ``topological'' in the following sense. Suppose that $\mathcal{C}$ is a disjoint union of the \emph{constellations} ${\mathcal C}_1,\ldots,{\mathcal C}_r$ and, for each $i=1,\ldots,r$,  set $\mathcal{C}_i=\cup_{j=j_{i-1}+1}^{j_i} {\mathcal C}_{\nu_{j}}$ for suitable indices $0=j_0<j_1<\cdots <j_r=m$, the decomposition of $\mathcal{C}_i$ as union of chains attached to the valuations $\nu_{j}$ defined by the final exceptional divisors (as in \eqref{val}). Let $\phi_{j}$ be a general element of the valuation $\nu_{j}$ and consider the germ at the origin of $\mathcal{C}_i$ in $\mathbb{F}_\delta$ given by $\psi_i:=\prod_{j=j_{i-1}+1}^{j_i} \phi_{j}$. Then the proximity graph $\operatorname{PG}(X,\pi)$ depends only on $(S_1,\ldots, S_r)$, where $S_i$ denotes the topological type of the singularity defined by $\psi_i$. Our next result shows that the ``topology'' is not enough to determine the value $a(\operatorname{APG}(X,\pi))$ provided by Proposition \ref{thm:nonnegative_selfinters}. We also prove that $a(\operatorname{APG}(X,\pi))$ can be explicitly computed in some cases.

\begin{corollary}\label{cor:delta0}
Keep the notation introduced previously.
\begin{itemize}
\item[(a)] Consider a family $\mathcal{F}$ of rational surfaces $X$ formed by a composition of blowups $\pi:X\to \mathbb{F}_\delta$ as in Proposition \ref{thm:nonnegative_selfinters}, but having the same proximity graph $\operatorname{PG}(X,\pi)$. Then $\{a(\operatorname{APG}(X,\pi))\,|\, X \in \mathcal{F}\}$ is a finite set whose values depend not only on $\operatorname{PG}(X,\pi)$ but on the number of (necessarily free) points of the configuration $\mathcal{C}$ (given by $\pi$) through which the strict transforms of the fibers and the special section of $\mathbb{F}_\delta$ pass.
\item[(b)] Suppose that the configuration $\mathcal{C}$ has only free points and assume that, for any divisorial valuation $\nu_j$ defined by a final exceptional divisor of $\mathcal{C}$, $\nu_j(\varphi_{M_0})=0$ and $\nu_j(\varphi_{F_j})=1$, $F_j$ being the fiber of $\mathbb{F}_\delta$ going through the origin of $\mathcal{C}_{\nu_j}$. Then,
    $$
    a(\operatorname{APG}(X,\pi))=\max\Bigg\lbrace\sum_{h=1}^s \# (\mathcal{C}_{\nu_{j_h}}) \ \bigg| \  {\footnotesize \begin{array}{l}
       \text{$\big\lbrace \mathcal{C}_{\nu_{j_h}} \big\rbrace_{h=1}^s$ is a set of pairwise}\\ \text{disjoint chains whose origins} \\ \text{belong to different fibers}\\
    \end{array}}\Bigg\rbrace,
    $$
  where $\# (\mathcal{C}_{\nu_{j_h}})$ stands for the cardinality of $\mathcal{C}_{\nu_{j_h}}$.

    \item[(c)] Suppose that $\mathcal{C}$ is constellation. Then,
    $$a(\operatorname{APG}(X,\pi))=\max\Bigg\lbrace \Bigg\lceil\dfrac{\sum_{\lambda=1}^{n_{j}}\mult_{p_{j\,\lambda}}(\varphi_{j\,n_{j}})^2-2a_{j\,n_j}\,b_{j\,n_j}}{b_{j\,n_j}^2}\Bigg\rceil^* \ \bigg| \  1\leq j\leq m
\Bigg\rbrace,
     $$
where $\lceil x \rceil^*$ is as defined in Remark \ref{GetA}.
\end{itemize}

\end{corollary}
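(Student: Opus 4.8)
The plan is to base all three parts on the explicit description of $a(\operatorname{APG}(X,\pi))$ in Remark \ref{GetA}. Writing each divisor of the generating set $\mathcal{G}$ of Lemma \ref{lema:Dual_cone_S} as $D = aF^* + bM^* - \sum_{l=1}^N c_l E_l^*$, one has $D^2 = 2ab + b^2\delta - \sum_l c_l^2$; since $\delta$ is a positive integer and $\delta \geq \lceil x \rceil^*$ is equivalent to $\delta \geq x$ for such $\delta$, Remark \ref{GetA} reads
\[
a(\operatorname{APG}(X,\pi)) = \max\left\{\left\lceil \tfrac{\sum_{l=1}^N c_l^2 - 2ab}{b^2}\right\rceil^* \;:\; D = aF^* + bM^* - \textstyle\sum_{l} c_l E_l^* \in \tilde{\mathcal{G}} := \mathcal{G}\setminus\{F^*\}\right\}.
\]
So the task reduces to reading off the coefficients $a,b,c_l$ of the divisors $\Lambda_{j\,k}$ and $W_{(j_1\,k_1),\ldots,(j_s\,k_s)}$ from $\operatorname{APG}(X,\pi)$ and bounding the resulting quantities.

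For part (a) I would first note that the multiplicities $\mult_{p_{j\,\lambda}}(\varphi_{j\,k})$ are fixed by the proximity equalities \eqref{proximity_equalities}, hence by $\operatorname{PG}(X,\pi)$, while the integers $a_{j\,k} = (\varphi_{M_0},\varphi_{j\,k})_{p_{j\,1}}$ and $b_{j\,k} = (\varphi_{F_j},\varphi_{j\,k})_{p_{j\,1}}$ are, by the Noether formula, determined once one knows through which points of $\mathcal{C}$ the strict transforms of $M_0$ and of the fibers $F_i$ pass — that is, once one knows the arrows. Since $M_0$ and the fibers are smooth, the strict transform of each of them runs along an initial segment of consecutive, pairwise proximate \emph{free} points of $\mathcal{C}$: a smooth germ has a single tangent direction at every infinitely near point, so once its strict transform meets the newly created exceptional curve transversally it can never become proximate to an earlier one, hence it never reaches a satellite point. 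Therefore, for a fixed $\operatorname{PG}(X,\pi)$ each arrow points to one of the finitely many vertices of a chain (or is absent), there are only finitely many arrowed proximity graphs with that $\operatorname{PG}(X,\pi)$, and by the displayed formula only finitely many values $a(\operatorname{APG}(X,\pi))$. The only ingredient of the formula not fixed by $\operatorname{PG}(X,\pi)$ being the arrows — equivalently, the numbers of free points of $\mathcal{C}$ on the strict transforms of the fibers and of $M_0$ — the dependence is exactly on $\operatorname{PG}(X,\pi)$ together with these numbers, and that this dependence is genuine is exhibited by the formulas of (b), (c) and by Example \ref{ex:a_b_different}.

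For part (b) I would use the hypotheses to pin down all the coefficients: as $\mathcal{C}$ has only free points, each $\varphi_{j\,k}$ is smooth and $\mult_{p_{j\,\lambda}}(\varphi_{j\,k}) = 1$ for $1 \leq \lambda \leq k$ and $0$ otherwise; $\nu_j(\varphi_{F_j}) = 1$ together with the Noether formula forces $F_j$ to pass only through the origin $p_{j\,1}$, so $b_{j\,k} = (\varphi_{F_j},\varphi_{j\,k})_{p_{j\,1}} = 1$ for every $k$; and $\nu_j(\varphi_{M_0}) = 0$ forces the strict transform of $M_0$ to miss the chain $\mathcal{C}_{\nu_j}$, so $a_{j\,k} = 0$. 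Substituting into Lemma \ref{lema:Dual_cone_S} gives $\Lambda_{j\,k} = M^* - \sum_{\lambda=1}^k E_{j\,\lambda}^*$, hence $\Lambda_{j\,k}^2 = \delta - k$; the primitive tuple attached to a $W$-divisor is $(1,\ldots,1)$ since all $b_{j_h\,k_h} = 1$, so $W_{(j_1\,k_1),\ldots,(j_s\,k_s)} = M^* - \sum_{h=1}^s\sum_{\lambda=1}^{k_h} E_{j_h\,\lambda}^*$ and $W_{(j_1\,k_1),\ldots,(j_s\,k_s)}^2 = \delta - \sum_{h=1}^s k_h$; while $M^{*2} = \delta$ imposes nothing. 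The most restrictive requirement is then $\delta \geq \sum_{h=1}^s k_h$ with $k_h = n_{j_h} = \#(\mathcal{C}_{\nu_{j_h}})$ — the $\Lambda_{j\,k}$ being the instances with $s=1$ — which is precisely the claimed maximum over families $\{\mathcal{C}_{\nu_{j_h}}\}_{h=1}^s$ of pairwise disjoint chains with origins on different fibers.

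For part (c), since $\mathcal{C}$ is a single constellation, Remark \ref{const} gives $\tilde{\mathcal{G}} = \{M^*\}\cup\{\Lambda_{j\,k}: 1\leq j\leq m,\ 1\leq k\leq n_j\}$; from $\Lambda_{j\,k}^2 = 2a_{j\,k}b_{j\,k} + b_{j\,k}^2\delta - \sum_{\lambda=1}^k \mult_{p_{j\,\lambda}}(\varphi_{j\,k})^2$ and $M^{*2}=\delta$, Remark \ref{GetA} yields $a(\operatorname{APG}(X,\pi)) = \max_{j,k}\lceil f_j(k)\rceil^*$, where $f_j(k) := \big(\sum_{\lambda=1}^k \mult_{p_{j\,\lambda}}(\varphi_{j\,k})^2 - 2a_{j\,k}b_{j\,k}\big)/b_{j\,k}^2$ (the contribution of $M^*$ being $1$, which is $\leq$ every other term). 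The claimed identity is this with the inner maximum taken only over $k = n_j$, so it remains to prove $\lceil f_j(k)\rceil^* \leq \max_{j'}\lceil f_{j'}(n_{j'})\rceil^*$ for all $j$ and all $k < n_j$. \textbf{This reduction to the final exceptional divisors is the main obstacle}: $k\mapsto f_j(k)$ need not be monotone (one may have $f_j(k)>f_j(n_j)$), so one must compare $\Lambda_{j\,k}^2$ and $\Lambda_{j\,n_j}^2$ directly. I would proceed by observing that $\Lambda_{j\,k}$ coincides with the terminal divisor $\Lambda_k(\nu_{E_{p_{j\,k}}})$ of the valuation defined by $E_{p_{j\,k}}$, that blowing up the further points $p_{j\,k+1},\ldots,p_{j\,n_j}$ can only increase, via the proximity equalities, the relevant multiplicities and intersection numbers, so that $\mult_{p_{j\,\lambda}}(\varphi_{j\,k})\leq \mult_{p_{j\,\lambda}}(\varphi_{j\,n_j})$ for $\lambda\leq k$, $a_{j\,k}\leq a_{j\,n_j}$ and $b_{j\,k}\leq b_{j\,n_j}$, and then showing that in
\[
\Lambda_{j\,n_j}^2 - \Lambda_{j\,k}^2 = (b_{j\,n_j}^2 - b_{j\,k}^2)\delta + 2(a_{j\,n_j}b_{j\,n_j} - a_{j\,k}b_{j\,k}) - \Big(\textstyle\sum_{\lambda=1}^{n_j}\mult_{p_{j\,\lambda}}(\varphi_{j\,n_j})^2 - \sum_{\lambda=1}^{k}\mult_{p_{j\,\lambda}}(\varphi_{j\,k})^2\Big)
\]
the nonnegative factor $b_{j\,n_j}^2 - b_{j\,k}^2$ absorbs the discrepancy once $\delta\geq\lceil f_j(n_j)\rceil^*$, so that $\Lambda_{j\,n_j}^2\geq 0$ already forces $\Lambda_{j\,k}^2\geq 0$; this last inequality is a numerical computation along the chain $\mathcal{C}_{\nu_j}$ carried out through the proximity equalities and the maximal-contact/volume description of $\sum_\lambda \mult_{p_{j\,\lambda}}(\varphi_{j\,k})^2$. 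Granting this, the inner maximum is attained at $k = n_j$ and (c) follows.
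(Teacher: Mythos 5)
Your proposal follows essentially the same route as the paper for all three parts: everything is reduced to the formula of Remark \ref{GetA} applied to the explicit generators of Lemma \ref{lema:Dual_cone_S}; for (a) one observes that the arrows can only sit on free points and admit finitely many positions for a fixed $\operatorname{PG}(X,\pi)$; for (b) the hypotheses force $a_{j\,k}=0$, $b_{j\,k}=1$ and all nonzero $c_l=1$, so the binding constraint comes from the divisors with the largest exceptional support; for (c) Remark \ref{const} restricts the generators to $F^*$, $M^*$ and the $\Lambda_{j\,k}$. The one place where you and the paper genuinely diverge is the crux of (c), the reduction of the inner maximum to $k=n_j$: the paper disposes of it in one line by citing \cite[Lemma 3.4]{GalMonMor}, which asserts precisely that $\Lambda_{j\,n_j}^2\geq 0$ implies $\Lambda_{j\,k}^2\geq 0$ for all $k<n_j$, whereas you correctly isolate this as the main obstacle but leave it as a sketch. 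As written, the sketch is not yet a proof: the proposed mechanism of the nonnegative factor $b_{j\,n_j}^2-b_{j\,k}^2$ ``absorbing'' the discrepancy breaks down whenever $b_{j\,k}=b_{j\,n_j}$ (e.g.\ when $F_j$ meets the chain only at its origin), and in that regime one must instead verify directly that $\Lambda_{j\,n_j}^2-\Lambda_{j\,k}^2\leq 0$, which is a different (though true) inequality. So the statement you need is exactly the externally cited lemma, and either that citation or a complete proof of it along the lines of the paper's Lemma \ref{lemma_anticanonical_div} (maximal contact values, case analysis on whether $p_{j\,n_j}$ is free or satellite) is required to close part (c).
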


\begin{proof}
We start by showing Statement (a). As we said in the proof of Proposition \ref{thm:nonnegative_selfinters}, any divisor $D$ considered in Lemma \ref{lema:Dual_cone_S} can be written as $D=aF^*+bM^*-\sum_{l=1}^N c_l E_l^*$  and satisfies $
D^2=2ab+b^2\delta - \sum_{l=1}^N c_l^2$. By Remark \ref{GetA}, the value $a(\operatorname{APG}(X,\pi))$ introduced in Proposition \ref{thm:nonnegative_selfinters} is the maximum of the values
\begin{equation}\label{eq:Cor_delta0}
\Bigg\lceil\dfrac{\sum_{l=1}^N c_l^2-2ab}{b^2}\Bigg\rceil^*,
\end{equation}
where $aF^*+bM^*-\sum_{l=1}^N c_l E_l^*$ runs over $\mathcal{G}\setminus\{F^*\}$, $\mathcal{G}$ being the set of divisors provided in Lemma \ref{lema:Dual_cone_S}.

By Lemma \ref{lema:Dual_cone_S}, the values $a$ are a sum of nonnegative integers depending on local intersection pro\-ducts as $(\varphi_{M_0},\varphi)_p$, $p$ being an origin of the configuration $\mathcal{C}$ defined by $\pi$ and $\varphi$ a suitable analytically irreducible germ at $p$. The values $b$ are positive integers depending on local intersection products as $(\varphi_{F},\varphi)_p$, where $F$ is the fiber going through $p.$ By the Noether formula and since $X \in \mathcal{F}$, the values  $(\varphi_{M_0},\varphi)_p$ and $(\varphi_{F},\varphi)_p$ depend on the number of points in $\mathcal{C}$ through which the strict transforms of the fibers $F_i$ and the special section $M_0$ go, values which are determined by the arrows in $\operatorname{APG}(X,\pi)$. Notice that $F_i$ and $M_0$ go through free points and there are finitely many possibilities for the positions of the arrows. This proves the first statement because when $X\in\mathcal{F}$ only the products $(\varphi_{F},\varphi)_p$ can modify the values $a$, $b$ and $c_l$.

Let us prove (b). As the points $p_l$ in $\mathcal{C}$ are free, the nonvanishing coefficients $c_l$ of the divisors $D$ in Lemma \ref{lema:Dual_cone_S} are equal to $1$. Hence, any of these divisors $D$ with nonempty exceptional support has the form $M^*-\sum_{i=1}^N  c_l E_l^*$, where $c_l$ equals either $0$ or $1$. As a consequence, the value $a(\operatorname{APG}(X,\pi))$ is attained by some divisors $W_{(j_1,n_{j_1}),\ldots,(j_s,n_{j_s})}$ introduced in Lemma \ref{lema:Dual_cone_S}. This is because the support of this type of divisors of $\mathfrak{C}^\vee(X)$ contains the maximum number of exceptional divisors. This completes the proof of (b).

Finally, we show (c). By Lemma \ref{lema:Dual_cone_S} and Remark \ref{const}, $\mathfrak{C}^\vee(X)$ is just generated by the classes of the divisors $F^*,M^*$ and $\Lambda_{j\,k}$. Moreover, according to \cite[Lemma 3.4]{GalMonMor}, if the divisor $\Lambda_{j\,n_j}$ satisfies $\Lambda_{j\,n_j}^2\geq 0$ then $\Lambda_{j\,k}^2\geq 0$ for all $k\in\{1,\ldots, n_{j}-1\}$. As a consequence, computing $a(\operatorname{APG}(X,\pi))$ is reduced to find the lowest positive integer such that all the divisors $\Lambda_{j\,n_j},1\leq j\leq m,$ have nonnegative self-intersection, which finishes the proof.
\end{proof}

\begin{remark}\label{re:calculo_a_apartado_b_corolario}
Let $\mathcal{C}$ be a configuration as in Statement (b) of Corollary \ref{cor:delta0}. To get the maximum giving rise to $a(\operatorname{APG}(X,\pi))$, one does not need to consider all the valuations $\nu_{j_h}$. Within each constellation of $\mathcal{C}$ and among their chains given by the final exceptional divisors, it suffices to take that associated to a valuation $\nu_{j_{h'}}$ with largest value $n_{j_{h'}}$. Then, $a(\operatorname{APG}(X,\pi))$ can be computed with the same formula as in Statement (b) but applied to this smaller set of valuations we have just described.
\end{remark}

Let $-K_{\mathbb{F}_\delta}$ be an anticanonical divisor on the Hirzebruch surface $\mathbb{F}_\delta$. By \cite[Chapter V, Section 2, Corollary 2.11]{Har},  $-K_{\mathbb{F}_\delta}$ is linearly equivalent to the divisor $\left(2-\delta\right)F+2M$. Set $X$ a rational surface obtained by a composition of blowups $\pi:X\to\mathbb{F}_\delta$ given by a configuration ${\mathcal C} = \{ p_l\}_{l=1}^{N}$. Then, by \cite[Proposition 1.1.26]{Alb}, any anticanonical divisor $-K_X$ on the surface $X$ is linearly equivalent to the divisor $-K_{\mathbb{F}_\delta}^*-\sum_{i=1}^N E_i^*$.
Our next result involves this last divisor.

\begin{proposition}\label{prop:K_Xposi_inter}
Let $X$ be a rational surface as in Proposition \ref{thm:nonnegative_selfinters}. Then, there exists a nonnegative integer $b'(\operatorname{APG}(X,\pi))$, which depends only on the arrowed proximity graph $\operatorname{APG}(X,\pi)$, such that, if $\delta\geq b'(\operatorname{APG}(X,\pi))$, then the intersection product of the class of $-K_X$ with any generator of $\mathfrak{C}^\vee(X)$ is positive.
\end{proposition}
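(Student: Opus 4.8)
The plan is to evaluate the intersection number $(-K_X)\cdot D$ for every generator $D$ of $\mathfrak{C}^\vee(X)$ listed in Lemma \ref{lema:Dual_cone_S}, to recognise it as an affine function of $\delta$ with non-negative slope, and then to take $b'(\operatorname{APG}(X,\pi))$ to be the largest threshold past which all of these finitely many functions are positive.

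Recall the intersection theory on $X$: the total transforms satisfy $F^{*2}=0$, $F^*\cdot M^*=1$, $M^{*2}=\delta$, $E_i^*\cdot F^*=E_i^*\cdot M^*=0$, $E_i^{*2}=-1$ and $E_i^*\cdot E_j^*=0$ for $i\ne j$, and, as recalled just before the statement, $-K_X$ is linearly equivalent to $(2-\delta)F^*+2M^*-\sum_{i=1}^N E_i^*$. By Lemma \ref{lema:Dual_cone_S}, every generator of $\mathfrak{C}^\vee(X)$ can be written $D=aF^*+bM^*-\sum_{l=1}^N c_l E_l^*$ with $a,b,c_l$ non-negative integers; moreover $b\ge 1$ for all of them except $F^*$, since $b=b_{j\,k}=(\varphi_{F_j},\varphi_{j\,k})_{p_{j\,1}}\ge 1$ for the divisors $\Lambda_{j\,k}$ (as $F_j$ passes through $p_{j\,1}$), $b=z_{j_1\,k_1}b_{j_1\,k_1}\ge 1$ for the divisors $W_{(j_1\,k_1),\dots,(j_s\,k_s)}$, and $b=1$ for $M^*$. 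A direct computation then yields
$$(-K_X)\cdot D = b\delta + 2(a+b) - \sum_{l=1}^N c_l.$$
For $D=F^*$ this equals $2>0$ and for $D=M^*$ it equals $\delta+2>0$, independently of the bound needed. For every other generator the right-hand side is affine in $\delta$ with positive slope $b\ge 1$, so it becomes (and stays) positive once $\delta$ exceeds $(\sum_l c_l-2(a+b))/b$.

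As noted in the proof of Proposition \ref{thm:nonnegative_selfinters}, the coefficients $a,b,c_l$ of each generator depend only on $\operatorname{APG}(X,\pi)$, and the set $\mathcal{G}$ of generators produced by Lemma \ref{lema:Dual_cone_S} (with the primitive choice of the tuples $(z_{j_h\,k_h})$ fixed in Remark \ref{rem:valores_z_W}) is finite. Hence it suffices to define $b'(\operatorname{APG}(X,\pi))$ as the maximum, over all $D\in\mathcal{G}\setminus\{F^*\}$, of the least non-negative integer $\delta_0$ with $b\delta_0+2(a+b)-\sum_l c_l>0$; this integer depends only on $\operatorname{APG}(X,\pi)$ and clearly has the required property.

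The argument is essentially a finite computation together with the monotonicity in $\delta$ of these intersection numbers, so there is no serious obstacle. The only mildly delicate points are verifying that the slope $b$ is strictly positive (not merely non-negative) for every generator other than $F^*$, so that positivity is eventually reached, and the bookkeeping over the a priori numerous divisors $W_{(j_1\,k_1),\dots,(j_s\,k_s)}$ when taking the maximum.
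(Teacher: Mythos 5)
Your proposal is correct and follows essentially the same route as the paper: write each generator from Lemma \ref{lema:Dual_cone_S} as $D=aF^*+bM^*-\sum_l c_l E_l^*$, compute $D\cdot(-K_X)=2a+b(2+\delta)-\sum_l c_l$ as an affine function of $\delta$, and take $b'(\operatorname{APG}(X,\pi))$ as the maximum threshold over the finitely many generators. Your explicit check that the slope $b$ is strictly positive for every generator other than $F^*$ (and that $F^*\cdot(-K_X)=2>0$) is a small refinement of the paper's argument, which matches the exclusion of $F^*$ made in Remark \ref{GetBp}.
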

\begin{proof}
As we mentioned before, the coefficients of the divisors shown in Lemma \ref{lema:Dual_cone_S} do not depend on the value of $\delta$ and these divisors can be expressed as $D=aF^*+bM^*-\sum_{l=1}^N c_l E_l^*$. Thus,
\begin{equation}\label{Eq_proposition_bX}
D\cdot (- K_{X}) = 2a + b(2 + \delta)-\sum_{l=1}^N c_l.
\end{equation}
Since we have finitely many divisors $D$, the lowest value of $\delta$ such that all the intersection products $D\cdot (- K_{X})$ are positive defines $b'(\operatorname{APG}(X,\pi))$.
\end{proof}

\begin{remark}
\label{GetBp}
Given a rational number $x$, we define $\lceil x \rceil^+$ the ceil of $x$ if $x\geq 0$ and $0$ otherwise.  The proof of Proposition \ref{prop:K_Xposi_inter} shows that
\[
b'(\operatorname{APG}(X,\pi)) = \max \left\{ \Bigg\lceil\dfrac{\sum_{l=1}^N c_l-2a}{b} - 2\Bigg\rceil^+ \;:\; D = aF^*+bM^*-\sum_{l=1}^N c_l E_l^* \in  \mathcal{G}\setminus\{F^*\}  \right\},
\]
where $\mathcal{G} $ is the finite set of divisors given in Lemma \ref{lema:Dual_cone_S} whose classes generate the cone $\mathfrak{C}^\vee(X)$.
\end{remark}

Our next step is to prove that the \emph{value} $b'(\operatorname{APG}(X,\pi))$ defined in the above proof satisfies similar properties to those described for $a(\operatorname{APG}(X,\pi))$ in Corollary \ref{cor:delta0}. We start by stating two lemmas.

\begin{lemma}\label{lem:_seq_valo_betabarras}
Let $\nu$ be a divisorial valuation of $\mathbb{F}_\delta$ and $\mathcal{C_\nu}=\{p_l\}_{l=1}^n$ its configuration of infinitely near points. Set $(\nu(\mathfrak{m}_l))_{l=1}^n$ the sequence of values of $\nu$ and $(\overline{\beta}_w)_{w=0}^{g+1}$ the sequence of maximal contact values of $\nu$. Then it holds that
$$
\sum_{l=1}^n\nu(\mathfrak{m}_l)= \overline{\beta}_{g+1}+\sum_{w=0}^{g}\overline{\beta}_w(1-N_w)+\overline{\beta}_0-1,    
$$
where $N_0=1$ and $N_w=e_{w-1}/e_w,$ with $e_w=\gcd\left(\overline{\beta}_0,\ldots,\overline{\beta}_w\right)$ for $1\leq w\leq g$.

\end{lemma}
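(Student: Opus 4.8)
The plan is to reduce the identity to two classical facts about a general element $\varphi_n$ of the divisorial valuation $\nu$ and then to combine them by an elementary manipulation. Write $v_l:=\nu(\mathfrak m_l)=\mult_{p_l}(\varphi_n)$ for $1\le l\le n$, so that the quantity to be computed is $\sum_{l=1}^n v_l$. The starting observation is the elementary identity
\[
\sum_{l=1}^n v_l=\sum_{l=1}^n v_l^2-\sum_{l=1}^n v_l(v_l-1),
\]
which replaces the problem by the computation of the two sums on the right-hand side.

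\textbf{First ingredient.} I would record that $\sum_{l=1}^n v_l^2=\overline{\beta}_{g+1}$. This follows from the Noether formula (the Proposition above) applied to a general curvette $C$ of the last exceptional divisor $E_{p_n}$: such a $C$ has $\mult_{p_l}(\varphi_C)=v_l$ for every $l$, hence $\nu(\varphi_C)=\sum_l v_l^2$, while on the other hand $\nu(\varphi_C)=\overline{\beta}_{g+1}=[\vol(\nu)]^{-1}$ because the value of a general element of $\nu$ is exactly $\overline{\beta}_{g+1}$ (see \cite[Chapter 8]{Cas}, and cf. \cite[Remark 2.3]{GalMonMoyNic2} for the identification with the inverse volume).

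\textbf{Second ingredient.} Next, $\sum_{l=1}^n v_l(v_l-1)=2\delta(\varphi_n)$, twice the delta-invariant of the (analytically irreducible) general element $\varphi_n$; this is the classical formula $\delta=\sum_{p}\binom{\mult_p(\varphi_n)}{2}$, the sum running over the infinitely near points of the branch, i.e. over $\mathcal C_\nu$ \cite[Chapter 8]{Cas}. Since the sequence of maximal contact values $(\overline{\beta}_w)_{w=0}^{g}$ is, by definition, the minimal set of generators of the semigroup of values of $\varphi_n$, the conductor $c$ of that numerical semigroup is $c=\sum_{w=1}^{g}(N_w-1)\overline{\beta}_w-\overline{\beta}_0+1$ with $N_w=e_{w-1}/e_w$ as in the statement, and $2\delta(\varphi_n)=c$ (this is the Zariski conductor formula for a plane branch; see \cite[Chapter 5]{Cas} or \cite{ZarSam}). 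Combining the three displays,
\[
\sum_{l=1}^n v_l=\overline{\beta}_{g+1}-\Big(\sum_{w=1}^{g}(N_w-1)\overline{\beta}_w-\overline{\beta}_0+1\Big)=\overline{\beta}_{g+1}+\sum_{w=1}^{g}(1-N_w)\overline{\beta}_w+\overline{\beta}_0-1,
\]
and since $N_0=1$ the term $w=0$ may be included in the sum without changing its value, which is precisely the asserted equality.

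\textbf{Main obstacle.} With the above citations the argument is essentially a one-line combination; the only delicate point is to make sure that the normalisations of $\overline{\beta}_{g+1}$, of the $e_w$ and of the $N_w$ coincide with those fixed via \cite[(1.5.3)]{DelGalNun}. Should one prefer a self-contained proof, the alternative is an induction on $g$ (the number of Puiseux characteristic exponents): peel off the last rupture block, reduce $\nu$ to a valuation with one fewer characteristic exponent, and use that between two consecutive rupture points the values $v_l$ are produced by the subtractive Euclidean algorithm, so that their partial sums telescope. This route is more elementary but notationally heavier, and I would only fall back on it if reconciling the conventions turns out to be awkward.
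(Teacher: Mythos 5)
Your argument is correct, but it takes a genuinely different route from the paper's. The paper works with the characteristic exponents $\beta_w$ of $\nu$: it quotes from Casas-Alvero (Exercise 5.6) the identity $\sum_{l=1}^n\nu(\mathfrak{m}_l)=\overline{\beta}_{g+1}-N_g\overline{\beta}_g+\beta_g+\beta_0-1$ and then unfolds the recursion $\beta_w=\beta_{w-1}+\overline{\beta}_w-N_{w-1}\overline{\beta}_{w-1}$ (with $\beta_0=\overline{\beta}_0$) to reach the stated formula. You instead write the left-hand side as $\sum_l v_l^2-\sum_l v_l(v_l-1)$, identify the first sum with $\overline{\beta}_{g+1}=[\vol(\nu)]^{-1}$ via the Noether formula applied to a second general curvette, and the second sum with twice the delta-invariant of the curvette, which by Gorenstein symmetry equals the conductor $\sum_{w=1}^g(N_w-1)\overline{\beta}_w-\overline{\beta}_0+1$ of the curvette's semigroup; the combination is then immediate. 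Your route is conceptually appealing (it explains the formula as ``inverse volume minus conductor'' and avoids characteristic exponents entirely), while the paper's stays inside the numerics of the valuation and needs a single external reference. The one point where you are too quick is the phrase ``by definition'': in the paper the data $\overline{\beta}_w$, $e_w$, $N_w$ are attached to the valuation $\nu$, not to the branch $\varphi_n$, so the fact that the semigroup of a general curvette is exactly the one generated by $\overline{\beta}_0,\dots,\overline{\beta}_g$ (equivalently, that the curvette has multiplicity sequence $(\nu(\mathfrak{m}_l))_{l=1}^n$ followed by $1$'s and hence the corresponding equisingularity type and conductor) is a classical statement that must be cited or proved, not a definition — precisely the normalisation issue you flag yourself. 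With that fact invoked, together with the standard identities $\overline{\beta}_{g+1}=\sum_{l=1}^n\nu(\mathfrak{m}_l)^2$ and $2\delta=c$ for plane branches, your proof is complete and correct.
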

\begin{proof}
Set $(\beta_0,\ldots,\beta_{g+1})$ the sequence of characteristic exponents of the divisorial valuation $\nu$ (see \cite[(1.5.3)]{DelGalNun}). It holds that $\beta_0=\overline{\beta}_0$ and
\begin{equation}\label{eq:characteristic_exp_betabarra}
\beta_w=\beta_{w-1}+\overline{\beta}_w-N_{w-1}\overline{\beta}_{w-1}, \text{ for } 1\leq w\leq g+1,
\end{equation}
where $N_0=1.$ In addition, $\overline{\beta}_{g+1}-N_g\overline{\beta}_g$ is the number of free points after the last satellite point of $\mathcal{C}_\nu$. Then, by \cite[Exercise 5.6]{Cas},  one gets that
$$
\sum_{l=1}^n\nu(\mathfrak{m}_l)=\overline{\beta}_{g+1}-N_g\overline{\beta}_g+\beta_g +\beta_0-1.
$$
To obtain the desired formula it suffices to use Equality \eqref{eq:characteristic_exp_betabarra} recursively and the fact that $\beta_0=\overline{\beta}_0$.

\end{proof}

\begin{lemma}\label{lemma_anticanonical_div}
Let $X$ be a rational surface obtained by blowing up at a constellation $\mathcal{C}$ of infinitely near points over a Hirzebruch surface $\mathbb{F}_\delta.$ Set $\Lambda_{j\,k},1\leq k\leq n_j$ and $1\leq j\leq m,$ the divisors defined in Lemma \ref{lema:Dual_cone_S}. Let $-K_{X}$ be an anticanonical divisor on $X.$

 If  $\Lambda_{j\,n_j}\cdot (-K_{X})>0$ holds for all $j\in\{1,\ldots,m\}$, then $\Lambda_{j\,k} \cdot (-K_{X})>0$ for all $k\in\{1,\ldots,n_j\}$ and $j\in\{1,\ldots,m\}$.
\end{lemma}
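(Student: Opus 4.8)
The plan is to turn the statement into an explicit numerical inequality attached to a single chain and to exploit the recursive behaviour of the multiplicities along it; this runs exactly parallel to the self-intersection statement \cite[Lemma 3.4]{GalMonMor}. Fix $j\in\{1,\ldots,m\}$. Using $-K_X\sim -K_{\mathbb{F}_\delta}^*-\sum_{i=1}^{N}E_i^*$, $-K_{\mathbb{F}_\delta}\sim 2M_0+(2+\delta)F$ and the intersection numbers on $X$ (which give $\Lambda_{j\,k}\cdot F^*=b_{j\,k}$, $\Lambda_{j\,k}\cdot M_0^*=a_{j\,k}$ and $\Lambda_{j\,k}\cdot E_l^*=\mult_{p_l}(\varphi_{j\,k})$), one obtains for every $k\in\{1,\ldots,n_j\}$
$$
\Lambda_{j\,k}\cdot(-K_X)=2a_{j\,k}+(2+\delta)\,b_{j\,k}-\sum_{\lambda=1}^{k}\mult_{p_{j\,\lambda}}(\varphi_{j\,k}),
$$
where $\delta$ enters only through the positive coefficient $2+\delta$ of $b_{j\,k}$; this is the exact analogue of the expression $\Lambda_{j\,k}^2=2a_{j\,k}b_{j\,k}+\delta b_{j\,k}^2-\sum_\lambda\mult_{p_{j\,\lambda}}(\varphi_{j\,k})^2$ used in \cite{GalMonMor}. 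The base case is immediate: $\mult_{p_{j\,1}}(\varphi_{j\,1})=1$, $b_{j\,1}\geq 1$ and $a_{j\,1}\geq 0$ give $\Lambda_{j\,1}\cdot(-K_X)\geq (2+\delta)-1=1+\delta>0$.

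Next I would establish a recursion along the chain $\mathcal{C}_{\nu_j}=\{p_{j\,1},\ldots,p_{j\,n_j}\}$. From the proximity equalities \eqref{proximity_equalities} applied to the curvettes $\varphi_{j\,k}$ one gets $\mult_{p_{j\,k}}(\varphi_{j\,k})=1$, and that passing from $k$ to $k+1$ leaves the multiplicities at $p_{j\,1},\ldots,p_{j\,k}$ unchanged if $p_{j\,k+1}$ is free, while it adds to them the multiplicities of $\varphi_{j\,k'}$ if $p_{j\,k+1}$ is a satellite point proximate to $p_{j\,k}$ and $p_{j\,k'}$. Since $\varphi_{M_0}$ and $\varphi_{F_j}$ are smooth germs whose strict transforms follow only a contiguous initial segment of free points of the chain, $a_{j\,k}$ and $b_{j\,k}$ are additive in the same way. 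Substituting into the displayed formula yields
$$
\Lambda_{j\,k+1}\cdot(-K_X)=\Lambda_{j\,k}\cdot(-K_X)+\chi_{k+1}\,\Lambda_{j\,k'}\cdot(-K_X)+\bigl(2\varepsilon^{M}_{k+1}+(2+\delta)\varepsilon^{F}_{k+1}-1\bigr),
$$
where $\chi_{k+1}=1$ exactly when $p_{j\,k+1}$ is satellite (proximate to $p_{j\,k}$ and $p_{j\,k'}$) and $\varepsilon^{M}_{k+1},\varepsilon^{F}_{k+1}\in\{0,1\}$ are the multiplicities of $\varphi_{M_0},\varphi_{F_j}$ at $p_{j\,k+1}$, subject to $\varepsilon^{M}_{k+1}+\varepsilon^{F}_{k+1}\leq 1$ once $k+1\geq 2$ (because $\tilde{M}_0$ and $\tilde{F}_j$ meet only over $p_{j\,1}$). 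Consequently the parenthesis is always $\geq -1$; along the initial segment $\{p_{j\,1},\ldots,p_{j\,c}\}$ followed by $\tilde{M}_0$ or $\tilde{F}_j$ one has $\chi=0$ and the parenthesis is $\geq 1$, so $\Lambda_{j\,k}\cdot(-K_X)$ is strictly increasing there and in particular stays positive for $k\leq c$.

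Finally I would argue by contradiction. Suppose $\Lambda_{j\,k_0}\cdot(-K_X)\leq 0$ for some $k_0<n_j$, with $k_0$ minimal. By the previous paragraph $k_0>c\geq 1$; evaluating the recursion at $k_0$ and using $\Lambda_{j\,k_0-1}\cdot(-K_X)\geq 1$ and $\Lambda_{j\,k'}\cdot(-K_X)\geq 1$ for $k'<k_0$ forces $\Lambda_{j\,k_0-1}\cdot(-K_X)=1$, $\Lambda_{j\,k_0}\cdot(-K_X)=0$ and $p_{j\,k_0}$ free. Because $p_{j\,k_0}$ is free, a straightforward induction on $l\geq k_0$ (using the elementary fact that if $p_{j\,l+1}$ is proximate to $p_{j\,r}$ with $r<l$ then $p_{j\,l}$ is also proximate to $p_{j\,r}$, so that the exceptional divisors ``seen'' by a new point are among those seen by the previous centre) shows that no $p_{j\,l}$ with $l\geq k_0$ is proximate to any $p_{j\,k'}$ with $k'\leq k_0-2$. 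Hence, for $k\geq k_0$, the second proximate point $p_{j\,k'}$ of a satellite $p_{j\,k+1}$ satisfies $k'\geq k_0-1$, so $\Lambda_{j\,k'}\cdot(-K_X)\leq 1$ provided $\Lambda_{j\,l}\cdot(-K_X)\leq 0$ for all $k_0\leq l\leq k$; since all $\varepsilon$'s vanish beyond the initial segment, the recursion then gives $\Lambda_{j\,k+1}\cdot(-K_X)\leq\Lambda_{j\,k}\cdot(-K_X)+1-1=\Lambda_{j\,k}\cdot(-K_X)$. Starting from $\Lambda_{j\,k_0}\cdot(-K_X)=0$, induction yields $\Lambda_{j\,k}\cdot(-K_X)\leq 0$ for all $k\geq k_0$, contradicting $\Lambda_{j\,n_j}\cdot(-K_X)>0$. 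The only point requiring care is the combinatorial induction on proximities in the last step, precisely the kind of bookkeeping that also underlies \cite[Lemma 3.4]{GalMonMor}.
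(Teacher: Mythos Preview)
Your proof is correct and takes a genuinely different route from the paper. Both arguments start from the same numerical identity
\[
\Lambda_{j\,k}\cdot(-K_X)=2a_{j\,k}+(2+\delta)\,b_{j\,k}-\sum_{\lambda=1}^{k}\mult_{p_{j\,\lambda}}(\varphi_{j\,k}),
\]
but they diverge immediately afterwards. The paper proves the \emph{descending} implication $\Lambda_{n}\cdot(-K_X)>0\Rightarrow\Lambda_{n-1}\cdot(-K_X)>0$ by a case analysis on whether $p_n$ is free or satellite; in the satellite case it invokes Lemma~\ref{lem:_seq_valo_betabarras} (expressing $\sum_\lambda\nu(\mathfrak{m}_\lambda)$ through the maximal contact values $\overline{\beta}_w$), the relation between the $\overline{\beta}$-sequences of $\nu_n$ and $\nu_{n-1}$, and a bound from \cite{GalMon}, splitting further into the subcases $\hat g=g$ and $\hat g=g-1$. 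Your argument avoids all of that machinery: you extract the elementary recursion
\[
\Lambda_{k+1}\cdot(-K_X)=\Lambda_{k}\cdot(-K_X)+\chi_{k+1}\,\Lambda_{k'}\cdot(-K_X)+\bigl(2\varepsilon^{M}_{k+1}+(2+\delta)\varepsilon^{F}_{k+1}-1\bigr)
\]
directly from the proximity equalities for curvette multiplicities, and then run a forward contradiction argument pinning down a minimal failure index $k_0$, showing $p_{k_0}$ must be free with $\Lambda_{k_0-1}\cdot(-K_X)=1$, and propagating $\Lambda_k\cdot(-K_X)\le 0$ to $k=n_j$. The combinatorial lemma you use (that no $p_l$ with $l\ge k_0$ can be proximate to $p_r$ with $r\le k_0-2$, because $p_{k_0}$ is free) is exactly what makes the induction close, and it is standard. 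What the paper's approach buys is sharper quantitative control (it essentially computes $\Lambda_{n-1}\cdot(-K_X)$ in terms of $\Lambda_n\cdot(-K_X)$ and the discrete invariants $e,N_w$), at the price of importing the $\overline{\beta}$-apparatus and an external reference. What your approach buys is self-containment: only the proximity equalities and integrality of the intersection numbers are used, and the argument is uniform in the free/satellite dichotomy. Your treatment of the ``initial segment'' (where the parenthesis is $\ge 1$) also handles cleanly the boundary situation in which $p_n$ is a free point lying on $\tilde{M}_0$ or $\tilde{F}_j$, a case that the paper's one-line free-case assertion glosses over.
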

\begin{proof}
To prove the result is enough to see that, for each $j\in\{1,\ldots,m\}$ and $k\in\{2,\ldots,n_j\}$,  $\Lambda_{j \,k}\cdot (-K_{X})>0$ implies that $\Lambda_{j\, k-1}\cdot (-K_{X})>0.$ Also, without loss of generality, we can assume that $k=n=n_j$ and, simplifying the notation, $\nu_{n}$ (respectively, $\nu_{n-1}$) denotes the divisorial valuation defined by the exceptional divisor $E_{j\,n_j}$ (respectively, $E_{j\,n_j-1}$). By Subsection \ref{subsec:div_val}, the divisor $\Lambda_\tau :=\Lambda_{j\, \tau},\tau=n,n-1$, can be written as
 $$
 \Lambda_\tau=\nu_\tau(\varphi_{M_0})F^*+\nu_\tau(\varphi_{F_{\nu_n}})M^* - \sum_{\lambda=1}^\tau \nu_\tau(\mathfrak{m}_\lambda)E_\lambda^*.
 $$
As a result, one obtains that
\begin{equation}
\Lambda_\tau\cdot (-K_{X})=2\nu_\tau(\varphi_{M_0})+\nu_\tau(\varphi_{F_{\nu_n}})(2+\delta) - \sum_{\lambda=1}^\tau\nu_\tau(\mathfrak{m}_\lambda).
\end{equation}

In order to prove the result, we distinguish two cases: either the point $p_n:=p_{j\,n_j}$ is free or it is a satellite point.

If the point $p_n$ is free, then $\Lambda_{n-1}\cdot (-K_{X})>\Lambda_n\cdot (-K_{X})$ and we obtain the result.

Now let us suppose that $p_n$ is a satellite point. Denote by $(\overline{\beta}_w(\nu_n))_{w=0}^g$ (respectively, $(\overline{\beta}_w(\nu_{n-1}))_{w=0}^{\hat{g}}$) the maximal contact values of $\nu_n$ (respectively, $\nu_{n-1}$). Write $e_w(\nu_\tau)=\gcd\left(\overline{\beta}_0(\nu_\tau),\ldots,\overline{\beta}_w(\nu_\tau)\right),$ $0\leq w\leq \mu,$ and $N_w(\nu_\tau)=e_{w-1}(\nu_\tau)/e_w(\nu_\tau),1\leq w\leq \mu,$ for $\mu=g,\hat{g}$ and $\tau=n,n-1$.

Here, we distinguish two subcases: $\hat{g}=g$ and $\hat{g}=g-1.$

Firstly, consider the \emph{subcase $\hat{g}=g$}  and  set $e= e_{g-1}(\nu_{n-1})/e_{g-1}(\nu_n).$ Then the points $p_n$ and $p_{n-1}$ are sate\-llite. In addition, it holds that
\begin{equation}\label{eq:Lemma_cano_betabarra_mismo_g_1}
\begin{array}{c}
\nu_{n-1}(\varphi_{M_0})=e\nu_{n}(\varphi_{M_0}),\nu_{n-1}(\varphi_{F_{\nu_n}})=e\nu_{n}(\varphi_{F_{\nu_n}}), \overline{\beta}_{g+1}(\nu_\tau)=N_g(\nu_\tau)\overline{\beta}_g(\nu_\tau),\\[2mm]
\overline{\beta}_w(\nu_{n-1})=e\overline{\beta}_{w}(\nu_n) \text{ and } N_w(\nu_{n-1})=N_w(\nu_n),\text{ for }0\leq w < g,
\end{array}
\end{equation}
 and, by \cite[Lemma 2]{GalMon},
\begin{equation}\label{eq:Lemma_cano_betabarras_mismo_g_2}
-\dfrac{\overline{\beta}_g(\nu_{n-1})}{e}\geq -\dfrac{1}{e\,e_{g-1}(\nu_n)}-\overline{\beta_g}(\nu_n).
\end{equation}
As a consequence, one gets that
$$
\begin{array}{rcl}
\Lambda_{n-1}\cdot(-K_{X})&=&e\left(2\nu_{n}(\varphi_{M_0})+\nu_{n}(\varphi_{F_{\nu_n}})\left(2+\delta\right)-\left(\overline{\beta}_{g}(\nu_{n-1})/e\right.\right. \\ & & \hspace{4mm} \left.\left.+\sum_{w=1}^{g-1}\overline{\beta}_w(\nu_{n})(1-N_w(\nu_{n}))+\overline{\beta}_0(\nu_{n})\right)\right)+1\\[2mm]
&\geq & e\left(2\nu_{n}(\varphi_{M_0})+\nu_{n}(\varphi_{F_{\nu_n}})\left(2+\delta\right)-\left(1/(e\,e_{g-1}(\nu_n))+\overline{\beta_g}(\nu_n)\right.\right. \\ & & \hspace{4mm} \left.\left.+\sum_{w=1}^{g-1}\overline{\beta}_w(\nu_{n})(1-N_w(\nu_{n}))+\overline{\beta}_0(\nu_{n})\right)\right)+1\\[2mm]
& = & e \left(\Lambda_{n}\cdot(-K_{X}) -1 \right)- 1/e_{g-1}(\nu_{n-1}) +1\\[2mm]
& > & 0,
\end{array}
$$
where the first equality is deduced from Lemma \ref{lem:_seq_valo_betabarras} and  \eqref{eq:Lemma_cano_betabarra_mismo_g_1}, the first inequality comes from  \eqref{eq:Lemma_cano_betabarras_mismo_g_2}, the second equality follows from \eqref{eq:Lemma_cano_betabarra_mismo_g_1} and the second inequality holds because $\Lambda_{n}\cdot(-K_{X})>0$ and $1/e_{g-1}(\nu_{n-1}) <1$.

Now let us show the \emph{subcase $\hat{g}=g-1$.} Under this situation, the point $p_n$ is satellite and the point $p_{n-1}$ is free. Set $e=1/e_{g-1}(\nu_n).$ Then,
\begin{equation}\label{eq:Lemma_cano_betabarras_mismo_g_3}
\begin{array}{c}
\nu_{n-1}(\varphi_{M_0})=e\nu_{n}(\varphi_{M_0}),\nu_{n-1}(\varphi_{F_\nu})=e\nu_{n}(\varphi_{F_{\nu_n}}), e_{g-1}(\nu_n)=2,\\[2mm]
 \overline{\beta}_w(\nu_{n-1})=e\overline{\beta}_{w}(\nu_n) \text{ and } N_w(\nu_{n-1})=N_w(\nu_n),\text{ for }0\leq w \leq \hat{g}.
\end{array}
\end{equation}
Therefore, we obtain that
$$
\begin{array}{rcl}
\Lambda_{n-1}\cdot(-K_{X})&=&2e\nu_{n}(\varphi_{M_0})+e\nu_{n}(\varphi_{F_{\nu_n}})\left(2+\delta\right)-\left(\overline{\beta}_{\hat{g}+1}(\nu_{n-1})\right. \\ & &  \left.+e\sum_{w=0}^{g-1}\overline{\beta}_w(\nu_{n})(1-N_w(\nu_{n}))+e\overline{\beta}_0(\nu_{n})-1\right)\\[2mm]
& = & e\left(2\nu_{n}(\varphi_{M_0})+\nu_{n}(\varphi_{F_{\nu_n}})\left(2+\delta\right)-\left(\overline{\beta}_{g+1}(\nu_{n})\right.\right. \\ & & \hspace{4mm} \left.\left.+\sum_{w=0}^{g}\overline{\beta}_w(\nu_{n})(1-N_w(\nu_{n}))+\overline{\beta}_0(\nu_{n})-1\right)\right)-\overline{\beta}_{\hat{g}+1}(\nu_{n-1})\\
 & & +e\overline{\beta}_{g+1}(\nu_{n})+e\overline{\beta}_g(\nu_{n})(1-N_g(\nu_{n}))-e+1\\[2mm]
 & = & e\left(\Lambda_{n}\cdot(-K_{X})\right) + e\overline{\beta}_g(\nu_{n})-\overline{\beta}_{\hat{g}+1}(\nu_{n-1})-e+1\\[2mm]
 & = & e\left(\Lambda_{n}\cdot(-K_{X})\right)\\[2mm]
& > & 0,
\end{array}
$$
where the first equality comes from \eqref{eq:Lemma_cano_betabarras_mismo_g_3}, the second one is true because we consider superfluous elements to get the product $\Lambda_{n}\cdot(-K_{X})$ and the last one follows from the equalities $e_{g-1}(\nu_n)=2,$ since $p_n$ is a satellite point and $p_{n-1}$ is a free point, and
$$
\overline{\beta}_{\hat{g}+1}(\nu_{n-1})=\dfrac{e_{g-1}(\nu_{n})\overline{\beta}_g(\nu_{n})+2}{4}.
$$
This completes the proof.
\end{proof}

Now, we are ready to state the announced corollary of Proposition \ref{prop:K_Xposi_inter} showing that the behaviour of the value $b'(\operatorname{APG}(X,\pi))$ is close to that of $a(\operatorname{APG}(X,\pi))$.

\begin{corollary}\label{cor:value_b}
Keep the notation introduced previously.
\begin{itemize}
\item[(a)] Consider a family $\mathcal{F}$ of rational surfaces $X$ formed by a composition of blowups $\pi:X\to \mathbb{F}_\delta$ as in Proposition \ref{thm:nonnegative_selfinters}, but having the same proximity graph $\operatorname{PG}(X,\pi)$. Then, the set $\{b'(\operatorname{APG}(X,\pi)) \, | \, X \in \mathcal{F}\}$ is finite and its values depend not only on $\operatorname{PG}(X,\pi)$ but on the number of (necessarily free) points of the configuration $\mathcal{C}$ (given by $\pi$) through which the strict transforms of the fibers and the special section of $\mathbb{F}_\delta$ pass.

\item[(b)] Suppose that the configuration $\mathcal{C}$ has only free points. Assume that, for any divisorial valuation $\nu_j$ defined by a final exceptional divisor of $\mathcal{C}$, $\nu_j(\varphi_{M_0})=0$ and $\nu_j(\varphi_{F_j})=1$, $F_j$ being the fiber of $\mathbb{F}_\delta$ going through the origin of $\mathcal{C}_{\nu_j}$. Then,
    $$b'(\operatorname{APG}(X,\pi))=\max\Bigg\lbrace \Bigg\lceil\sum_{h=1}^s \# (\mathcal{C}_{\nu_{j_h}})-2\Bigg\rceil^+ \ \bigg| \  {\footnotesize \begin{array}{l}
       \text{$\big\lbrace \mathcal{C}_{\nu_{j_h}} \big\rbrace_{h=1}^s$ is a set of pairwise}\\ \text{disjoint chains whose origins} \\ \text{belong to different fibers}\\
    \end{array}}\Bigg\rbrace,$$
    where $\lceil x \rceil^+$ is defined as the ceil of a rational number $x$ if $x\geq 0$, and $0$ otherwise.

\item[(c)] Suppose that the configuration $\mathcal{C}$ is constellation. Then,
    $$b'(\operatorname{APG}(X,\pi))=\max\Bigg\lbrace \Bigg\lceil\dfrac{\sum_{\lambda=1}^{n_{j}}\mult_{p_{j\,\lambda}}(\varphi_{j\,n_{j}})-2(a_{j\,n_j}+b_{j\,n_j})}{b_{j\,n_j}}\Bigg\rceil^+ \ \bigg| \  1\leq j\leq m
\Bigg\rbrace.
     $$
\end{itemize}
\end{corollary}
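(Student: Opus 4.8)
The plan is to read off all three statements from Proposition~\ref{prop:K_Xposi_inter}, the explicit generating set $\mathcal{G}$ of $\mathfrak{C}^\vee(X)$ produced in Lemma~\ref{lema:Dual_cone_S}, and the formula of Remark~\ref{GetBp}, which expresses $b'(\operatorname{APG}(X,\pi))$ as the maximum of the numbers $\lceil(\sum_l c_l-2a)/b-2\rceil^+$ as $D=aF^*+bM^*-\sum_l c_l E_l^*$ runs over $\mathcal{G}\setminus\{F^*\}$. Throughout I use that, by the Noether formula, the entries $a,b,c_l$ of such a $D$ are built from the curvette multiplicities $\mult_{p_{j\,\lambda}}(\varphi_{j\,k})$ and from the local intersection numbers $(\varphi_{M_0},\cdot)_{p}$, $(\varphi_{F},\cdot)_{p}$, none of which involves $\delta$; in particular, by \eqref{Eq_proposition_bX}, $\Lambda_{j\,k}\cdot(-K_{X})=b_{j\,k}\bigl(\delta-v_{j\,k}\bigr)$ where $v_{j\,k}:=\bigl(\sum_{\lambda=1}^{k}\mult_{p_{j\,\lambda}}(\varphi_{j\,k})-2(a_{j\,k}+b_{j\,k})\bigr)/b_{j\,k}$.

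For (a) I would imitate the proof of Corollary~\ref{cor:delta0}(a). The curvette multiplicities depend only on $\operatorname{PG}(X,\pi)$, while the numbers $a_{j\,k}$, $b_{j\,k}$ (hence the $z$'s and all coefficients of the $\Lambda$'s and $W$'s) depend in addition on how the strict transforms of $M_0$ and of the fibers meet $\mathcal{C}$, that is, on the arrows of $\operatorname{APG}(X,\pi)$. Since $M_0$ and each fiber are smooth, their strict transforms pass through a chain of free points, so there are only finitely many arrow placements compatible with a fixed proximity graph, and each one determines the multiplicities $\mult_{p_{j\,\lambda}}(\varphi_{M_0}),\mult_{p_{j\,\lambda}}(\varphi_{F_i})\in\{0,1\}$ and hence all of $a,b,c_l$. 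Feeding these finitely many tuples into Remark~\ref{GetBp} gives a finite set of values of $b'$, which genuinely varies with the arrow placement and not only with $\operatorname{PG}(X,\pi)$.

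For (b), all points of $\mathcal{C}$ are free, so each $\varphi_{j\,k}$ is smooth and $\mult_{p_{j\,\lambda}}(\varphi_{j\,k})=1$ for $1\le\lambda\le k$; thus every nonzero $c_l$ appearing in Lemma~\ref{lema:Dual_cone_S} equals $1$. From $\nu_j(\varphi_{M_0})=\sum_\lambda\mult_{p_{j\,\lambda}}(\varphi_{j\,n_j})\mult_{p_{j\,\lambda}}(\varphi_{M_0})=0$ and positivity of the curvette multiplicities one gets $\mult_{p_{j\,\lambda}}(\varphi_{M_0})=0$ along the whole chain, so $a_{j\,k}=0$; likewise $\nu_j(\varphi_{F_j})=1$ forces $\mult_{p_{j\,1}}(\varphi_{F_j})=1$ and $\mult_{p_{j\,\lambda}}(\varphi_{F_j})=0$ for $\lambda\ge2$, whence $b_{j\,k}=\mult_{p_{j\,1}}(\varphi_{j\,k})=1$. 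Consequently the primitive tuple $(z_{j_1\,k_1},\dots,z_{j_s\,k_s})$ in every $W$-divisor equals $(1,\dots,1)$, so each $\Lambda_{j\,k}$ and each $W_{(j_1\,k_1),\dots,(j_s\,k_s)}$ has $a=0$, $b=1$ and $\sum_l c_l$ equal to the number of exceptional divisors in its support, namely $k$, respectively $\sum_{h=1}^s k_h$ (the chains being pairwise disjoint, there is no overcounting). Hence, aside from the entry $M^*$ (which contributes $-2$), Remark~\ref{GetBp} reduces $b'$ to the maximum of $\sum_l c_l-2$, which is attained by taking full chains $k_h=n_{j_h}$ over sets of pairwise disjoint chains whose origins lie on distinct fibers (single chains being the case $s=1$); this is the stated maximum.

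For (c), Remark~\ref{const} deletes all $W$-divisors, so $\mathcal{G}\setminus\{F^*\}=\{M^*\}\cup\{\Lambda_{j\,k}\}$, and Remark~\ref{GetBp} makes $b'$ the maximum of $0$ and of the quantities $\lceil v_{j\,k}\rceil^+$. The crux is to prove $v_{j\,k}\le v_{j\,n_j}$ for all $k\le n_j$ and all $j$, which is exactly the monotonicity encoded in Lemma~\ref{lemma_anticanonical_div}: whenever $\Lambda_{j\,n_j}\cdot(-K_{X})>0$ for every $j$ — that is, whenever $\delta>\max_j v_{j\,n_j}$ — one automatically has $\Lambda_{j\,k}\cdot(-K_{X})>0$, i.e. $\delta>v_{j\,k}$, for every $j$ and $k$; hence $\max_{j,k}v_{j\,k}=\max_j v_{j\,n_j}$, and applying the nondecreasing map $\lceil\cdot\rceil^+$ (which swallows the extra $0$) yields the claimed formula. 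I expect the only delicate bookkeeping in the whole corollary to be the collapse of $a$, $b$ and the $z$'s in part (b) and, in part (c), the precise translation between the anticanonical-intersection inequalities of Lemma~\ref{lemma_anticanonical_div} and the ceiling formula of Remark~\ref{GetBp}; the genuinely hard monotonicity input — the maximal-contact-value case analysis — is already isolated in Lemma~\ref{lemma_anticanonical_div}.
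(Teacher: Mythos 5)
Your proposal is correct and takes essentially the same route as the paper: the paper's proof of this corollary is precisely to repeat the argument of Corollary \ref{cor:delta0} with Equality \eqref{Eq_proposition_bX} (i.e.\ Remark \ref{GetBp}) in place of the self-intersection formula, and with Lemma \ref{lemma_anticanonical_div} playing the role of the reduction to the divisors $\Lambda_{j\,n_j}$ in the constellation case. Your explicit bookkeeping — that in part (b) one gets $a_{j\,k}=0$, $b_{j\,k}=1$ and primitive tuples $(1,\dots,1)$ so every generator has the form $M^*-\sum c_lE_l^*$ with $c_l\in\{0,1\}$, and that in part (c) Lemma \ref{lemma_anticanonical_div}, whose computation treats $\delta$ purely as a parameter, yields $\delta> v_{j\,n_j}$ for all $j$ implies $\delta> v_{j\,k}$ for all $j,k$ — simply makes explicit what the paper leaves implicit.
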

\begin{proof}
It follows reasoning as we did in the proof of Corollary \ref{cor:delta0}, but considering Equality \eqref{Eq_proposition_bX} and Lemma \ref{lemma_anticanonical_div}.
\end{proof}

\begin{remark}
Statement (b) also follows from considering a reduced set of divisorial valuations as described in Remark \ref{re:calculo_a_apartado_b_corolario}.
\end{remark}

\subsection{The main result}
In this subsection we state and prove the  \emph{main result in the paper}, Theorem \ref{Thm:Cone_equiv_conds}. It shows that the arrowed proximity graph of a rational surface $X$ obtained from a relatively minimal model which is a Hirzebruch surface $\mathbb{F}_\delta$ determines two values that, when  $\delta$ exceeds or equals one of them, in the first case, the cone $\operatorname{NE}(X)$ is polyhedral and minimally generated and, in the second one, $X$ is a Mori dream space.

We conclude this subsection with a close result to Theorem \ref{Thm:Cone_equiv_conds} for some surfaces obtained by a composition of point blowups over the projective plane.

\begin{theorem}\label{Thm:Cone_equiv_conds}
Let $X$ be a rational surface obtained by a composition $\pi:X\to\mathbb{F}_\delta$ of point blowups given by a configuration of infinitely near points over a Hirzebruch surface $\mathbb{F}_\delta$. Then, there exist two positive integers only depending on the arrowed proximity graph, the value $a(\operatorname{APG}(X,\pi))$ introduced in Proposition \ref{thm:nonnegative_selfinters} and the value $b(\operatorname{APG}(X,\pi)):=\max\{a(\operatorname{APG}(X,\pi)),b'(\operatorname{APG}(X,\pi))\},$ where $b'(\operatorname{APG}(X,\pi))$ is the value introduced in Proposition \ref{prop:K_Xposi_inter},  such that:
\begin{itemize}

\item[1)] If $\delta\geq a(\operatorname{APG}(X,\pi))$, then
\begin{itemize}
\item[(a)] The cone of curves $\operatorname{NE}(X)$ is finite polyhedral and its extremal rays are given by the classes of the strict transforms on $X$ of the fibers on $\mathbb{F}_\delta$ going through the blown up points in $\mathbb{F}_\delta,$ the special section, and the exceptional divisors of $\pi.$
\item[(b)] The nef cone $\operatorname{Nef}(X)$ is generated by the classes of the specific divisors shown in Lemma \ref{lema:Dual_cone_S}.
\end{itemize}
\item[2)] If $\delta\geq b(\operatorname{APG}(X,\pi))$, then $X$ is a Mori dream space.
\end{itemize}
\end{theorem}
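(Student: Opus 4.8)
The plan is to identify the cone of curves $\NE(X)$ with the finitely generated cone $\mathfrak{C}(X)$ and, dually, $\Nef(X)$ with $\mathfrak{C}^\vee(X)$. One inclusion is immediate: each generator of $\mathfrak{C}(X)$ in \eqref{generators} is the class of an integral curve on $X$ --- the strict transform $\tilde{F}_i$ of a fiber through a blown up point, the strict transform $\tilde{M}_0$ of $M_0$, or the strict transform $E_l$ of an exceptional divisor --- so $\mathfrak{C}(X)\subseteq\NE(X)\subseteq\overline{\NE}(X)$, and dualizing, $\Nef(X)\subseteq\mathfrak{C}^\vee(X)$. Moreover, when $\delta\ge a(\operatorname{APG}(X,\pi))\ge 1$ each of these curves has negative self-intersection: $\tilde{M}_0^2\le -\delta<0$ since $M_0^2=-\delta$; each $E_l$ has self-intersection $\le -1$ as a strict transform of an exceptional divisor; and each $\tilde{F}_i$ has self-intersection $\le -1$ since $F_i^2=0$ and the origin of the relevant chain is a blown up point of $F_i$. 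Thus the entire content of part~1) is the reverse inclusion: \emph{when $\delta\ge a(\operatorname{APG}(X,\pi))$, every generator of $\mathfrak{C}^\vee(X)$ listed in Lemma~\ref{lema:Dual_cone_S} is a nef class.}

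For $F^*$ and $M^*$ this is clear: they are the pullbacks under $\pi$ of the nef (indeed base-point-free) divisors $F$ and $M\sim M_0+\delta F$ on $\mathbb{F}_\delta$, and pullbacks of nef divisors are nef. For each remaining generator $D=aF^*+bM^*-\sum_l c_lE_l^*$ (with $a\ge 0$ and $b>0$), Proposition~\ref{thm:nonnegative_selfinters} gives $D^2\ge 0$ when $\delta\ge a(\operatorname{APG}(X,\pi))$; since moreover $D\cdot M^*=a+b\delta>0$ and $M^*$ is nef, $D$ lies in the closure of the positive cone of $\NS_\mathbb{R}(X)$. By the Hodge index theorem this cone is self-dual, so $D\cdot C\ge 0$ for every integral curve $C$ with $C^2\ge 0$. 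It therefore remains to bound $D\cdot C$ from below for integral curves $C$ with $C^2<0$; as $D\in\mathfrak{C}^\vee(X)$, this holds automatically once $[C]\in\mathfrak{C}(X)$. Hence everything reduces to the key claim: \emph{if $\delta\ge a(\operatorname{APG}(X,\pi))$, the only integral curves of negative self-intersection on $X$ are the $\tilde{F}_i$, $\tilde{M}_0$ and the $E_l$.}

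I expect this claim to be the main obstacle. My approach would be to note that $\Lambda_{j\,k}$ (resp.\ $W_{(j_1\,k_1),\dots,(j_s\,k_s)}$) is the pullback, along the birational morphism from $X$ to the surface obtained by blowing up $\mathbb{F}_\delta$ only at the chain $\mathcal{C}_{\nu_j}$ truncated at $p_{j\,k}$ (resp.\ only at the chains $\mathcal{C}_{\nu_{j_1}},\dots,\mathcal{C}_{\nu_{j_s}}$), of a divisor canonically attached there to the valuation(s) involved. On such an intermediate surface the divisor attached to a divisorial valuation is nef exactly when its self-intersection is nonnegative --- the circle of ideas around divisorial valuations ``non-positive at infinity'' exploited in \cite[Lemma~3.4]{GalMonMor} and \cite[Lemma~2]{GalMon}, adapted here to a Hirzebruch base --- and since pullback preserves both nefness and self-intersection, Proposition~\ref{thm:nonnegative_selfinters} forces $\Lambda_{j\,k}$ and $W_{(j_1\,k_1),\dots,(j_s\,k_s)}$ to be nef. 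This gives $\mathfrak{C}^\vee(X)\subseteq\Nef(X)$, hence equality; dualizing yields $\overline{\NE}(X)=\mathfrak{C}(X)$, and the key claim then follows a posteriori. (A more elementary alternative for the key claim: an ``unexpected'' integral curve $C$ with $C^2<0$ would project to an integral curve numerically $\alpha F+\beta M$ on $\mathbb{F}_\delta$ with $\beta\ge 1$, and combining $C^2<0$ with the adjunction inequality $p_a(C)\ge 0$ can be made to contradict the bound defining $a(\operatorname{APG}(X,\pi))$ in Remark~\ref{GetA}.)

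Granting the key claim, part~1) concludes quickly: $\mathfrak{C}^\vee(X)=\Nef(X)$, so $\overline{\NE}(X)=\mathfrak{C}(X)$ by duality, and since $\mathfrak{C}(X)$ is finitely generated (hence closed) and squeezed between $\mathfrak{C}(X)$ and $\overline{\NE}(X)$ we get $\NE(X)=\mathfrak{C}(X)$, a finite polyhedral cone whose generators each span an extremal ray (being classes of curves of negative self-intersection), so $\NE(X)$ is minimally generated; statement~(1b) is Lemma~\ref{lema:Dual_cone_S}. For part~2), assume $\delta\ge b(\operatorname{APG}(X,\pi))=\max\{a(\operatorname{APG}(X,\pi)),b'(\operatorname{APG}(X,\pi))\}$. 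By Proposition~\ref{prop:K_Xposi_inter} (applicable since $\delta\ge b'(\operatorname{APG}(X,\pi))$), $-K_X$ has positive intersection with every generator of $\mathfrak{C}^\vee(X)$, hence with every nonzero class of $\mathfrak{C}^\vee(X)$; as $\mathfrak{C}(X)$ is a full-dimensional closed convex cone, this puts $-K_X$ in the interior of $\mathfrak{C}^{\vee\vee}(X)=\mathfrak{C}(X)$, which equals $\overline{\NE}(X)$ by part~1) (since $\delta\ge a(\operatorname{APG}(X,\pi))$). Thus $-K_X$ is big --- a divisor on a surface being big precisely when its class is interior to the Mori cone --- and by the result of \cite{TesVarVel} that every rational surface with big anticanonical divisor is a Mori dream space, $X$ is a Mori dream space.
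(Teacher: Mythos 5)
Your reduction of part 1) to the statement that every generator of $\mathfrak{C}^\vee(X)$ is nef is sound, and your treatment of part 2) agrees with the paper's. The problem is that you then reduce nefness to your ``key claim'' (that for $\delta\geq a(\operatorname{APG}(X,\pi))$ the only integral curves of negative self-intersection are the $\tilde F_i$, $\tilde M_0$ and the $E_l$), explicitly flag it as the main obstacle, and never prove it. The key claim is equivalent to part 1)(a) itself (an irreducible curve of negative self-intersection spans an extremal ray of $\overline{\NE}(X)$), so at this point nothing has been gained. Neither of your two sketches closes the gap: the first rests on the assertion that on the intermediate surfaces ``the divisor attached to a divisorial valuation is nef exactly when its self-intersection is nonnegative,'' which is precisely the non-trivial content one is trying to establish (the cited \cite[Lemma 3.4]{GalMonMor} and \cite[Lemma 2]{GalMon} are technical facts about self-intersections and maximal contact values, not nefness criteria), and in any case says nothing about the divisors $W_{(j_1\,k_1),\ldots,(j_s\,k_s)}$ involving several chains; the second (``adjunction \dots can be made to contradict the bound'') is not an argument.

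The paper avoids the key claim altogether, and the missing idea is worth internalizing. First, every irreducible curve $C$ satisfies $[C]\in\mathfrak{C}(X)\cup\mathfrak{C}^\vee(X)$ with no hypothesis on $\delta$: if $[C]\notin\mathfrak{C}^\vee(X)$ then $C$ meets some generator $[C']$ of $\mathfrak{C}(X)$ negatively, and since $C'$ is itself an irreducible curve this forces $C=C'$, i.e.\ $[C]\in\mathfrak{C}(X)$. Hence $\overline{\NE}(X)=\mathfrak{C}(X)+\mathfrak{C}^\vee(X)$ once one knows $\mathfrak{C}^\vee(X)\subseteq\overline{\NE}(X)$, which follows for $\delta\geq a(\operatorname{APG}(X,\pi))$ from Proposition \ref{thm:nonnegative_selfinters} and the inclusion $Q(X)\subseteq\overline{\NE}(X)$ of \cite[Lemma 1.20]{KolMor}, where $Q(X)=\{[D]\ |\ [D]^2\geq 0,\ [H]\cdot[D]\geq 0\}$. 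Second, the self-duality of $Q(X)$ gives $\mathfrak{C}^\vee(X)\subseteq Q(X)\subseteq(\mathfrak{C}^\vee(X))^\vee=\mathfrak{C}(X)$, so the Minkowski sum collapses to $\mathfrak{C}(X)$ and one concludes without ever identifying the negative curves. (In your own framework the same two observations would suffice: for $\delta\geq a(\operatorname{APG}(X,\pi))$ every class in $\mathfrak{C}^\vee(X)$ has nonnegative square, so any integral curve with $C^2<0$ has $[C]\notin\mathfrak{C}^\vee(X)$, hence $[C]\in\mathfrak{C}(X)$ by the dichotomy, and then $D\cdot C\geq 0$ for $D\in\mathfrak{C}^\vee(X)$ as you wanted. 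But as written, without this dichotomy your proof is incomplete.)
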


\begin{proof}

First, we prove Item 1) (a). Let $H$ be an ample divisor on $X$ and let us define the convex cone
$$
    Q(X):=\lbrace [D]\in \NS_{\mathbb{R}}(X) \ | \ [D]^2\geq 0 \text{ and } [H]\cdot [D]\geq 0\rbrace .
$$
By \cite[Lemma 1.20]{KolMor}, it holds that $Q(X)\subseteq \overline{\NE}(X)$. Furthermore, the convex cone $\mathfrak{C}^\vee(X)$ introduced before Lemma \ref{lema:Dual_cone_S} is embedded in $Q(X)$. This is because every element in $\mathfrak{C}^\vee(X)$ has nonnegative self-intersection whenever $\delta\geq a(\operatorname{APG}(X,\pi))$ which holds by hypothesis. As a consequence, one obtains that $\mathfrak{C}(X)^\vee\subseteq \overline{NE}(X)$. We claim that $\overline{\NE}(X)=\mathfrak{C}(X) + \mathfrak{C}(X)^\vee$, where $+$ means the  Minkowski sum. It is immediate to check that $\overline{\NE}(X)$ contains $\mathfrak{C}(X) + \mathfrak{C}(X)^\vee$. Let us see that $\NE(X)\subseteq \mathfrak{C}(X) + \mathfrak{C}(X)^\vee$. Indeed, consider any reduced and irreducible curve $C$ whose class does not belong to $\mathfrak{C}(X)$. If $[C]\in \mathfrak{C}^\vee(X)$, the inclusion is clear. Otherwise, there is another  curve $C'\neq C$ such that  $[C']\in \mathfrak{C}(X)$ and $[C]\cdot [C']<0,$ which implies that $C$ and $C'$ have a component in common, leading to a contradiction. As a result, the chain of inclusions $\NE(X)\subseteq \mathfrak{C}(X) + \mathfrak{C}(X)^\vee\subseteq \overline{\NE}(X)$ holds. Taking closures and since $\mathfrak{C}(X)$ and $\mathfrak{C}(X)^\vee$ are closed convex cones, we obtain the claimed statement. To finish, we assert that $\mathfrak{C}(X)^\vee \subseteq Q(X) \subseteq (\mathfrak{C}(X)^\vee)^\vee =\mathfrak{C}(X)$ \emph{which proves Item 1) (a)}. We have showed the first inclusion, and the second one follows from \cite[Proposition 2.1]{GalMonMorMoy}.

The facts that $\NE(X)$ is a polyhedral convex cone (by Item 1) (a)) and $\Nef(X)$ is the dual convex cone of $\overline{\NE}(X)$ \emph{show Item 1) (b)}.

\medskip

Finally, we prove \emph{Item 2)}. Recall that a divisor $D$ on $X$ is {\it big} if the rational map $\phi_m: X \dashrightarrow \mathbb{P}H^0(X,\mathcal{O}_X(mD))$ defined by $\mathcal{O}_X(mD)$ is birational onto its image for some $m>0$ (see \cite[Lemma 2.2.3]{Laz1} for an equivalent condition). By \cite[Remark 2.2.27]{Laz1}, the pseudoeffective divisors of $X$ are those with nonnegative intersection product with eve\-ry nef divisor on $X$. In addition, these divisors generate $\overline{\operatorname{NE}}(X)$ (which coincides with $\operatorname{NE}(X)$ in this case). The definition of $b(\operatorname{APG}(X,\pi))$ and Proposition \ref{prop:K_Xposi_inter} show that the intersection product of $-K_X$ with every generator of $\operatorname{Nef}(X)$ is positive. Even more, its class belongs to the interior of $\overline{\operatorname{NE}}(X)$, since otherwise, the intersection product with a generator of $\operatorname{Nef}(X)$ vanishes. As a consequence, by  \cite[Theorem 2.2.26]{Laz1}, $-K_X$ is a big divisor and Item 2) follows from  \cite[Theorem 1]{TesVarVel}.
\end{proof}

\begin{remark}
\label{Paco2}
Theorem \ref{Thm:Cone_equiv_conds} provides information on several of the commonly used cones to study rational surfaces. Indeed, with the above notation, if $\delta\geq a({\rm APG}(X,\pi))$, not only the cones ${\rm NE}(X)$ and ${\rm Nef}(X)$ are explicitly described (from the arrowed proximity graph of the pair $(X,\pi)$), but also the big cone, because it is the interior of the (finitely generated) cone ${\rm NE}(X)$ \cite[Theorem 2.2.26]{Laz1} (whose extremal rays we determine). Moreover, $\delta\geq b({\rm APG}(X,\pi))$ implies that the semiample cone of $X$ (that is, the convex cone spanned by the semiample divisors) coincides with ${\rm Nef}(X)$ \cite{HuKe}.
\end{remark}

\begin{remark}\label{re:reference_Mustapha}
To decide whether the cone of curves (respectively, the Cox ring) of a rational surface $X$ is finite polyhedral (respectively, finite generated) is a difficult issue, \cite{TesVarVel} being an important contribution. The case when $\mathbb{F}_\delta$ is a relatively minimal model has recently been studied in  \cite{mus,FriasLahy2,RosaFrisMus2020,FriasLahy1,AndRosaMus,RosaFrisMus2023}. In these last papers, the authors consider very specific configurations over $\mathbb{F}_\delta,$ mostly with proper points, to study the effective monoid and the Cox ring of $X$. Theorem \ref{Thm:Cone_equiv_conds} in this paper considers a much more general situation because we contemplate any configuration over $\mathbb{F}_\delta$; our results include some cases in the mentioned references. However, our bounds imply that $\operatorname{NE}(X)$ is not only finitely generated but also minimally generated, and therefore our conclusion is more restrictive and happens in a more general context than  in the above citation package.
\end{remark}

To finish this subsection we deduce a consequence of Theorem \ref{Thm:Cone_equiv_conds}. It gives information on the cone of curves NE$(X)$ and the finite generation of the ring Cox$(X)$ for some surfaces $X$ obtained by blowing up a configuration over the projective plane.

Let $X_1$ be the rational surface obtained by blowing up the projective plane $\mathbb{P}^2$ at a closed point $q\in\mathbb{P}^2,$ and denote by $E_q$ the created exceptional divisor. By \cite[Chapter IV, Proposition IV.1]{Beau}, $X_1$ is isomorphic to the Hirzebruch surface $\mathbb{F}_1$, where the special section $M_0$ coincides with the exceptional divisor $E_q$ and the fibers of $\mathbb{F}_1$ are the strict transforms of the lines of $\mathbb{P}^2$ passing through $q$.

Now, let us consider a composition $\pi':X\to\mathbb{P}^2$ of point blowups at a configuration of infinitely near points. This sequence can be factored as $\pi':X\xrightarrow{\pi}\mathbb{F}_1\xrightarrow{\pi_{1}}\mathbb{P}^2$. Denote by $p_1$ the point in $\mathbb{P}^2$ where the blowup $\pi_1:\mathbb{F}_1\cong \text{Bl}_{p_1}(\mathbb{P}^2)\to \mathbb{P}^2$ is centered. It holds that the pre-images by $\pi_1$ of the proper points in $\mathbb{P}^2$ different from $p_1$, and the points in the first infinitesimal neighbourhood of $p_1$ of the configuration giving rise to $\pi'$ belong to  some fibers on $\mathbb{F}_1$. Denote these fibers by $F_1,\ldots,F_r$. They can be regarded as projective lines $L_1,\ldots, L_r$ on $\mathbb{P}^2$ such that their strict transforms $\tilde{L}_i$ on $\text{Bl}_{p_1}(\mathbb{P}^2)$ belong to $|\pi_1^*L-E_{p_1}|$, for $1\leq i\leq r$, where $L$ is a general line on $\mathbb{P}^2$. Therefore, we obtain the following result.

\begin{corollary}\label{cor:cono_P2}
Keep the previously introduced notation. Let $X\xrightarrow{\pi}\mathbb{F}_1\xrightarrow{\pi_{1}}\mathbb{P}^2$ the factorization of a sequence $\pi':X\to\mathbb{P}^2$ of blowups over $\mathbb{P}^2$ as in the above paragraph.
Let $a(\operatorname{APG}(X,\pi))$ (respectively, $b(\operatorname{APG}(X,\pi))$) be the values introduced in Theorem \ref{Thm:Cone_equiv_conds} for $\pi$. If $a(\operatorname{APG(X,\pi)}) = 1$, then
the cone of curves $\operatorname{NE}(X)$ of $X$ is generated by the classes of the strict transforms on $X$ of the projective lines $L_{1},\ldots,L_{r}$ and the classes of the strict transforms of the exceptional divisors. Moreover,  if $a(\operatorname{APG(X,\pi)}) = b(\operatorname{APG(X,\pi)})= 1$, then $X$ is a Mori dream space.
\end{corollary}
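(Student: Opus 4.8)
The plan is to deduce the statement directly from Theorem \ref{Thm:Cone_equiv_conds} applied to the composition $\pi\colon X\to\mathbb{F}_1$, after translating its conclusion through the isomorphism $\mathbb{F}_1\cong\text{Bl}_{p_1}(\mathbb{P}^2)$ recalled in the paragraph preceding the corollary. Since here $\delta=1$, the hypothesis $a(\operatorname{APG}(X,\pi))=1$ is exactly the condition $\delta\geq a(\operatorname{APG}(X,\pi))$ required by Item 1) of that theorem, and $a(\operatorname{APG}(X,\pi))=b(\operatorname{APG}(X,\pi))=1$ is exactly the condition $\delta\geq b(\operatorname{APG}(X,\pi))$ of Item 2). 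So the whole argument reduces to rewriting the generators produced by Theorem \ref{Thm:Cone_equiv_conds} in terms of the data over $\mathbb{P}^2$.

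First I would record the dictionary between the two birational models. Under $\mathbb{F}_1\cong\text{Bl}_{p_1}(\mathbb{P}^2)$, the special section $M_0$ of $\mathbb{F}_1$ is the exceptional divisor $E_{p_1}$ of $\pi_1$, and the fibers of $\mathbb{F}_1\to\mathbb{P}^1$ passing through the origins of the constellations of the configuration defining $\pi$ are precisely the strict transforms on $\mathbb{F}_1$ of the lines $L_1,\ldots,L_r\subset\mathbb{P}^2$ introduced before the corollary (these are the fibers denoted $F_1,\ldots,F_r$ there). Consequently, writing $\pi'=\pi_1\circ\pi$, the strict transform on $X$ of such a fiber $F_i$ coincides with the strict transform on $X$ via $\pi'$ of the line $L_i$; the strict transform on $X$ of $M_0$ coincides with the strict transform of the exceptional divisor $E_{p_1}$ of $\pi'$; and the exceptional divisors of $\pi$ together with $E_{p_1}$ are exactly the exceptional divisors of $\pi'$.

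Next I would invoke Theorem \ref{Thm:Cone_equiv_conds}, Item 1)(a): since $\delta=1=a(\operatorname{APG}(X,\pi))$, the cone $\operatorname{NE}(X)$ is finite polyhedral and is generated by the classes of the strict transforms on $X$ of the fibers of $\mathbb{F}_1$ through the blown-up points, of the special section, and of the exceptional divisors of $\pi$. Rewriting each of these three families through the dictionary of the previous paragraph turns this generating set into the classes of the strict transforms of $L_1,\ldots,L_r$ together with the classes of the strict transforms of the exceptional divisors of $\pi'$, which is the asserted statement. Finally, if moreover $b(\operatorname{APG}(X,\pi))=1=\delta$, then Item 2) of the same theorem yields that $X$ is a Mori dream space.

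There is no serious obstacle here: the result is a reinterpretation of Theorem \ref{Thm:Cone_equiv_conds}. The only point requiring care is the bookkeeping between $\mathbb{F}_1$ and $\mathbb{P}^2$ — specifically, checking that passing to $\mathbb{P}^2$ neither loses nor adds generators, i.e.\ that the single new exceptional divisor $E_{p_1}$ over $\mathbb{P}^2$ accounts exactly for the special section $M_0$ of $\mathbb{F}_1$, so that the extremal rays coming from $M_0$ and from the exceptional divisors of $\pi$ merge cleanly into ``the strict transforms of the exceptional divisors of $\pi'$''.
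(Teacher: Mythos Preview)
Your proposal is correct and follows essentially the same approach as the paper: the paper does not give a formal proof of this corollary, treating it as an immediate consequence of Theorem \ref{Thm:Cone_equiv_conds} together with the dictionary between $\mathbb{F}_1$ and $\mathrm{Bl}_{p_1}(\mathbb{P}^2)$ spelled out in the paragraph preceding the statement. Your translation of the generating set (fibers $\leftrightarrow$ lines $L_i$, special section $\leftrightarrow$ $E_{p_1}$, exceptional divisors of $\pi$ plus $E_{p_1}$ $\leftrightarrow$ exceptional divisors of $\pi'$) is exactly the intended argument.
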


\begin{remark}
\label{Paco1}
By \cite{NagataM}, the cone ${\rm NE}(X)$ of the surface $X$ obtained by blowing up $\mathbb{P}^2$ at $9$ points in very general position is not finitely generated (and, therefore, $X$ is not a Mori dream space). The surface $X$ can be obtained by a sequence of $8$ blowups $\pi:X\rightarrow \mathbb{F}_1$ because $\mathbb{F}_1$ is isomorphic to the blowup of $\mathbb{P}^2$ at one point. The arrowed proximity graph $\mathcal{P}:={\rm APG}(X,\pi)$ consists of 8 vertices, each of them with an arrow corresponding to the fiber going through it. However, by Corollaries \ref{cor:delta0} and \ref{cor:value_b} and Theorem \ref{Thm:Cone_equiv_conds}, any rational surface $Y$ obtained by a sequence of blowups over $\mathbb{F}_\delta$  with arrowed proximity graph $\mathcal{P}$ is a Mori dream space whenever $\delta\geq 8$.
\end{remark}

\subsection{Examples}

We conclude this paper with three examples that use our results to provide  rational surfaces $X$ which either are Mori dream spaces or the generators of their cones of curves $\NE(X)$ and nef cones $\Nef(X)$ can be computed as we have explained before. The first two examples are a continuation of Examples \ref{New}, and \ref{ex:grafo_proximidad} and \ref{example_1}, respectively. Our third example also proves that the values $a(\operatorname{APG(X,\pi)}) $ and $ b(\operatorname{APG(X,\pi)})$ introduced in Theorem \ref{Thm:Cone_equiv_conds} can be different.

\begin{example}\label{New2}
Let $X$ be the rational surface considered in Example \ref{New} where we have computed a set $\mathcal{G}$ of divisors whose classes generate the cone $\mathfrak{C}^\vee(X)$. By the proof of Proposition \ref{thm:nonnegative_selfinters} and Remark \ref{GetA},
the value $a(\operatorname{APG}(X,\pi))$ is the smallest $\delta$ such that the following self-intersections are nonnegative.
$$
\begin{array}{c}
\Lambda_{1\,1}^2=\delta - 1,\ \Lambda_{1\,2}^2=\delta - 2,\ W_{(1\,2),(2\,1)}^2=\delta-1=W_{(1\,2),(3\,1)}^2,W_{(1\,1),(2\,2)}^2=4\delta-2,\\[2mm]
W_{(1\,2),(2\,2)}^2=4\delta-6,\ W_{(1\,1),(2\,3)}^2=9\delta-3,\ W_{(1\,2),(2\,3)}^2=9\delta-12.
\end{array}
$$
Notice that  the remaining self-intersections of elements in $\mathcal{G}$ do not influence the computation of $\delta$. Thus, one gets that $a(\operatorname{APG}(X,\pi))=2$. By Theorem \ref{Thm:Cone_equiv_conds}, if $\delta\geq 2,$ $\NE(X)$ is generated by the set $\{[\tilde{F}_1],[\tilde{F}_2],[\tilde{M}_0],[E_1],\ldots, [E_{6}]\}$. The classes of the divisors in the set  $\mathcal{G}$ (computed in Example \ref{New}) generate the nef cone $\operatorname{Nef}(X)$.

Finally, we obtain the value $b'(\operatorname{APG}(X,\pi))$. Any anticanonical divisor $-K_X$ is linearly equivalent to $-K_{\mathbb{F}_\delta}^*-E_{1\,1}^*-E_{1\,2}^*-E_{2\,1}^*-E_{2\,2}^*-E_{2\,3}^*-E_{3\,2}^*$, $K_{\mathbb{F}_\delta}$ being a canonical divisor of $\mathbb{F}_\delta$. By the proof of Proposition \ref{prop:K_Xposi_inter} and Remark \ref{GetBp}, the value $b'(\operatorname{APG}(X,\pi))$ is the smallest positive integer $\delta$ such that the intersection product of $-K_X$ with any divisor in $\mathcal{G}$ is positive. Then, it is enough to force the positivity of the following values:
$$
W_{(1\,2),(2\,2)}\cdot(-K_X)=2\delta \text{ and } W_{(1\,2),(2\,3)}\cdot(-K_X)=3\delta.
$$
Therefore, $b'(\operatorname{APG}(X,\pi))=1$ and $b(\operatorname{APG}(X,\pi))=\max\{2,1\}=2$. Thus, if $\delta\geq 2$, $X$ is a Mori dream space.

We complement our example with a brief comment on the case when we regard $\mathbb{F}_\delta$ as a toric variety. Assume that the fibers $F_1$ and $F_2$ and the proper points $p_{11}$ and $p_{21}$ are fixed by the torus action. Then, blowing-up at these two points, the obtained surface has an induced toric structure. Assume also that all the remaining blown up points are invariant by the induced torus action. Then, independently of the value of $\delta$, $X$ is a toric variety and, hence, the cone of curves ${\rm NE}(X)$ is finitely generated by the classes of divisors described in \cite[Theorem 6.3.20]{CoxLiS}. When $\delta\geq a({\rm APG}(X,\pi))=2$, these classes are those given in 1(a) of Theorem \ref{Thm:Cone_equiv_conds}; when $\delta\in \{0,1\}$, we must include the class of the strict transform of the linearly equivalent to $M$ section that passes through $p_{11}$. Moreover, $X$ is a Mori dream space independently of the value of $\delta$ \cite{Cox}.
\end{example}

\begin{example}\label{example_2}
Let $\pi:X\to\mathbb{F}_\delta$ be the sequence of blowups over a Hirzebruch surface $\mathbb{F}_\delta$ considered in Examples \ref{ex:grafo_proximidad} and \ref{example_1}. We showed the arrowed proxi\-mity graph $\operatorname{APG}(X,\pi)$ in Figure \ref{fig_conf_F_delta}. As we explained in Example \ref{example_1}, by using a computer we are able to obtain a set $\mathcal{G}$ of generators of the cone $\mathfrak{C}^\vee(X)$, which have the form $D$ described in Proposition \ref{thm:nonnegative_selfinters}. Then, the formulae given in Remarks \ref{GetA} and \ref{GetBp} allow us to get the values $a(\operatorname{APG}(X,\pi))$ and $b'(\operatorname{APG}(X,\pi))$. These computations show that the values $a(\operatorname{APG}(X,\pi))$ and $b(\operatorname{APG}(X,\pi))$ introduced in Theorem \ref{Thm:Cone_equiv_conds} equal one. Therefore, by Theorem \ref{Thm:Cone_equiv_conds},  if $\delta\geq 1,$ $\NE(X)$ is generated  by the set $\{[\tilde{F}_1],[\tilde{F}_2],[\tilde{F}_3],[\tilde{M}_0],[E_1],\ldots,$ $[E_{20}]\}$ and, moreover, $X$ is a Mori dream space. The classes of the divisors computed in Example \ref{example_1} are some of the ge\-ne\-rators of the nef cone $\operatorname{Nef}(X)$. As we said in Example \ref{example_1}, if $\delta \geq 1$, $\mathcal{G}$ is a set of generators of $\operatorname{Nef}(X)$.

Furthermore, by Corollary \ref{cor:cono_P2}, if $X$ is the surface created by the sequence of blowups  $\pi':X\xrightarrow{\pi}\mathbb{F}_1\xrightarrow{\pi_{1}}\mathbb{P}^2$, then $X$ is a Mori dream space and $\NE(X)$ is generated by the classes of the divisors $\tilde{L}_1,\tilde{L}_2,\tilde{L}_3,E_1,\ldots,E_{21},$ where $L_i,1\leq i\leq 3,$ is the projective line on $\mathbb{P}^2$ whose strict transform on $\mathbb{F}_1$ can be regarded as the fiber $F_i$ on $\mathbb{F}_1$.

\end{example}

 \begin{figure}[h!]
        \centering
        \includegraphics[scale=0.75]{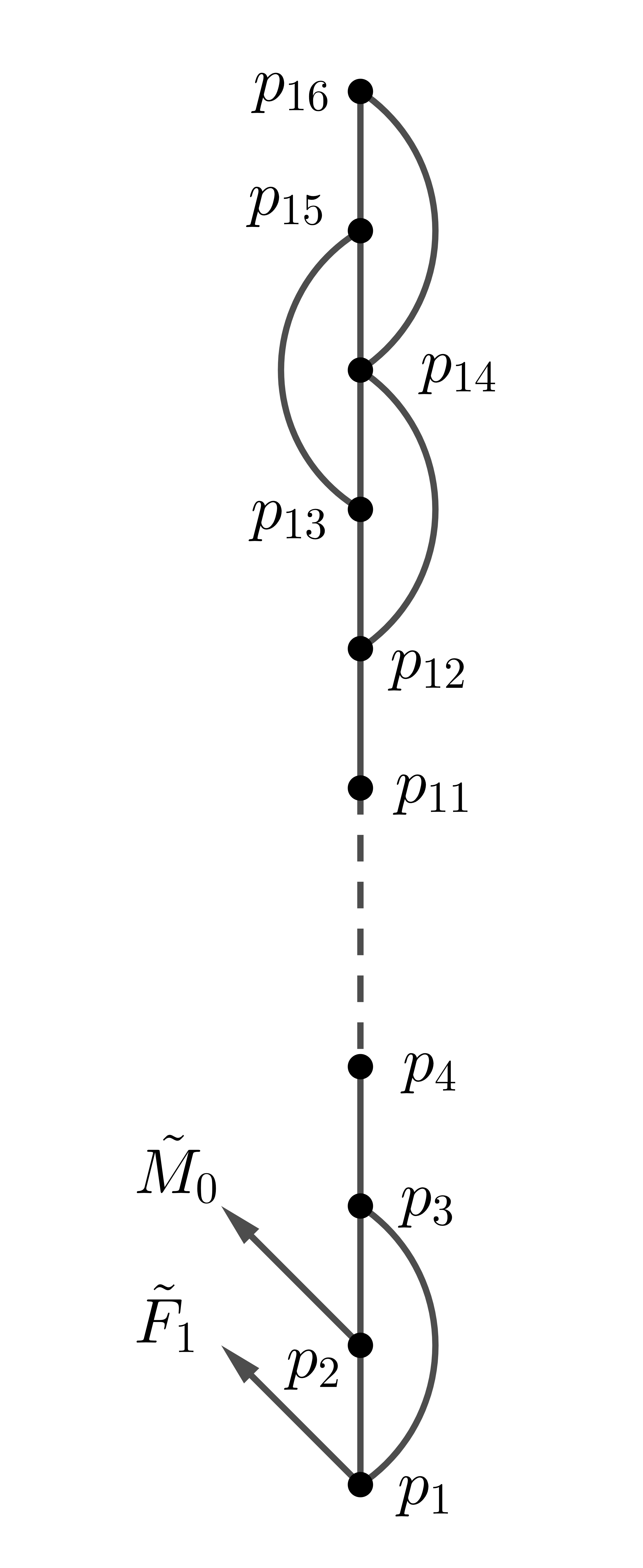}
        \caption{Arrowed proximity graph in Example \ref{ex:a_b_different}.}
        \label{fig:a_b_different}
\end{figure}

\begin{example}\label{ex:a_b_different}
Let $X$ be the rational surface obtained by blowing up a surface $\mathbb{F}_\delta$ at a chain $\mathcal{C}=\{p_l\}_{l=1}^{16}$ of sixteen infinitely near points which corresponds to a unique valuation $\nu_1$. Set $\pi:X\rightarrow \mathbb{F}_\delta$ the composition of blowups given by $\mathcal{C}$. Figure \ref{fig:a_b_different} shows the arrowed proximity graph of the pair $(X,\pi)$. Here, the cone $\mathfrak{C}(X)$ is generated by the set of classes $\{[\tilde{F}_1],[\tilde{M}_0],[E_1],\ldots,[E_{16}]\}$ and, by Lemma \ref{lema:Dual_cone_S},  its dual cone $\mathfrak{C}^\vee(X)$ by the set $\{[F^*],[M^*],$ $[\Lambda_{1}],\ldots,$ $[\Lambda_{16}]\}$, $\Lambda_{k}= \Lambda_{k}(\nu_1)$,  $1 \leq k \leq 16$.

The proof of Corollary \ref{cor:delta0} shows that, to compute the value $a(\operatorname{APG}(X,\pi))$ in Theorem \ref{Thm:Cone_equiv_conds}, it is enough to obtain the divisor $\Lambda_{16}$ and look after the value $\delta$ for which $\Lambda_{16}$ has nonnegative self-intersection.
The proximities equalities and the Noether formula prove that
$$
\begin{array}{c}
a_{16}=(\varphi_{M_0},\varphi_{16})_{p_1}=15, \ \ b_{16}=(\varphi_{F_1},\varphi_{16})_{p_1}=10 \text{ and} \\[2mm]
(\mult_{p_{\lambda}}(\varphi_{16}))_{\lambda=1}^{16}=(10,5,5,5,5,5,5,5,5,5,5,5,3,2,1,1).
\end{array}
$$
Therefore, one gets that
$$
\Lambda_{16}^2=2\,a_{16}\,b_{16} + b_{16}^2\,\delta - \sum_{\lambda=1}^{16}\mult_{p_{\lambda}}(\varphi_{16})^2 = 300 + 100\delta - 390,
$$
and then $a(\operatorname{APG}(X,\pi))$ is the least positive integer $\delta$ satisfying $\Lambda_{16}^2 >0$. Therefore, $a(\operatorname{APG}(X,\pi))=1$.

Now we compute the value $b(\operatorname{APG}(X,\pi))$ introduced in Theorem \ref{Thm:Cone_equiv_conds}.  As before, by Corollary \ref{cor:value_b}, we only need to know the value of $\delta$ guaranteeing that the product $\Lambda_{16}\cdot (-K_X)$ is positive, $-K_X$ being an anticanonical divisor  of the surface $X$. The divisor $-K_X$ is linearly equivalent to the divisor $-K_{\mathbb{F}_\delta}^*-\sum_{\lambda=1}^{16}E_\lambda^*$.  Then,
$$
\Lambda_{16}\cdot (-K_X)=2\,(a_{16}+b_{16}) + b_{16}\delta - \sum_{\lambda=1}^{16}\mult_{p_{\lambda}}(\varphi_{16})= 50 + 10 \delta -72
$$
and thus $b(\operatorname{APG}(X,\pi))=3$.

Therefore, when $\delta\geq 1,$ the cone of curves $\operatorname{NE}(X)$ and the nef cone $\operatorname{Nef}(X)$ of $X$ satisfy the equalities $\operatorname{NE}(X)=\mathfrak{C}(X)$ and $\operatorname{Nef}(X)=\mathfrak{C}^\vee(X)$, while for $\delta\geq 3$, $X$ is a Mori dream space.
\end{example}

\section*{Acknowledgements}
The authors would like to thank the anonymous reviewers for their helpful comments and feedback, which  improved this manuscript.

The third author would like to thank the Department of Algebra, Analysis, Geometry and Topology, and IMUVa of Valladolid University for the support received when preparing this article.

\bibliographystyle{plain}
\bibliography{BIBLIO_paquete1}

\end{document}